\title{Topological full groups of minimal subshifts 
and quantifying local embeddings into finite groups}
\author{Henry Bradford and Daniele Dona}
\newtheorem{thm}{Theorem}[section]
\newtheorem{lem}[thm]{Lemma}
\newtheorem{propn}[thm]{Proposition}
\newtheorem{coroll}[thm]{Corollary}
\newtheorem{defn}[thm]{Definition}
\newtheorem{ex}[thm]{Example}
\newtheorem{notn}[thm]{Notation}
\newtheorem{rmrk}[thm]{Remark}
\DeclareMathOperator{\Alt}{Alt}
\DeclareMathOperator{\Cyl}{Cyl}
\DeclareMathOperator{\DCyl}{DCyl}
\DeclareMathOperator{\GL}{GL}
\DeclareMathOperator{\Homeo}{Homeo}
\DeclareMathOperator{\im}{im}
\DeclareMathOperator{\Schr}{Schr}
\DeclareMathOperator{\supp}{supp}
\DeclareMathOperator{\Sym}{Sym}
\begin{document}

\maketitle

\begin{abstract}
We investigate quantitative aspects of the LEF property 
for subgroups of the topological full group $\lBrack \sigma \rBrack$ 
of a two-sided minimal subshift over a finite alphabet, 
measured via the LEF growth function. 
We show that the LEF growth of $\lBrack \sigma \rBrack^{\prime}$ 
may be bounded from above and below in terms of the 
recurrence function and the complexity function of the subshift, respectively. 
As an application, we construct groups of previously unseen 
LEF growth types, 
and exhibit a continuum of finitely generated LEF groups 
which may be distinguished from one another by their LEF growth. 
\end{abstract}

\section{Introduction}

Often in geometric group theory, 
one considers a \emph{growth function} $F_{\Gamma}$, 
which describes some part of the asymptotic structure 
of a finitely generated group $\Gamma$. 
Examples include \emph{subgroup growth}; 
\emph{word growth}; \emph{conjugacy growth}; \emph{Dehn function}; 
\emph{F\o lner function}; \emph{residual finiteness growth}. 
Having introduced $F_{\Gamma}$, 
it is always natural to attempt the following: 
(i) estimate $F_{\Gamma}$ for some group $\Gamma$ of interest; 
(ii) relate the behaviour of $F_{\Gamma}$ 
to structural features of $\Gamma$ or its actions; 
(iii) explore the types of functions which can arise 
as $F_{\Gamma}$ for some $\Gamma$. 
In this paper we make contributions to all three of these themes 
for the \emph{LEF growth} function, 
by examining some groups arising in Cantor dynamics. 

\subsection{Statement of results}

A group $\Gamma$ is \emph{LEF} 
(locally embeddable into finite groups) if every finite subset of $\Gamma$ 
admits an injective partial homomorphism (a local embedding) 
into a finite group. 
In other words, every finite subset of the multiplication table 
of $\Gamma$ occurs in the multiplication table of some finite group. 
If $\Gamma$ is finitely generated by $S$, 
the prototypical finite subsets are the balls $B_S(n)$ 
in the associated word metric. The \emph{LEF growth function} 
$\mathcal{L}_{\Gamma} ^S$ sends $n\in\mathbb{N}$
 to the minimal order of a finite group 
into which $B_S(n)$ locally embeds. 
The dependence of the function on $S$ is slight, 
so we suppress $S$ from our notation for the rest of the Introduction. 

That the topological full group $\lBrack \varphi \rBrack$ 
of a Cantor minimal system $(X,\varphi)$ is a LEF group 
was proved by Grigorchuk and Medynets \cite{GrigMedy}. 
We prove an effective version of their result, 
in the case of a two-sided minimal subshift $(X,\sigma)$ 
over a finite alphabet. 
Let $R_X : \mathbb{N} \rightarrow \mathbb{N}$ be 
the \emph{recurrence function} of $(X,\sigma)$ 
(see Definition \ref{RecurrenceDefn} below). 

\begin{thm} \label{UBmainthm}
Let $\Gamma$ be a finitely generated subgroup of $\lBrack \sigma \rBrack$. 
Then: 
\begin{equation}
\mathcal{L}_{\Gamma} (n) \preceq \big( 2 R_X(n) \big)!\text{.}
\end{equation}
In particular this inequality holds for 
$\Gamma = \lBrack \sigma \rBrack^{\prime}$. 
\end{thm}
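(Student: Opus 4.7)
My approach is to give an explicit version of the Grigorchuk-Medynets LEF embedding for Cantor minimal systems \cite{GrigMedy}, with the size of the approximating finite group quantified by the recurrence function. Fix a finite symmetric generating set $S$ of $\Gamma$. Since each generator $s$ acts as $s(x) = \sigma^{n_s(x)}(x)$ with $n_s$ locally constant of finite image, a straightforward induction on word length shows that every $g \in B_S(n)$ satisfies $g(x) = \sigma^{n_g(x)}(x)$ with $n_g$ depending only on a window of radius $\ell(n) = O(n)$ and $|n_g| \leq \ell(n)$. Pick a factor $w$ of $X$ of length $L = 2R_X(2\ell(n)+1)$ and a point $y \in X$ realizing $y_{[1,L]} = w$; by the definition of $R_X$, the interior subword $w_{\ell(n)+1} \cdots w_{L-\ell(n)}$, which has length at least $R_X(2\ell(n)+1)$, contains every length-$(2\ell(n)+1)$ factor of $X$. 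The target finite group will be $\Sym(\{1, \ldots, L\})$, of order $L! = (2R_X(2\ell(n)+1))!$.

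For each $g \in B_S(n)$, I would define $\pi_g \in \Sym(\{1, \ldots, L\})$ as follows. On the interior $I := [\ell(n)+1, L - \ell(n)]$, set $\pi_g(i) = i + n_g(\sigma^i y)$; this is well-defined because the window of $\sigma^i y$ of radius $\ell(n)$ lies inside $[1,L]$ and hence is completely determined by $w$, and it takes values in $\{1, \ldots, L\}$ since $|n_g| \leq \ell(n)$. The restriction $\pi_g|_I$ is injective, because $\pi_g(i) = \pi_g(j)$ would force $g(\sigma^i y) = g(\sigma^j y)$ and hence $i = j$ by aperiodicity of the minimal subshift (the finite-orbit case is trivial). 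Then extend $\pi_g$ to a bijection of $\{1, \ldots, L\}$ in some canonical way. Injectivity of $g \mapsto \pi_g$ on $B_S(n)$ is immediate: if $g \neq h$, then $n_g$ and $n_h$ disagree on some length-$(2\ell(n)+1)$ factor of $X$, which by the recurrence property occurs at an interior position of $w$, so $\pi_g$ and $\pi_h$ differ there. The cocycle identity $n_{gh}(x) = n_g(h(x)) + n_h(x)$, together with the faithful encoding of $X$-windows inside $w$, then yields $\pi_{gh}(i) = \pi_g(\pi_h(i))$ on the "doubly deep" subset of $I$ consisting of those $i$ whose image $\pi_h(i)$ also lies in $I$.

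The delicate step, which I expect to be the main obstacle, is making the boundary extensions of the $\pi_g$ compatible so that the local homomorphism relation $\pi_{gh} = \pi_g \pi_h$ holds as permutations of $\{1, \ldots, L\}$ rather than merely on the doubly deep interior. I anticipate handling this by reformulating the construction in terms of a Kakutani-Rokhlin tower over a marker word $u$ of length $\Theta(\ell(n))$, chosen long enough that the tower levels refine the support partitions of every $g \in B_S(n)$: the total count of tower levels is $O(R_X(\ell(n)))$ by the recurrence hypothesis, each $g \in B_S(n)$ literally permutes them once the tower height exceeds the shift magnitude $\ell(n)$, and no ad hoc boundary extension is then needed. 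Counting yields a finite group of order at most $(2R_X(\ell(n)))!$, and absorbing the linear rescaling $\ell(n) = O(n)$ into the asymptotic relation gives $\mathcal{L}_\Gamma(n) \preceq (2R_X(n))!$, with the extension to $\Gamma = \lBrack \sigma \rBrack^{\prime}$ following from the fact that commutator subgroups of finitely generated subgroups of $\lBrack \sigma \rBrack$ are (themselves finitely generated and) subject to the same embedding argument.
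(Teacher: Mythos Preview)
Your initial setup---the linear control $|n_g| \leq \ell(n) = O(n)$ and the idea of acting on a length-$O(R_X(\ell(n)))$ window of a fixed orbit---matches the paper's, and you are right that the boundary extension is the crux. But your proposed Kakutani--Rokhlin fix has two gaps. First, the atom count: the atoms of a KR tower over a cylinder $[u]$ are indexed by pairs (return word $w_i$, position $0 \leq j < |w_i|$), so their total number is $\sum_i |w_i|$, not $\max_i |w_i|$; this sum is governed by the complexity function rather than by $R_X$, and for a general minimal subshift can be far larger than $R_X(|u|)$. Second, the permutation claim fails near the column ends: if $g$ shifts an atom $\sigma^j(C_i)$ by $k$ and $j+k \notin [0,h_i)$, then $g$ sends that atom to $\sigma^{j+k-h_i}(\sigma^{h_i} C_i)$, which is a \emph{union} of atoms unless the first-return map on the base happens to permute the columns $C_i$---and in general it does not. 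So the boundary problem has not disappeared; it has merely moved to the top and bottom of each column.

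The paper resolves the boundary issue by a different, tower-free device. One chooses the window length $M$ not just large enough for injectivity, but so that $\mathbf{x}$ and $\sigma^M \mathbf{x}$ lie in the same $C_1(2r{+}1)$-cylinder; such an $M$ with $M \leq 2 R_X(O(r))$ exists simply by recurrence of that one cylinder. This endpoint-matching makes the generator maps $n \mapsto n + n_{s}(\sigma^n \mathbf{x})$ descend consistently to permutations of $\mathbb{Z}/M\mathbb{Z}$, and the local-embedding property is then deduced via a Schreier-graph local-colour-isomorphism criterion (Lemma~\ref{permisolem}) rather than by checking the cocycle identity directly. (As a minor aside, the ``in particular'' for $\lBrack\sigma\rBrack^{\prime}$ needs no separate argument about commutator subgroups: Matui's theorem says $\lBrack\sigma\rBrack^{\prime}$ is itself finitely generated, so it is simply an instance of the general statement.)
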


For nondecreasing unbounded functions 
$F_1$ and $F_2$ we write $F_1 \preceq F_2$ if, 
up to constant rescaling of the argument, $F_2$ bounds $F_1$ from above 
(see Definition \ref{FunctionCompDefn} below). 
Note that the Cantor minimal system 
$(X,\varphi)$ being isomorphic to a minimal subshift 
is a necessary and sufficient condition for $\lBrack \varphi \rBrack^{\prime}$ 
to be finitely generated \cite{Mat}. 
In the other direction, we have the following lower bound 
on the LEF growth of $\lBrack \sigma \rBrack^{\prime}$. 
Let $p_X : \mathbb{N} \rightarrow \mathbb{N}$ be the 
\emph{complexity function} of the minimal subshift $(X,\sigma)$ 
(see Definition \ref{ComplexityDefn} below). 

\begin{thm} \label{LBmainthm}
There exists $c>0$ such that $\exp \big(c p_X (n^{1/2} ) \big) 
\preceq \mathcal{L}_{\lBrack \sigma \rBrack^{\prime}} (n)$. 
\end{thm}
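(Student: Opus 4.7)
The plan is to exhibit, for a suitable $m \asymp n^{1/2}$, a subgroup $H \leq \lBrack\sigma\rBrack'$ of order $2^{p_X(m)}$ all of whose elements lie inside $B_S(n)$. Any injective partial homomorphism $\iota : B_S(n) \to G$ restricts to a genuine group embedding of $H$ into $G$, forcing $|G| \geq 2^{p_X(m)}$, which delivers the desired bound $\mathcal{L}_{\lBrack\sigma\rBrack'}(n) \geq \exp(c\,p_X(n^{1/2}))$ for appropriate $c > 0$, up to the constant rescaling of the argument allowed by $\preceq$.

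To construct $H$, I would associate to each admissible word $w$ of length $m$ an involution $\alpha_w \in \lBrack\sigma\rBrack'$ supported entirely on the cylinder $[w] \subseteq X$. A concrete choice is the involution swapping two disjoint subcylinders $[wa]$ and $[wb]$ of $[w]$ by an appropriate power of $\sigma$; minimality of $(X,\sigma)$ guarantees that, once $m$ is large enough, every length-$m$ admissible word admits at least two distinct right extensions, so the construction is unobstructed. Since length-$m$ cylinders associated to distinct words are pairwise disjoint, the $\alpha_w$ commute pairwise, and hence $H := \langle \alpha_w \rangle_w \cong (\mathbb{Z}/2\mathbb{Z})^{p_X(m)}$.

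The crux is establishing a word-length bound $|g|_S \leq Cm^2$ for every $g \in H$, with $C$ depending only on the fixed generating set of $\lBrack\sigma\rBrack'$. A generic $g = \prod_{w \in W'} \alpha_w \in H$ involves a potentially large subset $W'$, yet $g$ itself is still a single element of $\lBrack\sigma\rBrack'$ whose continuous cocycle $n_g : X \to \mathbb{Z}$ depends on only $m + O(1)$ consecutive symbols and takes a uniformly bounded number of distinct values. Decomposing $g$ along the level sets of its cocycle expresses it as a short product of elements of the form $\sigma^k$ restricted to a union of length-$m$ cylinders, each of which can be realised using $O(m)$ generators by exploiting locality of the cocycle rather than multiplying the $\alpha_w$ one by one. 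This improves decisively upon the naive triangle-inequality bound $|g|_S \leq Cm|W'|$, which would be insufficient whenever $p_X$ grows faster than linearly.

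With this estimate in hand, choosing $m = \lfloor (n/C)^{1/2} \rfloor$ places all of $H$ inside $B_S(n)$ and yields the theorem. The principal obstacle is the word-length bound itself: its proof will require a canonical decomposition of scale-$m$ elements of $\lBrack\sigma\rBrack'$ that exploits the locality of their cocycles, made quantitative with respect to a concrete finite generating set such as that furnished by \cite{Mat}, and this will be substantially more delicate than the essentially combinatorial construction of the $\alpha_w$ themselves.
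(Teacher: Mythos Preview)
Your proposed word-length bound $|g|_S \leq Cm^2$ for \emph{every} $g \in H \cong (\mathbb{Z}/2\mathbb{Z})^{p_X(m)}$ is impossible in general, by a simple counting argument: the ball $B_S(Cm^2)$ has at most $\lvert S\rvert^{Cm^2} = \exp(O(m^2))$ elements, whereas $\lvert H\rvert = 2^{p_X(m)}$. So as soon as $p_X(m)$ grows faster than quadratically --- e.g.\ for any positive-entropy subshift, or for the intermediate subshifts of Theorem~\ref{IntGrowthShiftThm} --- $H$ cannot fit inside $B_S(Cm^2)$. Your sketched ``decomposition along level sets of the cocycle'' cannot rescue this: the level set $f_g^{-1}(k)$ is an \emph{arbitrary} union of $m$-cylinders, and there is no reason the restriction of $\sigma^k$ to such a union should have word length independent of how many cylinders are involved. (Separately, your construction of $\alpha_w$ as an involution swapping $[wa]$ and $[wb]$ by a power of $\sigma$ is not well-defined, since $\sigma^k([wa]) = [wb]$ essentially never holds; but this is minor compared to the counting obstruction.)

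The paper's proof avoids exactly this trap. Rather than placing an entire large subgroup inside $B_S(n)$, it places only many small \emph{centreless} subgroups $\Delta_U \cong \Alt(5)$, one per cylinder $U$ in a well-separated family $\DCyl_X(m)$ of size $\asymp p_X(m)$, each with $\Delta_U \subseteq B_S(Cm^2)$ (Proposition~\ref{gen3-cycprop} gives the word-length bound for the individual $3$-cycles $f_U$). The $\Delta_U$ have disjoint supports and generate their direct product in $\lBrack\sigma\rBrack'$, but that direct product is \emph{not} required to lie in any ball. Instead, Proposition~\ref{dirprodLEFprop} shows that a local embedding of $B_S(2n)$ must send pairwise commuting centreless subgroups to subgroups that still generate their direct product --- this is where centrelessness is essential, and why your choice of $\mathbb{Z}/2\mathbb{Z}$ would fail even at this step. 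Hence $\lvert Q\rvert \geq 60^{\lvert \DCyl_X(m)\rvert} \geq \exp(c\,p_X(m))$.
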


Our proof yields $c=\log(60)/9 \approx 0.455$, 
but small modifications to the argument would enable us to 
make $c$ arbitrarily large. 
The group $\lBrack \sigma \rBrack^{\prime}$ always has exponential 
word growth, which immediately implies that 
$\mathcal{L}_{\lBrack \sigma \rBrack^{\prime}} $ 
grows at least exponentially. 
One consequence of Theorem \ref{UBmainthm} is that 
when the subshift $X$ is extremely ``orderly'', 
then this exponential lower bound is close to best-possible. 

\begin{ex}
Let $X$ be a linearly recurrent subshift. Then: 
\begin{equation}
\exp (n) \preceq \mathcal{L}_{\lBrack \sigma \rBrack^{\prime}}(n) \leq n!\text{.}
\end{equation}
\end{ex}

On the other hand, is $X$ is highly non-deterministic, 
then Theorem \ref{LBmainthm} gives a novel lower bound for 
the LEF growth of $\lBrack \sigma \rBrack^{\prime}$. 

\begin{ex}
Let $X$ be a subshift of positive entropy. Then: 
\begin{equation}
\exp(\exp(n^{1/2})) \preceq 
\mathcal{L}_{\lBrack \sigma \rBrack^{\prime}} (n)\text{.}
\end{equation}
\end{ex}

See Subsection \ref{WordsSubsect} for definitions 
of linear recurrence and entropy. 
Although the upper and lower bounds proved in Theorems \ref{UBmainthm} 
and \ref{LBmainthm} are some distance apart, 
they are powerful enough to allow us to observe 
new phenomena in the kinds of functions which can 
arise as the LEF growth functions of groups. 

\begin{thm} \label{IntGrowthmainthm}
For any $r \in [ 2,\infty )$, 
there exists a finitely generated LEF group $\Gamma^{(r)}$ 
such that: 
\begin{itemize}
\item[(i)] There exists $C_r > 0$ such that $\mathcal{L}_{\Gamma^{(r)}} (n) 
\preceq \exp \Big( \exp \big(C_r (\log n)^r \big) \Big)$; 
\item[(ii)] For any $2 \leq r^{\prime} < r$, and for all $C>0$, 
$\mathcal{L}_{\Gamma^{(r)}} (n) 
\npreceq \exp \Big( \exp \big( C(\log n)^{r^{\prime}} \big) \Big)$. 
\end{itemize} 
\end{thm}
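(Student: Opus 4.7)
The plan is to realise $\Gamma^{(r)} := \lBrack \sigma^{(r)} \rBrack^{\prime}$ for a suitable minimal two-sided subshift $(X^{(r)}, \sigma^{(r)})$ over a finite alphabet, so that (i) and (ii) follow respectively from Theorems \ref{UBmainthm} and \ref{LBmainthm}; finite generation of $\Gamma^{(r)}$ is then automatic from Matui's theorem cited above. The central task is to produce, for each $r \in [2,\infty)$, a minimal subshift whose complexity and recurrence functions are simultaneously calibrated at the scale $\exp\big((\log n)^r\big)$.

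Specifically, I would seek $X^{(r)}$ satisfying
\begin{equation*}
R_{X^{(r)}}(n) \leq K_r \exp \big( (\log n)^r \big) \text{ for all large } n, \qquad p_{X^{(r)}}(n) \geq \exp \big( (\log n)^r \big) \text{ for infinitely many } n.
\end{equation*}
Given such a subshift, (i) follows from Theorem \ref{UBmainthm} combined with Stirling: $\log \big( (2R(n))! \big) \leq 2R(n) \log (2R(n))$, and the bound on $R_{X^{(r)}}$ makes this at most $\exp(C_r (\log n)^r)$ for $n$ large, because the extra factor $\log R(n) \leq (\log n)^r$ is absorbed into the exponential once $r \geq 2$. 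For (ii), Theorem \ref{LBmainthm} gives $\log \mathcal{L}_{\Gamma^{(r)}}(n) \succeq c \cdot p_{X^{(r)}}(n^{1/2})$; a hypothetical bound $\mathcal{L}_{\Gamma^{(r)}} \preceq \exp \exp \big( C (\log n)^{r^{\prime}} \big)$ would, via the substitution $m = n^{1/2}$ and absorption of the constant rescaling of the argument (which for $r^{\prime} \geq 2$ only alters constants), translate into $p_{X^{(r)}}(m) \leq \exp \big( C^{\prime\prime} (\log m)^{r^{\prime}} \big)$ for every sufficiently large $m$, contradicting the infinitely-often lower bound on $p_{X^{(r)}}$ since $(\log m)^r$ dominates any constant multiple of $(\log m)^{r^{\prime}}$ as $m \to \infty$.

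The main obstacle is the subshift construction itself: generic minimal subshifts have $R_X$ very much larger than $p_X$, so one needs a family of constructions that forces the recurrence to stay close to the complexity. Toeplitz subshifts built from a period sequence $(h_k)$ are a natural candidate, since both $p_X$ and $R_X$ can be read off from $(h_k)$ and the ``filling densities'' at each level, and by tuning $(h_k)$ one can simultaneously place both functions on the prescribed scale; alternatively, an S-adic or substitutive construction whose block lengths grow like $\exp((\log n)^r)$ should work equally well. The continuous dependence of the free parameter on $r$ then delivers the continuum $\{\Gamma^{(r)} : r \in [2,\infty)\}$ of finitely generated LEF groups, pairwise distinguishable by their LEF growth.
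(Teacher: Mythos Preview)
Your overall architecture is exactly that of the paper: set $\Gamma^{(r)}=\lBrack\sigma\rBrack'$ for a minimal subshift $X_r$ whose recurrence and complexity are both pinned at scale $\exp\big((\log n)^r\big)$, then feed Theorem~\ref{UBmainthm} through Stirling for~(i) and contradict Theorem~\ref{LBmainthm} for~(ii). Your deductions of (i) and (ii) from the two inequalities on $R_{X_r}$ and $p_{X_r}$ are correct and essentially identical to the paper's.

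The only substantive gap is the one you flag yourself: you do not actually build the subshift. The paper does not use Toeplitz or a generic S-adic scheme; it gives an explicit hierarchical construction (Theorem~\ref{IntGrowthShiftThm}, adapted from Jung--Lee--Park) in which $\mathcal{C}_{j+1}$ is formed by concatenating all words of $\mathcal{C}_j$ and then permuting the middle third, with $|\mathcal{C}_{j+1}|$ chosen alternately as $N_j^2$ and roughly $\exp\big((\log N_j)^r\big)$. The common prefix/suffix structure forces the recurrence bound $R_{\mathbf{x}}(n)\leq 3l_{j+1}$ on each scale interval, while the explicit cardinality of $\mathcal{C}_j$ gives the complexity lower bound $p_{\mathbf{x}}(l_j)\geq N_j$ along a subsequence. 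The delicate point---and the reason a hand-wave at Toeplitz is not enough---is that the two requirements pull against each other: large complexity wants many distinct blocks at each level, but controlled recurrence wants every block to reappear soon, and a generic Toeplitz sequence with period structure $(h_k)$ does not obviously give $R_X\preceq p_X^{O(1)}$ at the prescribed scale. The paper's device of fixing the first and last thirds of every $u_i^{(j)}$ is precisely what decouples these, guaranteeing that any factor of length $<\tfrac{2}{3}l_j$ lives inside $u_i^{(j)}w^{(j)}$ or $w'^{(j)}u_i^{(j)}$ and hence recurs within $3l_{j+1}$. Your Toeplitz or S-adic suggestion may well be made to work, but it would require an analogous mechanism and a genuine argument, not just parameter-tuning.
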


Previously, the only functions known to arise as LEF growth functions 
of groups were polynomial, exponential, or comparable to 
$\exp(n^C)$ or $\exp(\exp(n))$ (see \cite[Theorem 1.7]{Brad}) 
and some inexplicit, very large functions 
(see \cite[\S 5]{Brad}). 
Theorem \ref{IntGrowthmainthm} also immediately implies that 
uncountably many inequivalent LEF growth functions occur 
among finitely generated LEF groups, 
answering a question posed in \cite{Brad}. 

\begin{thm} \label{UnctbleThm}
There is an uncountable family $\mathcal{F}$ of 
pairwise nonisomorphic finitely generated LEF groups such that, 
for $\Gamma_1 , \Gamma_2 \in \mathcal{F}$, 
if $\Gamma_1 \neq \Gamma_2$ then $\mathcal{L}_{\Gamma_1} \napprox \mathcal{L}_{\Gamma_2}$.
\end{thm}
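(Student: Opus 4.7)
The plan is to deduce this directly from Theorem \ref{IntGrowthmainthm} by considering the family
\begin{equation}
\mathcal{F} = \{ \Gamma^{(r)} : r \in [2, \infty) \},
\end{equation}
which is parameterized by an uncountable set of finitely generated LEF groups. The task reduces to showing that for $r_1 \neq r_2$ in $[2,\infty)$, the LEF growth functions $\mathcal{L}_{\Gamma^{(r_1)}}$ and $\mathcal{L}_{\Gamma^{(r_2)}}$ are $\napprox$-inequivalent; once this is established, the invariance of $\mathcal{L}_{\Gamma}$ up to $\approx$ under change of generating set automatically yields pairwise non-isomorphism (so that $\mathcal{F}$ is uncountable as a set of isomorphism classes, not merely as a parameterized family).

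Without loss of generality take $2 \leq r_1 < r_2$. First I would apply part (i) of Theorem \ref{IntGrowthmainthm} to $\Gamma^{(r_1)}$ to obtain a constant $C_{r_1} > 0$ with
\begin{equation}
\mathcal{L}_{\Gamma^{(r_1)}}(n) \preceq \exp\Big(\exp\big( C_{r_1} (\log n)^{r_1} \big)\Big).
\end{equation}
Next I would apply part (ii) of the same theorem to $\Gamma^{(r_2)}$, using the admissible intermediate value $r' = r_1 < r_2$, to conclude that $\mathcal{L}_{\Gamma^{(r_2)}}(n) \npreceq \exp\big(\exp( C (\log n)^{r_1} )\big)$ for every $C > 0$. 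Combining these, if $\mathcal{L}_{\Gamma^{(r_2)}} \preceq \mathcal{L}_{\Gamma^{(r_1)}}$ held, transitivity of $\preceq$ would furnish a constant $C$ contradicting part (ii). Therefore $\mathcal{L}_{\Gamma^{(r_2)}} \npreceq \mathcal{L}_{\Gamma^{(r_1)}}$, and in particular $\mathcal{L}_{\Gamma^{(r_1)}} \napprox \mathcal{L}_{\Gamma^{(r_2)}}$.

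There is no real obstacle beyond the work already encoded in Theorem \ref{IntGrowthmainthm}: the entire content of Theorem \ref{UnctbleThm} lies in noting that the $\Gamma^{(r)}$ are indexed by an uncountable set and that the separation afforded by part (ii) is strong enough to distinguish pairs of groups \emph{within} the family, rather than merely separating each group from a fixed external scale of comparison functions. The only sanity check required is that equivalence of LEF growth under $\approx$ is an invariant of the abstract group, which follows from the standard observation that a change of finite generating set induces a bi-Lipschitz change of word metric, and hence only a linear rescaling of the argument of $\mathcal{L}_{\Gamma}$.
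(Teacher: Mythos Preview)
Your proposal is correct and follows essentially the same approach as the paper: take $\mathcal{F} = \{\Gamma^{(r)} : r \geq 2\}$, and for $r' < r$ combine Theorem \ref{IntGrowthmainthm} (i) applied to $\Gamma^{(r')}$ with Theorem \ref{IntGrowthmainthm} (ii) applied to $\Gamma^{(r)}$ to derive a contradiction from $\mathcal{L}_{\Gamma^{(r)}} \preceq \mathcal{L}_{\Gamma^{(r')}}$. The paper's proof is terser and omits your remarks about invariance under change of generating set and the resulting pairwise non-isomorphism, but the argument is the same.
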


We write $F_1 \approx F_2$ if $F_1 \preceq F_2$ 
and $F_2 \preceq F_1$ (see Definition \ref{FunctionCompDefn} below). 
The groups constructed in Theorem \ref{IntGrowthmainthm} 
have the form $\lBrack \sigma \rBrack^{\prime}$, 
for $(X_r,\sigma)$ a minimal subshift which we construct by adapting 
a construction of Jung, Lee and Park \cite{JLP16}, as follows. 

\begin{thm} \label{IntGrowthShiftThm}
For every $r \in [ 2,\infty )$, there are a minimal subshift 
$(X_r , \sigma)$, constants $C_r , c_r >0$, 
and an increasing sequence $(n_i ^{(r)})$ of integers such that: 
\begin{itemize}
\item[(i)] For all $n\geq 2$, 
$R_{X_r} (n) \leq \exp \big( C_r (\log n)^r \big)$; 
\item[(ii)] For all $i \in \mathbb{N}$, 
$p_{X_r} (n_i ^{(r)}) \geq \exp \big( c_r (\log n_i ^{(r)} )^r \big)$. 
\end{itemize}
\end{thm}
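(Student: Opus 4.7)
The plan is to adapt the Jung--Lee--Park construction \cite{JLP16} of minimal subshifts with prescribed complexity and recurrence, rescaling their parameters so that both $R_{X_r}$ and $p_{X_r}$ achieve the common rate $\exp((\log n)^r)$.

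First I would set up an $S$-adic (iterated concatenation) construction producing a sequence of words $(w_k)_{k\geq 0}$ with lengths $N_k := |w_k|$. At stage $k$, fix a pool $\mathcal{W}_k$ of $M_k$ distinct admissible words of length $N_k$; then form $w_{k+1}$ by concatenating words from $\mathcal{W}_k$ in a pattern which ensures that every element of $\mathcal{W}_k$ appears with bounded multiplicative gap, so that the limit subshift $X_r$ is minimal and $R_{X_r}(N_k) = O(N_{k+1})$. The crucial calibration is to choose $N_{k+1}$ of order $M_k \cdot N_k$ (enough room to fit all of $\mathcal{W}_k$ with bounded gap) and $\log M_k$ of order $(\log N_k)^r - \log N_k$; equivalently $\log N_{k+1}$ of order $(\log N_k)^r$, which is the rate that saturates both of the target inequalities.

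With this calibration, bound (i) is immediate: for $n \in [N_k, N_{k+1}]$,
\begin{equation*}
R_{X_r}(n) \leq R_{X_r}(N_{k+1}) = O(N_{k+2}) = \exp\bigl(O((\log N_{k+1})^r)\bigr) = \exp\bigl(O((\log n)^r)\bigr).
\end{equation*}
For bound (ii), setting $n_k^{(r)} := N_k$, every element of $\mathcal{W}_k$ occurs as a length-$N_k$ factor of $X_r$, so $p_{X_r}(n_k^{(r)}) \geq M_k \geq \exp(c_r (\log n_k^{(r)})^r)$ for a suitable $c_r > 0$ slightly smaller than $C_r^{-1}$.

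The main obstacle is constructing the pools $\mathcal{W}_k$ together with the stage-$(k+1)$ concatenation patterns so that $M_k$ genuinely distinct length-$N_k$ words all appear as factors of $X_r$ with bounded gap while minimality is preserved: since $M_k$ is superpolynomial in $N_k$, the concatenation patterns themselves must be carefully diversified at every scale, and one must simultaneously prevent unintended factors of length $\approx N_k$ from swamping $\mathcal{W}_k$ and spoiling the identification used for the lower bound. This is exactly the combinatorial core of the Jung--Lee--Park argument, and I would follow their inductive scheme, verifying that the recalibrated parameter regime above remains within the range of their construction with only cosmetic modifications to the bookkeeping.
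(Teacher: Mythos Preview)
Your overall plan—an iterated-concatenation construction following \cite{JLP16}, with pools $\mathcal{W}_k$ at each scale—is the right shape and matches the paper's approach, but your parameter calibration has a genuine gap that breaks bound (i). You set $\log N_{k+1}\asymp(\log N_k)^r$ at \emph{every} step, and in your displayed chain the final equality $\exp\bigl(O((\log N_{k+1})^r)\bigr)=\exp\bigl(O((\log n)^r)\bigr)$ fails at the bottom of the interval $[N_k,N_{k+1}]$: when $n\approx N_k$ you would need $\log N_{k+1}\leq C\log N_k$, whereas your calibration gives $\log N_{k+1}\asymp(\log N_k)^r\gg\log N_k$. Tracing through, your construction yields only $R_{X_r}(n)\leq\exp\bigl(C(\log n)^{r^2}\bigr)$, not the claimed exponent $r$. (Your worry about ``unintended factors swamping $\mathcal{W}_k$'' is, by contrast, a non-issue: extra factors can only increase $p_{X_r}$, so the lower bound (ii) survives regardless.)

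The paper repairs this by \emph{alternating} pool sizes: at even stages it takes $N_{j+1}\approx\exp((\log N_j)^r)$ (the big jump, for complexity), but at odd stages only $N_{j+1}=N_j^2$. The two-step growth is then $\log N_{j+2}\leq 2^r(\log N_j)^r$, and the length scales $l_j=l_{j-1}N_{j-1}$ are spaced closely enough on a logarithmic scale that the recurrence estimate $R_{\mathbf{x}}(n)\leq 3l_{j+1}$ for $n\in[\tfrac{2}{3}l_{j-1},\tfrac{2}{3}l_j)$ does translate into (i); the complexity lower bound (ii) is then verified only along the subsequence $n_i^{(r)}=l_{2i+1}$. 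A second structural ingredient you do not mention, and which you should not omit, is that all words in the $j$th pool share a common prefix $w^{(j)}$ and suffix $w'^{(j)}$ of length $l_j/3$: this is what ensures that any factor of length $<\tfrac{2}{3}l_j$ lies inside some $u_i^{(j)}w^{(j)}$ or $w'^{(j)}u_i^{(j)}$ rather than an arbitrary adjacent pair, and is precisely what makes the one-level recurrence bound $R_{\mathbf{x}}(n)\leq 3l_{j+1}$ hold. Your ``bounded multiplicative gap'' condition by itself does not obviously deliver this.
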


The conclusions (i) and (ii) of Theorem \ref{IntGrowthmainthm} 
for the group $\Gamma^{(r)} = \lBrack \sigma \rBrack^{\prime}$ then follow 
from Theorem \ref{IntGrowthShiftThm} (i) and (ii), 
by Theorems \ref{UBmainthm} and \ref{LBmainthm}, respectively. 
Further examples of subshifts of ``intermediate'' complexity 
could be a rich source of examples of new exotic behaviours 
in the LEF growth of groups, and this should be investigated further. 

\subsection{Background and structure of the paper}

The concept of a LEF group first appears in the work of Mal'cev, 
but was developed and popularised by Vershik and Gordon \cite{VersGord}. 
All residually finite groups are LEF, including all finitely generated 
nilpotent or linear groups, but LEF enjoys some closure properties 
that residual finiteness does not: for instance, 
the (regular restricted) wreath product of LEF groups is LEF. 
Among finitely presented groups, 
the classes of LEF and residually finite groups coincide, 
and this observation provides a useful tool for proving that 
certain groups are not finitely presentable 
(see \cite{GrigMedy} for a proof along these lines for 
derived subgroups of topological full groups). 
LEF groups have also been studied in connection with weaker approximation 
properties of groups, such as soficity and hyperlinearity, 
since they provide a source of examples beyond 
those arising from residual finiteness or amenability. 
For instance, Elek and Szab\'o \cite{ElekSzab} used the LEF property 
to construct the first examples of sofic groups 
which are not residually amenable. 

The LEF growth function was introduced independently in 
\cite{ArzChe} and \cite{BouRabStu} (in the latter under the name 
\emph{geometric full residual finiteness growth}), 
and fits into the extensive literature on quantifying finite approximations 
of infinite groups which has developed over the last decade. 
This program started with the work of Bou-Rabee 
and collaborators on quantitative residual finiteness
(see \cite{BoRaCheTim} and the references therein). 
Using results on quantitative residual finiteness, 
word growth, and finite presentability, 
the LEF growth function has been estimated 
for several natural classes of groups 
(see \cite[\S 2.4]{Brad}). 

\begin{ex} \label{SmallLEFGrowthEx}
Let $\Gamma$ be a finitely generated group. 
\begin{itemize}
\item[(i)] If $\Gamma$ is virtually $\mathbb{Z}^d$, 
then $\mathcal{L}_{\Gamma} (n) \approx n^d$; 
\item[(ii)] $\mathcal{L}_{\Gamma}$ is bounded above by a polynomial function 
iff $\Gamma$ is virtually nilpotent; 
\item[(iii)] If $\Gamma \leq \GL_d (\mathbb{Z})$ 
is finitely generated, not virtually nilpotent, 
then $\mathcal{L}_{\Gamma} (n) \approx \exp(n)$. 
\end{itemize}
\end{ex}

Groups of larger LEF growth can be explicitly constructed using wreath products. 

\begin{thm}[\cite{Brad} Theorem 1.8] \label{WPLEFGrthThm}
If $\Gamma$ is a finitely generated LEF group with word growth function 
$\gamma_{\Gamma}$, 
and $\Delta$ is a finite centreless group, 
then $\exp\big( \gamma_{\Gamma} (n) \big) \preceq \mathcal{L}_{\Delta \wr \Gamma} (n) \preceq \exp\big( \mathcal{L}_{\Gamma} (n) \big)$. 
\end{thm}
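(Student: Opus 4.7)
The plan is to prove the two inequalities separately. For the upper bound $\mathcal{L}_{\Delta \wr \Gamma}(n) \preceq \exp(\mathcal{L}_\Gamma(n))$, I use a push-forward construction. Let $\varphi \colon B_\Gamma(Kn) \to Q$ be a local embedding of $\Gamma$ into a finite group $Q$ of order $\mathcal{L}_\Gamma(Kn)$, where $K$ is an absolute constant to be fixed later. Any element $(f,g) \in B_{\Delta \wr \Gamma}(n)$ satisfies $\supp(f) \cup \{g\} \subseteq B_\Gamma(n)$, so I would define $\Psi \colon B_{\Delta \wr \Gamma}(n) \to \Delta \wr Q$ by $\Psi(f,g) := (\tilde f, \varphi(g))$, with $\tilde f(q) := f(\varphi^{-1}(q))$ for $q \in \varphi(B_\Gamma(n))$ and $\tilde f(q) := 1$ otherwise. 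Injectivity of $\Psi$ on $B_{\Delta \wr \Gamma}(n)$ follows from that of $\varphi$ on $B_\Gamma(n)$; multiplicativity reduces to $\varphi$ being a partial homomorphism on products and translates of length $\leq 2n$, which is guaranteed by taking $K \geq 3$. Since $|\Delta \wr Q| \leq |\Delta|^{|Q|+1}$, this yields $\mathcal{L}_{\Delta \wr \Gamma}(n) \preceq \exp(\mathcal{L}_\Gamma(n))$.

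For the lower bound I would choose $d_0, d' \in \Delta$ with $[d_0, d'] \neq 1$, which exists because $\Delta$ is non-trivial and centreless, hence non-abelian, and let $\Phi \colon B_{\Delta \wr \Gamma}(n) \to F$ be a local embedding with $|F| = \mathcal{L}_{\Delta \wr \Gamma}(n)$. Setting $m := \lfloor (n-4)/8 \rfloor$, for each subset $S \subseteq B_\Gamma(m)$ I would consider the element $\xi_S := \prod_{g \in S} \Phi(\delta_g d_0) \in F$, where $\delta_g d_0$ denotes the configuration in $\Delta^{(\Gamma)}$ supported at $g$ with value $d_0$ (so that $\delta_g d_0 = g\, d_0\, g^{-1}$ as a word of length $\leq 2m+1$). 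For $m$ this small, every element and four-term commutator arising has length $\leq n$ in $\Delta \wr \Gamma$, so the LEF property of $\Phi$ ensures the $\Phi(\delta_g d_0)$ pairwise commute in $F$ and $\xi_S$ is well-defined. To show $S \mapsto \xi_S$ is injective, pick $S \neq S'$ and $g^* \in S \triangle S'$; rewrite $\xi_S \xi_{S'}^{-1}$ as $\prod_{g \in S \triangle S'} \Phi(\delta_g \epsilon_g)$ with $\epsilon_g \in \{d_0, d_0^{-1}\}$, and then take the commutator with $\Phi(\delta_{g^*} d')$. Every factor with $g \neq g^*$ cancels by commutativity, leaving $[\Phi(\delta_{g^*}\epsilon_{g^*}), \Phi(\delta_{g^*} d')] = \Phi(\delta_{g^*}[\epsilon_{g^*}, d'])$, which is non-trivial since $[d_0^{\pm 1}, d'] \neq 1$ and $\Phi$ is injective. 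Hence $\xi_S \neq \xi_{S'}$, giving $|F| \geq 2^{\gamma_\Gamma(m)}$ and so $\exp(\gamma_\Gamma(n)) \preceq \mathcal{L}_{\Delta \wr \Gamma}(n)$.

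The main obstacle in both halves is careful bookkeeping of word lengths: every multiplicative or commutator identity invoked in the target finite group must correspond to an identity in $\Delta \wr \Gamma$ whose partial products all lie within $B(n)$, where the local embedding is multiplicative. Since every generator used has length $O(m)$, any four-term product or commutator has length $O(m)$, and with $m = \Theta(n)$ everything fits. Centrelessness of $\Delta$ enters critically, and only, in the lower bound, where it supplies the non-commuting pair $(d_0, d')$ that distinguishes the $2^{\gamma_\Gamma(m)}$ configurations via a single commutator.
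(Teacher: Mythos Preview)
The paper does not contain a proof of this statement: Theorem~\ref{WPLEFGrthThm} is quoted from \cite{Brad} and stated without argument, so there is no ``paper's own proof'' to compare against. That said, your proposal is essentially correct on both halves, and the upper bound via the push-forward $\Psi(f,g)=(\tilde f,\varphi(g))$ into $\Delta\wr Q$ is the standard construction; the bookkeeping you describe (taking $K\geq 3$ so that all products and translates needed lie in the domain of $\varphi$) is exactly what is required.

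For the lower bound, your subset-encoding argument via a single commutator works, but it is worth noting that the paper itself records (as Proposition~\ref{dirprodLEFprop}, explicitly credited to the same idea from \cite[Theorem~3.4]{Brad}) a cleaner abstraction of the mechanism. Rather than distinguishing $2^{\gamma_\Gamma(m)}$ configurations one pair at a time, one observes that the conjugates $\Delta_g := g\Delta g^{-1}\subseteq B_S(2m+1)$ for $g\in B_\Gamma(m)$ are finite centreless subgroups generating their direct product inside $\Delta\wr\Gamma$; Proposition~\ref{dirprodLEFprop} then shows directly that their images under any local embedding of $B_S(2(2m+1))$ still generate a direct product, giving $\lvert F\rvert\geq \lvert\Delta\rvert^{\gamma_\Gamma(m)}$. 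This buys you base $\lvert\Delta\rvert\geq 6>e$ (so no fuss about $2$ versus $e$), halves the radius needed (products of two group elements rather than four-term commutators), and isolates centrelessness as the sole structural input in a single reusable lemma. Your commutator computation is in effect a by-hand special case of this: the non-commuting pair $(d_0,d')$ is precisely what witnesses that the copy $\Delta_{g^*}$ does not collapse into the centraliser of the others.
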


In particular, using Example \ref{SmallLEFGrowthEx}, 
Theorem \ref{WPLEFGrthThm} allows us to construct 
groups of LEF growth $\approx \exp (\exp(n))$ 
and $\approx \exp(n^d)$ (for any $d \in \mathbb{N}$). 
Beyond this, however, few types of LEF growth functions 
had hitherto been observed, and our Theorem \ref{IntGrowthmainthm} 
greatly extends the spectrum of known growth types.  

The derived subgroup of the topological full group 
$\lBrack\sigma\rBrack$ of a minimal subshift $(X,\sigma)$ 
is a remarkable object in group theory. 
It is a finitely generated infinite simple group, 
which, as well as being LEF, is amenable \cite{JusMon}
(and indeed was the first group discovered with this combination of properties). 
It is also a natural invariant from the point of view of 
topological dynamics. 
As shown in \cite{BezMed}, 
for any Cantor minimal system $(X,\varphi)$, 
$\lBrack\varphi\rBrack^{\prime}$ 
retains perfect information about the 
dynamics of $(X,\varphi)$. 

\begin{thm}[Bezuglyi-Medynets]
Let $(X,\varphi)$ and $(Y,\psi)$ be Cantor minimal systems. 
Then $\lBrack\varphi\rBrack^{\prime} \cong\lBrack\psi\rBrack^{\prime}$ 
iff $(X,\varphi)$ and $(Y,\psi)$ are flip-conjugate. 
\end{thm}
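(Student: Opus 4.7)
The reverse direction is immediate: a flip-conjugacy $h:X\to Y$ with $h\varphi h^{-1}=\psi^{\epsilon}$ for some $\epsilon\in\{\pm 1\}$ induces an isomorphism $\lBrack\varphi\rBrack\to\lBrack\psi\rBrack$ by conjugation, since the topological full group depends only on the orbit partition and is invariant under inverting its defining homeomorphism; this restricts to the derived subgroups. For the forward direction, given an abstract isomorphism $\Phi:\lBrack\varphi\rBrack'\to\lBrack\psi\rBrack'$, my plan is a two-step reconstruction: first realise $\Phi$ spatially as conjugation by a homeomorphism $h:X\to Y$, then show that $h$ is itself a flip-conjugacy.

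For the spatial realisation I would appeal to Rubin's reconstruction theorem for groups of homeomorphisms of a perfect Hausdorff space acting locally densely. The local density condition demands that for every nonempty clopen $U\subseteq X$ and every $x\in U$, the pointwise stabiliser of $X\setminus U$ inside $\lBrack\varphi\rBrack'$ has a dense orbit through $x$. To verify this I would use the minimality of $\varphi$ to produce a Kakutani-Rokhlin partition with a column contained in $U$, and then construct clopen-supported $3$-cycles (and, where necessary, doubled transpositions) that permute fine clopen refinements of the column. These lie in $\lBrack\varphi\rBrack'$ by their parity and can be chained to move $x$ to any chosen target in $U$. Rubin's theorem then delivers $h:X\to Y$ with $\Phi(g)=hgh^{-1}$ for every $g\in\lBrack\varphi\rBrack'$.

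For the dynamical identification, the key preliminary is that the orbit equivalence relation of $\varphi$ can be recovered from $\lBrack\varphi\rBrack'$ alone: any two points in a common $\varphi$-orbit can be interchanged by an even clopen-supported permutation in $\lBrack\varphi\rBrack'$ (doubling an orbit-interval transposition by an auxiliary swap on a disjoint clopen pair), so the $\lBrack\varphi\rBrack'$-orbits agree with the $\varphi$-orbits and $h$ therefore carries $\varphi$-orbits to $\psi$-orbits. Because conjugation by $\varphi$ normalises $\lBrack\varphi\rBrack'$, the homeomorphism $h\varphi h^{-1}$ normalises $\lBrack\psi\rBrack'$ and preserves the $\psi$-orbit relation; combined with the uniqueness, up to sign, of a continuous $\mathbb{Z}$-valued cocycle generating a free minimal action on a given orbit relation, this forces $h\varphi h^{-1}\in\{\psi,\psi^{-1}\}$.

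The hard part is precisely this last identification: $\varphi$ does not itself belong to the derived subgroup, so transporting it under $\Phi$ is not an option, and distinguishing $\varphi$ from a generic orbit-equivalence of $(Y,\psi)$ without circularity requires a careful analysis of how $\lBrack\varphi\rBrack'$ sits inside its normaliser in $\Homeo(X)$. Making the cocycle-rigidity step above into a rigorous argument—rather than invoking it as a black box—is where I expect the bulk of the technical work to reside, and it is the step on which the whole theorem ultimately turns.
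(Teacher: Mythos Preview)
The paper does not prove this theorem at all: it is stated as a cited result of Bezuglyi and Medynets \cite{BezMed}, with no argument given. There is therefore no ``paper's own proof'' against which to compare your proposal.

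That said, your sketch is broadly in the spirit of how such spatial-realisation results are established in the literature (Rubin-type reconstruction to get a conjugating homeomorphism, followed by an orbit-equivalence and cocycle-rigidity argument to upgrade to flip-conjugacy). You are right to flag the final step as the delicate one: showing that an orbit equivalence coming from conjugation of the derived subgroup forces $h\varphi h^{-1}\in\{\psi,\psi^{-1}\}$ is the substantive point, and your appeal to ``uniqueness, up to sign, of a continuous $\mathbb{Z}$-valued cocycle generating a free minimal action on a given orbit relation'' is not quite a theorem one can invoke directly---a general orbit equivalence of Cantor minimal systems need not be a flip-conjugacy, so one must use more than the orbit relation alone. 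The actual argument (in Bezuglyi--Medynets, building on earlier work of Giordano--Putnam--Skau) passes through the structure of the normaliser of the full group and the index map, rather than a bare cocycle-uniqueness statement. If you want to flesh this out, that is where to look.
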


It is therefore reasonable to expect that 
group-theoretic asymptotic invariants of $\lBrack\varphi\rBrack^{\prime}$ 
should reflect asymptotic features of the dynamical system $(X,\varphi)$. 
Our Theorems \ref{UBmainthm} and \ref{LBmainthm} are in this spirit: 
knowing the LEF growth function of $\lBrack\sigma\rBrack^{\prime}$ 
allows one to deduce some bounds on the recurrence or complexity 
functions of $X$. 

Our proof of Theorem \ref{UBmainthm} is based on Elek's streamlined 
proof of LEF for topological full groups \cite{Elek}. 
Given $\Gamma \leq \lBrack \sigma \rBrack$, 
finitely generated by $S$, and a two-sided sequence $\mathbf{x} \in X$, 
the $\sigma$-orbit $\mathcal{O}$ of $\mathbf{x}$ is dense in $X$, 
so $\Gamma$ acts faithfully on $\mathcal{O}$. 
Further, any short word in $S$ moves some cylinder set $C \subseteq X$, 
defined by a short string in $\mathbf{x}$. 
Since only finitely many such cylinder sets $C$ arise, 
each of which intersects $\mathcal{O}$, 
there exists $M \in \mathbb{N}$ such that no nonidentity element of $B_S (n)$ 
fixes $\lbrace \sigma^i \mathbf{x} : 1 \leq i \leq M \rbrace$ pointwise; 
moreover we can take $M \leq R_X (Cn)$ for some $C>0$. 
Carefully choosing the exact value of $M$ to ensure consistency, 
we use this to construct a local embedding $B_S(n) \rightarrow \Sym(M)$. 

For our lower bound, 
we observe that $\lBrack \sigma \rBrack^{\prime}$ 
contains many copies of the alternating group $\Alt(5)$, 
acting on disjoint subsets of $X$ (hence generating their direct product). 
It follows that any finite group admitting a local embedding 
of a large ball in $\lBrack \sigma \rBrack^{\prime}$ 
also contains a direct product of many copies of $\Alt(5)$ 
as a subgroup, and so has large order. 
The supply of disjoint subsets on which to act, in this construction, 
is limited by the complexity function $p_X$, 
hence the appearance of $p_X$ in Theorem \ref{LBmainthm}. 

This paper is structured as follows: 
in Subsections \ref{LEFSubsect}, \ref{WordsSubsect}, 
and \ref{TFGSubsect} we collect necessary background results about 
LEF growth of groups, symbolic dynamics, and topological full groups, 
respectively. 
In Section \ref{UBSection} we construct the local embeddings 
required to prove Theorem \ref{UBmainthm}. 
In Section \ref{LBSection} we prove Theorem \ref{LBmainthm}. 
In Section \ref{IntSection} we describe the construction of the 
minimal subshifts arising in Theorem \ref{IntGrowthShiftThm}, 
and deduce Theorems \ref{IntGrowthmainthm} and \ref{UnctbleThm}. 

\section{Preliminaries}

\subsection{LEF groups and Schreier graphs} \label{LEFSubsect}

\begin{defn}
For $\Gamma,\Delta$ groups and $F \subseteq \Gamma$, 
a \emph{partial homomorphism} of $F$ into $\Delta$ is a 
function $\phi : F \rightarrow \Delta$ 
such that, for all $g,h \in F$, if $gh \in F$, 
then $\phi(gh)=\phi(g)\phi(h)$. 
A partial homomorphism 
$\phi$ is called a \emph{local embedding} if it is injective. 
$\Gamma$ is \emph{locally embeddable into finite groups (LEF)} 
if, for all finite $F \subseteq \Gamma$, 
there exists a finite group $Q$ and a local embedding of 
$F$ into $Q$. 
\end{defn}

Henceforth suppose that $\Gamma$ is LEF 
and generated by the finite set $S$. 
Let $B_S(n) \subseteq \Gamma$ denote those elements 
of length at most $n$, with respect to the word metric 
induced on $\Gamma$ by $S$. 

\begin{defn}
The \emph{LEF growth} of $\Gamma$ (with respect to $S$) is: 
\begin{center}
$\mathcal{L}_{\Gamma} ^S (n) = \min \lbrace \lvert Q \rvert 
: \exists \phi :B_S(n)\rightarrow Q\text{ a local embedding} \rbrace$
\end{center}
and the \emph{LEF action growth} is: 
\begin{center}
$\mathcal{LA}_{\Gamma} ^S (n) = \min \lbrace d 
: \exists \phi :B_S(n)\rightarrow \Sym(d)\text{ a local embedding} \rbrace$. 
\end{center}
\end{defn}

\begin{rmrk} \label{LEFLAGrthIneqRmrk}
It is clear that 
$\mathcal{LA}_{\Gamma} ^S (n) \leq \mathcal{L}_{\Gamma} ^S (n) 
\leq \mathcal{LA}_{\Gamma} ^S (n)!$. 
\end{rmrk}

\begin{defn} \label{FunctionCompDefn}
For $F_1 , F_2 : \mathbb{N} \rightarrow \mathbb{N}$ nondecreasing functions, 
write $F_1 \preceq F_2$ if there exists $C>0$ 
such that $F_1 (n) \leq F_2 (Cn)$ for all $n$.
Write $F_1 \approx F_2$ if $F_1 \preceq F_2$ and $F_2 \preceq F_1$. 
\end{defn}

\begin{lem}
Let $\mathcal{F} = \mathcal{L}$ or $\mathcal{LA}$. 
Let $\Delta \leq \Gamma$ be finitely generated by $T$. 
Then there exists $C>0$ such that for all $n$, 
\begin{equation*}
\mathcal{F}_{\Delta} ^T (n) \leq \mathcal{F}_{\Gamma} ^S (Cn)
\end{equation*}
In particular, for $T$ a second finite generating set for $\Gamma$, 
$\mathcal{F}_{\Gamma} ^S \approx \mathcal{F}_{\Gamma} ^T$. 
\end{lem}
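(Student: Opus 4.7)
The plan is to obtain $C$ by comparing the two word metrics, then restrict local embeddings from the larger ball to the smaller one.

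Since $\Delta \leq \Gamma$, every element of $T$ lies in $\Gamma$ and so may be written as a word in $S^{\pm 1}$. Set $C = \max_{t \in T} \lvert t \rvert_S$, where $\lvert \cdot \rvert_S$ denotes word length with respect to $S$. A straightforward induction on $n$ gives the inclusion $B_T^{\Delta}(n) \subseteq B_S^{\Gamma}(Cn)$, which is the only place where the constant $C$ is used.

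Now, given $n$, pick any local embedding $\phi : B_S^{\Gamma}(Cn) \rightarrow Q$ into a finite group (or into $\Sym(d)$, in the $\mathcal{LA}$ case) that achieves the minimum defining $\mathcal{F}_{\Gamma}^S(Cn)$. I claim that the restriction $\phi|_{B_T^{\Delta}(n)}$ is again a local embedding. Injectivity is immediate, being inherited by any restriction. For the partial homomorphism condition: if $g, h \in B_T^{\Delta}(n)$ and $gh \in B_T^{\Delta}(n)$, then by the inclusion above all three elements lie in $B_S^{\Gamma}(Cn)$, so $\phi(gh) = \phi(g) \phi(h)$ by the partial homomorphism property of $\phi$ on $B_S^{\Gamma}(Cn)$. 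Thus the restriction maps $B_T^{\Delta}(n)$ into the same target group $Q$, yielding $\mathcal{F}_{\Delta}^T(n) \leq \lvert Q \rvert = \mathcal{F}_{\Gamma}^S(Cn)$ (or the analogous $\leq d$ in the $\mathcal{LA}$ case).

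For the ``In particular'' clause, apply the main statement twice with $\Delta = \Gamma$: once swapping the roles of $S$ and $T$ to get $\mathcal{F}_{\Gamma}^T \preceq \mathcal{F}_{\Gamma}^S$, and once in the stated form to get $\mathcal{F}_{\Gamma}^S \preceq \mathcal{F}_{\Gamma}^T$. There is no real obstacle here; the only thing worth emphasising is that the class of partial homomorphisms, and of local embeddings, is closed under restriction to sub-domains, so that bounds on larger balls transfer automatically to bounds on any smaller set sitting inside them.
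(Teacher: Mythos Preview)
Your proof is correct and is exactly the standard argument: the paper itself does not spell it out but merely cites \cite[Corollary~2.7]{Brad} for $\mathcal{F}=\mathcal{L}$ and remarks that the $\mathcal{LA}$ case is identical. One trivial slip: in the ``In particular'' clause you have the two directions labelled the wrong way round (the stated form with $\Delta=\Gamma$ already gives $\mathcal{F}_{\Gamma}^{T}\preceq\mathcal{F}_{\Gamma}^{S}$, and swapping $S$ and $T$ gives the other inequality), but this is cosmetic and does not affect the argument.
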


\begin{proof}
This is proved for $\mathcal{F}=\mathcal{L}$ 
as Corollary 2.7 in \cite{Brad}; 
the proof for $\mathcal{F} = \mathcal{LA}$ is identical. 
\end{proof}

The next Proposition is key to the proof of Theorem \ref{LBmainthm}. 
It uses an idea already exploited in \cite[Theorem 3.4]{Brad} 
to control the LEF growth of wreath products. 

\begin{propn} \label{dirprodLEFprop}
Let $n \in \mathbb{N}$ and $m \geq 2$. 
Suppose $\Delta_1 , \ldots , \Delta_m \leq \Gamma$ 
are finite centreless subgroups, 
generating their direct product, 
and that $\Delta_i \subseteq B_S (n)$. 
Suppose that $Q$ is a finite group and that 
$\phi : B_S (2n) \rightarrow Q$ is a local embedding. 
Then the $\phi (\Delta_i) \leq Q$ generate their direct product, and 
$\lvert Q \rvert \geq \prod_i \lvert \Delta_i \rvert$. 
\end{propn}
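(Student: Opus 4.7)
The plan is to show, in turn, (a) that $\phi$ restricts to an injective homomorphism on each $\Delta_i$, (b) that the images $\phi(\Delta_i)$ pairwise commute in $Q$, and (c) that for each $i$, $\phi(\Delta_i) \cap \langle \phi(\Delta_j) : j \neq i \rangle = \lbrace 1 \rbrace$, which together yield the desired internal direct product and the cardinality bound. The centreless hypothesis will be used exactly at step (c), and that is the step I expect to require the most care.

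For (a), fix $i$. For any $g,h \in \Delta_i$, all of $g,h,gh$ lie in $\Delta_i \subseteq B_S(n) \subseteq B_S(2n)$, so the partial homomorphism property gives $\phi(gh) = \phi(g)\phi(h)$. Thus $\phi\vert_{\Delta_i}$ is a homomorphism $\Delta_i \to Q$; injectivity is inherited from $\phi$ being a local embedding on $B_S(2n)$, so $\phi(\Delta_i) \cong \Delta_i$ and in particular $\lvert \phi(\Delta_i) \rvert = \lvert \Delta_i \rvert$. A standard computation $\phi(1) = \phi(1)^2$ also forces $\phi(1) = 1$.

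For (b), take $g \in \Delta_i$ and $h \in \Delta_j$ with $i \neq j$. Since the $\Delta_k$ generate their direct product inside $\Gamma$, we have $gh = hg$ in $\Gamma$. Both products lie in $B_S(2n)$, so by the partial homomorphism property $\phi(g)\phi(h) = \phi(gh) = \phi(hg) = \phi(h)\phi(g)$.

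For (c), let $H_k = \phi(\Delta_k)$ and $K_i = \langle H_j : j \neq i \rangle$. By (b) every element of $K_i$ centralises $H_i$, so any $q \in H_i \cap K_i$ lies in the centraliser of $H_i$. Writing $q = \phi(g)$ with $g \in \Delta_i$, I will show $g$ centralises $\Delta_i$ in $\Gamma$. Indeed, for any $h \in \Delta_i$ we have $\phi(g)\phi(h) = \phi(h)\phi(g)$; since $gh,hg \in B_S(2n)$, the partial homomorphism property gives $\phi(gh) = \phi(hg)$, and injectivity of $\phi$ on $B_S(2n)$ yields $gh = hg$. Thus $g \in Z(\Delta_i) = \lbrace 1 \rbrace$ by the centreless hypothesis, and hence $q = \phi(1) = 1$. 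Combined with (b), this shows that the product map $\Delta_1 \times \cdots \times \Delta_m \to Q$, $(g_1,\ldots,g_m) \mapsto \phi(g_1)\cdots\phi(g_m)$, is an injective homomorphism, so $\lvert Q \rvert \geq \prod_i \lvert \Delta_i \rvert$ as required.

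The main obstacle is step (c): one cannot simply invoke the partial homomorphism property on products $g_1 \cdots g_m$, since these elements may have word length up to $mn$, well outside $B_S(2n)$. Using centrelessness to reduce the question to commutation of pairs (which does stay inside $B_S(2n)$) is what makes the argument go through and explains why the centreless hypothesis is needed.
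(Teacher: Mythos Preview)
Your proof is correct and follows essentially the same approach as the paper: establish that $\phi$ is an injective homomorphism on each $\Delta_i$, verify pairwise commutation of the images via the partial homomorphism property on $B_S(2n)$, and then use centrelessness to show that any element of $\phi(\Delta_i)$ lying in $\langle \phi(\Delta_j):j\neq i\rangle$ must pull back to a central (hence trivial) element of $\Delta_i$. Your presentation is slightly more explicit about $\phi(1)=1$ and the final product map, but the argument is the same.
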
 

\begin{proof}
Since $\phi$ restricts to an injective homomorphism on each $\Delta_i$, 
$\phi(\Delta_i)$ is a subgroup of $Q$, isomorphic to $\Delta_i$. 
Certainly, for $i \neq j$ and $g_i \in \Delta_i$, $g_j \in \Delta_j$, 
\begin{center}
$\phi(g_i)\phi(g_j) = \phi(g_i g_j) 
= \phi(g_j g_i) = \phi(g_j)\phi(g_i)$. 
\end{center}
Therefore, if the $\phi(\Delta_i)$ fail to generate their direct product, 
there exists $1 \leq i \leq m$ and $1 \neq g \in \Delta_i$ 
such that $\phi (g) \in P$, 
where $P = \langle \phi(\Delta_j):i\neq j \rangle \leq Q$. 
$P$ centralizes $\phi(\Delta_i)$, since the $\phi(\Delta_j)$ do, 
so for $h \in \Delta_i$, 
$\phi (gh) = \phi (g)\phi (h) = \phi (h)\phi (g) = \phi (hg)$. 
By injectivity of $\phi$ restricted to $\Delta_j$, 
$g$ is central in $\Delta_i$, contradiction. 
\end{proof}

A \emph{based graph} is a pair $(G,v)$, 
where $G$ is a directed graph and $v \in V(G)$. 
A \emph{morphism} of based graphs $(G_1,v_1)\rightarrow (G_2,v_2)$ is 
a graph-morphism $\phi: G_1 \rightarrow G_2$ 
with $\phi(v_1)=v_2$. 
For $C$ a set, an \emph{edge-colouring} of the graph $G$ in $C$ 
is a function $c:E(G)\rightarrow C$. 
A morphism $(G_1,v_1,c_1)\rightarrow (G_2,v_2,c_2)$ of based graphs 
with edge colourings in $C$ is a morphism $\phi:(G_1,v_1)\rightarrow (G_2,v_2)$ 
of based graphs such that, for all $e \in E(G_1)$, 
$c_1 (e) = c_2 (\phi (e))$. 

\begin{defn}
Let $\Gamma$ be a group, $\Omega$ be a $\Gamma$-set, and $S \subseteq \Gamma$. 
The associated \emph{Schreier graph} $\Schr(\Gamma,\Omega,\mathbf{S})$ 
is the graph with vertex set $\Omega$ and edge set $\Omega \times S$, 
with the edge $(\omega,s)$ running from $\omega$ to $s \omega$. 
Impose an ordering on the elements of $S$, 
to obtain an ordered $\lvert S \rvert$-tuple 
$\mathbf{S} \in \Gamma ^{\lvert S \rvert}$
(equivalently fix a bijection 
$c:S\rightarrow \lbrace 1,\ldots,\lvert S \rvert \rbrace$). 
Then $\Schr(\Gamma,\Omega,\mathbf{S})$ is naturally 
an edge-coloured graph with colours 
in $\lbrace 1,\ldots,\lvert S \rvert \rbrace$, 
via $c\big( (\omega, s) \big) = c(s)$. 
\end{defn}

\begin{defn}
Let $G_1 , G_2$ be directed edge-coloured graphs 
(with colours in $C$) and let $r \in \mathbb{N}$. 
We say that $G_1$ is 
\emph{locally embedded in $G_2$ at radius $r$} if, 
for every $v \in V(G_1)$, there exists $w \in V(G_2)$ and an 
isomorphism of based coloured graphs $(B_v (r),v) \cong (B_w (r),w)$ 
(here $B_v (r) \subseteq G_1$ is the induced subgraph 
on the closed ball of radius $r$ 
around $v$ in the path metric on $G_1$, and likewise for $B_w (r)\subseteq G_2$).  
We say that $G_1$ and $G_2$ are 
\emph{locally colour isomorphic at radius $r$} if 
each is locally embedded in the other at radius $r$, 
that is, $G_1$ and $G_2$ have the same set of 
isomorphism-types of balls of radius $r$. 
\end{defn}

The next observation is our key tool for constructing the local embeddings 
needed in Theorem \ref{UBmainthm}; 
it is proved as Lemma 4.2 in \cite{Brad}. 

\begin{lem} \label{permisolem}
For $i=1,2$ let $\Gamma_i$ be a group acting faithfully on a set 
$\Omega_i$, and let $\mathbf{S}_i$ 
be an ordered generating $d$-tuple in $\Gamma_i$. 
Suppose that the Schreier graphs 
$\Schr (\Gamma_i,\Omega_i,\mathbf{S}_i)$ 
are locally colour-isomorphic at some radius at least $\lceil 3r/2 \rceil$. 
Then there is a local embedding 
$B_{\mathbf{S}_1} (r) \rightarrow \Gamma_2$ extending 
$(\mathbf{S}_1)_j \mapsto (\mathbf{S}_2)_j$. 
\end{lem}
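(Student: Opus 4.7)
The plan is to build a local embedding $\phi: B_{\mathbf{S}_1}(r) \to \Gamma_2$ by fixing, for each $g \in B_{\mathbf{S}_1}(r)$, a word $w_g$ in $\mathbf{S}_1$ of length at most $r$ that represents $g$ (with the single letter $(\mathbf{S}_1)_j$ chosen as the word for each generator), and then setting $\phi(g) = w_g(\mathbf{S}_2)$. The key fact underlying everything is that if a word $w$ of length $k$ in $\mathbf{S}_i$ represents the identity of $\Gamma_i$, then from any base vertex $\omega \in \Omega_i$ the path labelled by $w$ in $\Schr(\Gamma_i, \Omega_i, \mathbf{S}_i)$ is a closed loop, and every intermediate vertex lies within distance $\lceil k/2 \rceil$ of $\omega$ (reachable via whichever half of the loop is shorter). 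Thus closed loops of length at most $k$ are entirely visible inside balls of radius $\lceil k/2 \rceil$.

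Well-definedness of $\phi$ follows by applying this principle with $k=2r$. If $u, v$ are two words of length at most $r$ representing the same element of $\Gamma_1$, then $u v^{-1}$ represents the identity of $\Gamma_1$, so its labelled loop from any base point lies inside a ball of radius $r \leq \lceil 3r/2 \rceil$. Local colour-isomorphism at radius $\lceil 3r/2 \rceil$ transfers the loop to $\Omega_2$, where it remains closed; by faithfulness of $\Gamma_2 \curvearrowright \Omega_2$ we conclude $u(\mathbf{S}_2) = v(\mathbf{S}_2)$. Injectivity of $\phi$ is symmetric: if $\phi(g_1) = \phi(g_2)$ then a closed loop of length $\leq 2r$ in $\Omega_2$ transfers to a closed loop in $\Omega_1$, forcing $g_1 = g_2$ by faithfulness on the other side.

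The partial homomorphism property is where the bound $\lceil 3r/2 \rceil$ becomes essential. If $g_1, g_2, g_1 g_2$ all lie in $B_{\mathbf{S}_1}(r)$, then $w_{g_1} w_{g_2} w_{g_1 g_2}^{-1}$ has length at most $3r$ and represents the identity of $\Gamma_1$. Its labelled loop in $\Schr(\Gamma_1, \Omega_1, \mathbf{S}_1)$ lies within $B_{\omega_1}(\lceil 3r/2 \rceil)$ at every base point, so the hypothesis transfers it to a closed loop in $\Schr(\Gamma_2, \Omega_2, \mathbf{S}_2)$. Hence $\phi(g_1) \phi(g_2) \phi(g_1 g_2)^{-1}$ acts trivially on $\Omega_2$, and by faithfulness equals the identity of $\Gamma_2$, giving $\phi(g_1 g_2) = \phi(g_1) \phi(g_2)$.

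The main obstacle is getting the radius down to $\lceil 3r/2 \rceil$ in the partial homomorphism step. The naive verification, computing $\phi(g_1) \cdot (\phi(g_2) \cdot \omega_2)$ one factor at a time, traces a path of length $2r$ from $\omega_2$ and would seem to demand radius-$2r$ local isomorphism. The improvement comes from treating the entire relation $w_{g_1} w_{g_2} w_{g_1 g_2}^{-1}$ as a single closed loop of length $3r$, which by the key fact stays within a ball of half its length.
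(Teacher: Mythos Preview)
Your argument is correct. The paper does not supply its own proof of this lemma; it simply cites \cite[Lemma~4.2]{Brad}, so there is no in-paper proof to compare against. Your approach---fix a representative word $w_g$ for each $g\in B_{\mathbf{S}_1}(r)$, set $\phi(g)=w_g(\mathbf{S}_2)$, and verify the partial-homomorphism and injectivity conditions by transferring closed loops of length $\le 3r$ (resp.\ $\le 2r$) through the radius-$\lceil 3r/2\rceil$ ball isomorphisms---is the standard one and is exactly what the cited reference does. The key observation you isolate, that a closed $w$-labelled loop of length $k$ based at $\omega$ stays inside $B_\omega(\lceil k/2\rceil)$ because each intermediate vertex is reachable via the shorter half of the loop, together with the uniqueness of colour-labelled out-edges in a Schreier graph (so that the transferred path \emph{is} the $w$-path from the new basepoint), is precisely what makes the bound $\lceil 3r/2\rceil$ rather than $2r$ work. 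One small remark: your ``well-definedness'' paragraph is not strictly needed once $w_g$ is fixed, but it does establish that $\phi$ is independent of the choice of representatives, which in particular confirms that $\phi$ genuinely extends $(\mathbf{S}_1)_j\mapsto(\mathbf{S}_2)_j$ even if the ordered tuple $\mathbf{S}_1$ has repeated entries.
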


\subsection{Words} \label{WordsSubsect}

Throughout, $A$ will be a finite discrete set with $\lvert A \rvert \geq 2$, 
the \emph{alphabet}. 
Let $A^{\ast} = \sqcup_n A^n$ be the set of \emph{finite words} over $A$. 
An \emph{infinite word} shall be an element of either $A^{\mathbb{Z}}$ 
or $A^{\mathbb{N}}$. 
We equip these latter sets with the product topology; 
note that both are thereby homeomorphic to the Cantor space. 

\begin{defn} \label{ComplexityDefn}
Let $w \in A^{\mathbb{Z}}$. 
For $k \in \mathbb{Z}$ and $n \in \mathbb{N}$, 
the \emph{$k$th $n$-factor} of $w$ is 
$w_k w_{k+1} \cdots w_{k+n-1} \in A^n$. 
$v \in A^n$ is an $n$-factor of $w$ 
if it is the $k$th $n$-factor for some $k$. 
$n$-factors of words in $A^{\mathbb{N}}$ and $A^{\ast}$ 
are defined similarly, with the requirement that $k \in \mathbb{N}$ and, 
for $w \in A^{\ast}$, that $k \leq \lvert w \rvert-n+1$. 

In any case, the set of all $n$-factors of $w$ is denoted $F_n (w)$. 
For $w$ an infinite word, 
the \emph{complexity function} 
$p_w :\mathbb{N}\rightarrow\mathbb{N}$ of $w$
is given by $p_w (n) = \lvert F_n (w) \rvert$. 
\end{defn}

It is immediate from the definitions that 
$p_w (n+m) \leq p_w(n)p_w(m)$ and 
$p_{w}(n) \leq \lvert A \rvert ^n$ for all $n,m \in \mathbb{N}$. 

\begin{defn}
The \emph{entropy} of $w \in A^{\mathbb{Z}}$ is: 
\begin{equation*}
h(w) = \lim_{n \rightarrow \infty} \frac{\log p_w(n)}{n}
\end{equation*}
(the limit is well-defined, by the preceding remarks). 
\end{defn}

\begin{defn} \label{RecurrenceDefn}
$\mathbf{x} \in A^{\mathbb{Z}}$ is \emph{uniformly recurrent} if, 
for every $n \in \mathbb{N}$, 
there exists $M_n \in \mathbb{N}$ such that, 
for every $w \in F_n (\mathbf{x})$ and $v \in F_{M_n} (\mathbf{x})$, 
$w \in F_n (v)$. 
The smallest such $M_n$ is denoted $R_{\mathbf{x}} (n)$, 
and $R_{\mathbf{x}} : \mathbb{N} \rightarrow \mathbb{N}$ 
is the \emph{recurrence function} of $\mathbf{x}$. 
\end{defn}

Henceforth assume $\mathbf{x} \in A^{\mathbb{Z}}$ 
to be uniformly recurrent non-periodic. 

\begin{thm}[\cite{MorHed} Theorem 7.5] \label{MHThm}
$R_{\mathbf{x}} (n) \geq p_{\mathbf{x}}(n)+n \geq 2n+1$ for all $n \in \mathbb{N}$. 
\end{thm}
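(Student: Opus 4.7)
My approach is to prove the two displayed inequalities separately. The second inequality $p_{\mathbf{x}}(n) + n \geq 2n + 1$ reduces to the classical Morse--Hedlund bound $p_{\mathbf{x}}(n) \geq n + 1$ for non-periodic uniformly recurrent $\mathbf{x}$. I would derive this by showing that $p_{\mathbf{x}}$ is strictly increasing: if $p_{\mathbf{x}}(n) = p_{\mathbf{x}}(n+1)$ for some $n$, then each $n$-factor of $\mathbf{x}$ has a unique right-extension to an $(n+1)$-factor, so every occurrence of an $n$-factor deterministically prescribes all subsequent letters of $\mathbf{x}$. Finiteness of the factor set forces the same $n$-factor to occur at two distinct positions, producing an eventually periodic right-tail; uniform recurrence then upgrades this to full periodicity of $\mathbf{x}$, contrary to hypothesis. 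Since $p_{\mathbf{x}}(1) \geq 2$ (else $\mathbf{x}$ is constant and hence periodic), induction yields $p_{\mathbf{x}}(n) \geq n + 1$.

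For the first inequality, set $M = R_{\mathbf{x}}(n)$ and fix any $v \in F_M(\mathbf{x})$. By definition, $v$ contains each of the $p_{\mathbf{x}}(n)$ distinct $n$-factors of $\mathbf{x}$; since $v$ admits only $M - n + 1$ positions at which an $n$-subfactor can start, this immediately gives $M \geq p_{\mathbf{x}}(n) + n - 1$. To upgrade this to $M \geq p_{\mathbf{x}}(n) + n$, I would assume for contradiction that equality $M = p_{\mathbf{x}}(n) + n - 1$ holds. In this extremal case, every $v \in F_M(\mathbf{x})$ has exactly $p_{\mathbf{x}}(n)$ length-$n$ subfactors, which by the tight count must be pairwise distinct; equivalently, each $n$-factor appears exactly once in every such $v$.

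Now I would take a longer window $w = w_0 w_1 \cdots w_{p+n-1} \in F_{M+1}(\mathbf{x})$, where $p = p_{\mathbf{x}}(n)$. Both its length-$M$ prefix and its length-$M$ suffix are factors of $\mathbf{x}$, so each exhibits all $p$ length-$n$ factors without repetition. However, $w$ itself enumerates $p + 1$ length-$n$ subfactors drawn from a pool of just $p$, so two must coincide; the only pair not already separated by the distinctness in the prefix and in the suffix is the first and the last, forcing $w_0 \cdots w_{n-1} = w_p \cdots w_{p+n-1}$, and in particular $w_0 = w_p$. Since an arbitrary position of $\mathbf{x}$ initiates such a window, this gives $\mathbf{x}_{k+p} = \mathbf{x}_k$ for all $k \in \mathbb{Z}$, contradicting non-periodicity. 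The main subtlety is this final sliding argument: the naive pigeonhole count alone yields only $R_{\mathbf{x}}(n) \geq p_{\mathbf{x}}(n) + n - 1$, and one has to exploit non-periodicity carefully to extract the extra unit.
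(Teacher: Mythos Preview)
The paper does not supply its own proof of this statement; it is quoted directly from Morse and Hedlund with the citation \cite{MorHed}, Theorem 7.5, so there is nothing in the paper to compare your argument against.

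Your argument is correct and follows the classical route. Two small remarks. First, in the proof of $p_{\mathbf{x}}(n)\geq n+1$, the appeal to uniform recurrence is not strictly necessary: once $p_{\mathbf{x}}(n)=p_{\mathbf{x}}(n+1)$, the restriction maps $F_{n+1}(\mathbf{x})\to F_n(\mathbf{x})$ obtained by deleting either the last \emph{or} the first letter are surjections between finite sets of equal size, hence bijections, so every $n$-factor has a unique \emph{left} extension as well as a unique right extension, and periodicity of the full two-sided sequence follows immediately. Your route via uniform recurrence is also valid, but the step ``uniform recurrence then upgrades this to full periodicity'' deserves one more sentence (e.g.\ every factor of $\mathbf{x}$ occurs inside the periodic tail, so the language of $\mathbf{x}$ coincides with that of a periodic sequence, and minimality forces $\mathbf{x}$ into the finite orbit).

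Second, your sliding-window argument for $R_{\mathbf{x}}(n)\geq p_{\mathbf{x}}(n)+n$ is clean and complete: the key observation that in the extremal case the only possible coincidence among the $p+1$ length-$n$ subfactors of $w\in F_{M+1}(\mathbf{x})$ is between the first and the last, because the length-$M$ prefix and suffix each already exhaust all $p$ factors without repetition, is exactly the point where non-periodicity enters, and you have identified it correctly.
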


In particular, $R_{\mathbf{x}}$ grows at least linearly in $n$. 

\begin{defn}
$\mathbf{x}$ is \emph{linearly recurrent} if there exists 
$C > 0$ such that $R_{\mathbf{x}} (n) \leq Cn$ 
for all $n \in \mathbb{N}$. 
\end{defn}

\begin{ex}[\cite{MorHed2} Theorem 11.4]
The Fibonacci sequence is linearly recurrent.
\end{ex}

\begin{defn}
A \emph{cylinder set} of $A^{\mathbb{Z}}$ is a set of the form: 
\begin{center}
$\lAngle u_{k},\ldots,u_{-1},\underline{u}_0,u_1,\ldots,u_l\rAngle
 = \lbrace \mathbf{y} \in A^{\mathbb{Z}} 
 : y_i = u_i \text{ for } k \leq i \leq l \rbrace$,
\end{center}
where $k,l \in \mathbb{Z}$ with $k \leq 0 \leq l$ and $u_i \in A$ 
for $k \leq i \leq l$. 
For $u \in A^n$ and $1 \leq i \leq n$, 
we write $\lAngle u \rAngle_i$ for the cylinder set 
$\lAngle u_1,\ldots,u_{i-1},\underline{u}_i,u_{i+1},\ldots,u_n\rAngle$. 

For $X \subseteq A^{\mathbb{Z}}$ clopen and $m \in \mathbb{N}$, 
an \emph{$m$-cylinder of $X$} is a nonempty set of the form: 
\begin{center}
$X \cap \lAngle u_{-m},\ldots,u_{-1},\underline{u}_0,u_1,\ldots,u_m\rAngle$. 
\end{center}
\end{defn}

\begin{rmrk} \label{cylpartrmrk}
Let $u \in A^n$ and $1 \leq i \leq n$. 
\begin{itemize}
\item[(i)] For any $k,l\in\mathbb{N}$, 
\begin{equation*}
\bigsqcup_{v \in A^k} \lAngle vu \rAngle_{k+i} = \lAngle u \rAngle_i
 = \bigsqcup_{w \in A^l} \lAngle uw \rAngle_{i}
\end{equation*}
In particular, for any $m \in \mathbb{N}$, the set of all $m$-cylinders 
of $X$ forms a clopen partition of $X$, 
and for $1 \leq k \leq m$, any $k$-cylinder of $X$ is the disjoint union of 
the $m$-cylinders of $X$ which intersect it. 
\item[(ii)] 
The cylinder sets form a basis for the topology on $A^{\mathbb{Z}}$. 
Hence, for any clopen subset $Y$ of $A^{\mathbb{Z}}$, 
$Y$ is the union of cylinder sets. 
By compactness, and by (i), 
there exists $C=C(Y)>0$ such that, 
for all $m \geq C$, $Y$ is the disjoint union 
of $m$-cylinders of $A^{\mathbb{Z}}$.
\end{itemize}
\end{rmrk}

The next Lemma shall be used in the proof of Theorem \ref{IntGrowthShiftThm}. 

\begin{lem} \label{LimitWordLem}
Let $(L_j)$ be an increasing sequence of positive integers; 
let $x^{(j)} \in A^{L_j}$. 
For each $j \in \mathbb{N}$, let $K_j \in \mathbb{N}$ with 
$2 \leq K_j \leq L_{j+1} - L_j$ and suppose that 
$x^{(j)}$ is the $K_j$th $L_j$-factor of $x^{(j+1)}$. 
Choose $M_0 \in \mathbb{N}$ with $1 \leq M_0 \leq L_0$, 
and define $(M_j)$ recursively via $M_{j+1} = M_j + K_j - 1$. 
Then there is a unique point $\mathbf{x} \in A^{\mathbb{Z}}$ 
lying in the intersection of the cylinder sets $\lAngle x^{(j)} \rAngle_{M_j}$. 
Moreover for all $n$, $F_n (\mathbf{x}) = \bigcup_j F_n (x^{(j)})$.
\end{lem}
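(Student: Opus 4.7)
The plan is to prove this by constructing $\mathbf{x}$ as the (forced unique) limit of a nested sequence of cylinder sets, checked for consistency via direct index-chasing.

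First I would verify that the hypothesis ``$x^{(j)}$ is the $K_j$th $L_j$-factor of $x^{(j+1)}$'' together with the recursion $M_{j+1} = M_j + K_j - 1$ gives the nesting $\lAngle x^{(j+1)} \rAngle_{M_{j+1}} \subseteq \lAngle x^{(j)} \rAngle_{M_j}$. Unpacking the definition, a point $\mathbf{y} \in A^{\mathbb{Z}}$ lies in $\lAngle x^{(j)} \rAngle_{M_j}$ iff $y_\ell = x^{(j)}_{\ell + M_j}$ for all $\ell \in [1 - M_j,\, L_j - M_j]$. The factor hypothesis says $x^{(j)}_i = x^{(j+1)}_{i + K_j - 1}$ for $1 \leq i \leq L_j$, so $x^{(j)}_{\ell + M_j} = x^{(j+1)}_{\ell + M_j + K_j - 1} = x^{(j+1)}_{\ell + M_{j+1}}$; this is exactly the compatibility needed for the inclusion, once one also checks that the integer interval $[1 - M_j, L_j - M_j]$ sits inside $[1 - M_{j+1}, L_{j+1} - M_{j+1}]$.

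Next I would track two monotonicity facts. Since $K_j \geq 2$, the recursion gives $M_{j+1} - M_j = K_j - 1 \geq 1$, so $M_j \to \infty$. Since $K_j \leq L_{j+1} - L_j$, we get
\begin{equation*}
(L_{j+1} - M_{j+1}) - (L_j - M_j) = (L_{j+1} - L_j) - (K_j - 1) \geq 1,
\end{equation*}
so $L_j - M_j \to \infty$ as well. A routine induction from $1 \leq M_0 \leq L_0$ shows $1 \leq M_j \leq L_j$ for all $j$, so each $\lAngle x^{(j)} \rAngle_{M_j}$ is a nonempty clopen set. Existence of $\mathbf{x}$ then follows from compactness of $A^{\mathbb{Z}}$: the decreasing sequence of nonempty closed sets has nonempty intersection. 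Uniqueness is immediate from the two monotonicity facts, since any $\mathbf{x}$ in the intersection has its value at coordinate $\ell$ determined, for every $\ell \in \mathbb{Z}$, once $j$ is large enough that $\ell \in [1 - M_j, L_j - M_j]$.

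For the final claim $F_n(\mathbf{x}) = \bigcup_j F_n(x^{(j)})$: the inclusion $\supseteq$ is clear, because the formula $\mathbf{x}_\ell = x^{(j)}_{\ell + M_j}$ for $\ell \in [1 - M_j, L_j - M_j]$ presents $x^{(j)}$ as a window inside $\mathbf{x}$, so every $n$-factor of $x^{(j)}$ appears in $\mathbf{x}$. For $\subseteq$, given an $n$-factor $u = \mathbf{x}_k \cdots \mathbf{x}_{k+n-1}$, pick $j$ large enough that $[k, k+n-1] \subseteq [1 - M_j, L_j - M_j]$ (possible by the two divergences above); then $u$ is precisely the $(k + M_j)$th $n$-factor of $x^{(j)}$. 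There is no real obstacle here; the only delicate point is the index bookkeeping in the nesting step, which is why I would set it up carefully before anything else.
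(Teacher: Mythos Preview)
Your proposal is correct and follows essentially the same approach as the paper's proof: nested nonempty closed sets in a compact space give existence, the divergences $M_j \to \infty$ and $L_j - M_j \to \infty$ give uniqueness, and the factor claim is handled in both directions exactly as you describe. The only difference is that you spell out the index-chasing for the nesting $\lAngle x^{(j+1)} \rAngle_{M_{j+1}} \subseteq \lAngle x^{(j)} \rAngle_{M_j}$ and the well-definedness $1 \leq M_j \leq L_j$ explicitly, whereas the paper asserts the nesting ``by construction''; your more careful version is entirely appropriate.
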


\begin{proof}
By construction the $\lAngle x^{(j)} \rAngle_{M_n}$ 
form a nested descending sequence of nonempty closed sets 
in the compact space $A^{\mathbb{Z}}$, 
hence their intersection is nonempty. 
Let $\mathbf{x}$ lie in this intersection. 

In $x^{(j)}$, 
there are $M_j - 1 = M_{j-1} + K_{j-1} - 2$ letters strictly to the left of the 
$M_j$th letter, and $L_j - M_j \geq (L_{j-1} - M_{j-1})+1$ 
strictly to the right. Since both these quantities tend to $\infty$, 
for every $i \in \mathbb{Z}$, the $i$th letter of $\mathbf{x}$ 
is uniquely determined by $\mathbf{x} \in \lAngle x^{(j)} \rAngle_{M_j}$, 
provided $j$ is sufficiently large. 
Similarly, for any $i$ and $n$, the $i$th $n$-factor of $\mathbf{x}$ 
lies in $x^{(j)}$ for $j$ sufficiently large, 
so $F_n (\mathbf{x}) \subseteq \bigcup_j F_n (x^{(j)})$. 
Conversely, $\mathbf{x} \in \lAngle x^{(j)} \rAngle_{M_j}$ 
implies $x^{(j)}$ (and hence every $n$-factor of $x^{(j)}$) 
is a factor of $\mathbf{x}$. 
\end{proof}

The utility of Lemma \ref{LimitWordLem} shall be that it allows us 
to an infinite word whose asymptotic features 
(such as the behaviour of the complexity or recurrence functions) 
can be controlled in terms of the sets of factors of the finite words $u^{(n)}$. 

\subsection{Topological full groups and minimal subshifts} \label{TFGSubsect}

Let $\mathbf{C}$ be the Cantor space. 
A \emph{Cantor dynamical system} is a pair $(X,\varphi)$, 
where $X$ is a space homeomorphic to $\mathbf{C}$, 
and $\varphi\in\Homeo(X)$ 
(we specify the space $X$, rather than always taking $X=\mathbf{C}$, 
in case the homeomorphism $\varphi$ 
is described in terms of a particular model $X$ of Cantor space; 
in particular this will be the case when the system is a subshift). 
The system $(X,\varphi)$ is called \emph{minimal} 
if every orbit in $X$ under the action of $\langle \varphi \rangle$ 
is dense in $X$. 

\begin{ex}
Let $A$ be a finite discrete space with $\lvert A \rvert \geq 2$. 
Then $A^{\mathbb{Z}} \cong \mathbf{C}$. 
The \emph{shift} over $A$ is $\sigma \in \Homeo (A^{\mathbb{Z}})$ given, 
for $\mathbf{a} = (a_i)_{i\in \mathbb{Z}}$, 
by $\sigma (\mathbf{a})_i = a_{i+1}$. 
The Cantor dynamical system $(A^{\mathbb{Z}},\sigma)$ is never minimal. 
\end{ex}

\begin{defn}
The \emph{topological full group} $\lBrack \varphi \rBrack$ 
of the system $(X,\varphi)$ 
is the set of all homeomorphisms $g$ of $X$ 
such that there exists a continuous function 
$f_g : X \rightarrow \mathbb{Z}$ 
(called the \emph{orbit cocycle} of $g$) such that 
for all $x \in X$, $g (x) = \varphi ^{f_g (x)} (x)$ 
(here we assume $\mathbb{Z}$ equipped with the discrete topology). 
\end{defn}

Equivalently, $g \in \Homeo (X)$ 
lies in $\lBrack \varphi \rBrack$ if there is 
a finite clopen partition $C_1 , \ldots , C_d$ of $X$ 
and integers $a_1 , \ldots , a_d$ such that for $1 \leq i \leq d$, 
$g|_{C_i} = \varphi^{a_i} |_{C_i}$ 
(taking $\lbrace a_1,\ldots,a_d\rbrace = \im(f_g)$, 
$C_i = f_g ^{-1}(a_i)$). 
It is straightforward to check that $\lBrack \varphi \rBrack$ 
is a subgroup of $\Homeo(X)$. 

\begin{rmrk}
If $(X,\varphi)$ is a minimal system, 
then the orbit cocycle $f_g$ is uniquely determined by 
$g \in \lBrack \varphi \rBrack$, since $\varphi$ has no 
periodic points. 
\end{rmrk}

The next result gives the key source of minimal systems for our purposes. 

\begin{propn} \label{minsubshiftprop}
Suppose $\mathbf{x} \in A^{\mathbb{Z}}$ is uniformly recurrent non-periodic. 
Then $\overline{\mathcal{O}(\mathbf{x})} \cong \mathbf{C}$, 
and the system $(\overline{\mathcal{O}(\mathbf{x})},\sigma)$ is minimal. 
\end{propn}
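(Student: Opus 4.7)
The plan is to establish minimality first and then derive the Cantor-space structure from it. Write $X = \overline{\mathcal{O}(\mathbf{x})}$. For minimality, fix $\mathbf{y} \in X$ and a nonempty relatively open subset $U \subseteq X$. By Remark \ref{cylpartrmrk}(ii) I may shrink $U$ to a cylinder $\lAngle u \rAngle_i \cap X$ for some $u \in A^n$ and $1 \leq i \leq n$. Since this set is nonempty and $\mathcal{O}(\mathbf{x})$ is dense in $X$, some shift $\sigma^j \mathbf{x}$ lies in it, which forces $u \in F_n(\mathbf{x})$. Uniform recurrence then guarantees that $u$ appears as a factor of every $R_{\mathbf{x}}(n)$-factor of $\mathbf{x}$.

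The leverage comes from the following one-sided inclusion: as a limit of shifts of $\mathbf{x}$, the word $\mathbf{y}$ satisfies $F_m(\mathbf{y}) \subseteq F_m(\mathbf{x})$ for every $m$, since any length-$m$ window of $\mathbf{y}$ is witnessed by the corresponding window of a sufficiently close $\sigma^{j_k}\mathbf{x}$. Applied with $m = R_{\mathbf{x}}(n)$ to the $m$-factor of $\mathbf{y}$ beginning at position $0$, this forces a copy of $u$ to appear in $\mathbf{y}$, say at positions $j, j+1, \ldots, j+n-1$. Then $\sigma^{j+i-1}\mathbf{y}$ lies in $\lAngle u \rAngle_i$, producing the desired point of $\mathcal{O}(\mathbf{y}) \cap U$.

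With minimality in hand I verify the standard Brouwer criterion for $X \cong \mathbf{C}$: $X$ is nonempty, compact, metrizable, and totally disconnected as a closed subspace of $A^{\mathbb{Z}}$, so only perfectness requires attention. The non-periodicity of $\mathbf{x}$ makes $\mathcal{O}(\mathbf{x})$ and hence $X$ infinite. The set $I$ of isolated points of $X$ is open and $\sigma$-invariant (since $\sigma$ restricts to a self-homeomorphism of $X$), so $X \setminus I$ is closed and $\sigma$-invariant. Minimality forces $X \setminus I$ to be either empty or all of $X$; the case $I = X$ would make $X$ discrete and compact, hence finite, contradicting infinitude. Therefore $I = \emptyset$ and $X$ is perfect.

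The main technical point is the inclusion $F_m(\mathbf{y}) \subseteq F_m(\mathbf{x})$ combined with uniform recurrence; this is precisely what allows a generic $\mathbf{y} \in X$ to inherit the dense-occurrence property from $\mathbf{x}$ without needing to verify the reverse inclusion. Once this link is in place, both conclusions follow routinely, with non-periodicity of $\mathbf{x}$ being the only extra ingredient needed for perfectness.
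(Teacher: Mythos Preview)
Your proof is correct. The perfectness argument is the same as the paper's, phrased via the open invariant set $I$ of isolated points rather than its complement $\mathrm{Acc}(X)$; both invoke minimality to conclude that the closed invariant set of accumulation points is all of $X$, ruling out the degenerate case by infinitude.

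The only real difference is in the treatment of minimality: the paper simply cites \cite[Theorem~1.0.1]{Jus}, whereas you supply a self-contained argument via the one-sided inclusion $F_m(\mathbf{y}) \subseteq F_m(\mathbf{x})$ combined with uniform recurrence. This is the standard proof (and is what the cited reference does), so it is not a genuinely different route, but it does make the exposition independent of the external reference. Note incidentally that your inclusion $F_m(\mathbf{y}) \subseteq F_m(\mathbf{x})$ is half of what the paper records later as Remark~\ref{minseqindeprmrk}; you are right that only this direction is needed here.
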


\begin{proof}
For minimality, see \cite[Theorem~1.0.1]{Jus}. To prove $\overline{\mathcal{O}(\mathbf{x})} \cong \mathbf{C}$, note that the only property not automatically inherited by subspaces of $\mathbf{C}$ is being perfect; then, if $Y=\overline{\mathcal{O}(\mathbf{x})}$, a non-periodic $\mathbf{x}$ must have $|Y|=\infty$, which implies that $Y$ has an accumulation point in $\mathbf{C}$: but $Y$ contains its set of accumulation points $\mathrm{Acc}(Y)$ in $\mathbf{C}$ (by compactness), and $\mathrm{Acc}(Y)$ is closed and $\sigma$-invariant, so by minimality $Y=\mathrm{Acc}(Y)$.
\end{proof}

\begin{defn}
A subspace $X = \overline{\mathcal{O}(\mathbf{x})} \subseteq A^{\mathbb{Z}}$ 
constructed as in the statement of Proposition \ref{minsubshiftprop} 
is called a \emph{minimal subshift}. 
\end{defn}

\begin{rmrk} \label{minseqindeprmrk}
Suppose $\mathbf{x} \in A^{\mathbb{Z}}$ is uniformly recurrent non-periodic. 
For any $\mathbf{y}\in\overline{\mathcal{O}(\mathbf{x})}$ and $n\in\mathbb{N}$, 
$F_n (\mathbf{x}) = F_n (\mathbf{y})$. 
In particular, $p_{\mathbf{x}}$ and $R_{\mathbf{x}}$ depend only on $X$, 
and we may henceforth write $p_X$ or $R_X$ instead. 
\end{rmrk}

Let $\lBrack \varphi \rBrack^{\prime}$ denote 
the derived subgroup of $\lBrack \varphi \rBrack$. 
The reason for our focus on minimal subshifts, 
among all minimal systems, is made clear by the next result. 

\begin{thm}[\cite{Mat} Theorem 5.4] \label{Matuithm}
For $(X,\varphi)$ a Cantor minimal system, 
$\lBrack \varphi \rBrack^{\prime}$ is a finitely generated group 
iff $(X,\varphi)$ is isomorphic to a minimal subshift. 
\end{thm}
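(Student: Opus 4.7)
The plan is to prove the two implications separately. The ``if'' direction requires constructing an explicit finite generating set for $\lBrack \sigma \rBrack^{\prime}$, while the ``only if'' direction extracts a subshift structure from the finite generation hypothesis.

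For the ``if'' direction, I would follow Matui's constructive approach. Given a minimal subshift $(X,\sigma)$ over a finite alphabet $A$, I would build a candidate finite generating set consisting of elementary moves obtained from pairs of disjoint cylinders $C, C' = \sigma^k(C)$ of $X$ for small radii and offsets; since $A$ is finite, only finitely many such pieces arise. The hard work is to show these generate $\lBrack \sigma \rBrack^{\prime}$: any element of $\lBrack \sigma \rBrack$ has a cocycle constant on a finite refinement of cylinders, and iteratively decomposing the associated clopen permutation into shift-related elementary pieces, together with the simplicity of $\lBrack \sigma \rBrack^{\prime}$, should reduce the general case to the chosen finite set.

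For the ``only if'' direction, suppose $\Gamma := \lBrack \varphi \rBrack^{\prime}$ is generated by $g_1,\ldots,g_k$. Each $g_i$ has a continuous orbit cocycle $f_{g_i} : X \to \mathbb{Z}$ with finite image, yielding a finite clopen partition $\mathcal{P}_i$ on which $g_i$ acts as a single power of $\varphi$. Let $\mathcal{Q}$ be the common refinement of $\mathcal{P}_1, \ldots, \mathcal{P}_k$, and define the coding map $\pi : X \to \mathcal{Q}^{\mathbb{Z}}$ by $\pi(x)_n = Q$ iff $\varphi^n(x) \in Q$. Then $\pi$ is continuous and $\varphi$-equivariant (viewing $\mathcal{Q}^{\mathbb{Z}}$ with the shift), and $\pi(X)$ is a minimal shift-invariant closed subset of $\mathcal{Q}^{\mathbb{Z}}$, i.e., a minimal subshift. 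It remains to show $\pi$ is injective, which would identify $(X,\varphi)$ with $(\pi(X),\sigma)$.

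The hard part will be proving injectivity of $\pi$. Suppose for contradiction that $\pi(x) = \pi(y)$ with $x \neq y$. Using the cocycle identity $f_{gh}(x) = f_h(x) + f_g(\varphi^{f_h(x)}(x))$ and $\varphi$-equivariance of $\pi$, an induction on word length in the generators yields $f_g(x) = f_g(y)$ for every $g \in \Gamma$; the base case holds because $\mathcal{Q}$ refines each $\mathcal{P}_i$. Hence $g(x) = x$ if and only if $g(y) = y$, so the stabilisers $\Gamma_x$ and $\Gamma_y$ coincide. I would then choose a clopen neighborhood $U$ of $x$ with $y \notin U$, and invoke the ``flexibility'' of $\lBrack \varphi \rBrack^{\prime}$ on minimal systems to find $g \in \Gamma$ supported on $U$ with $g(x) \neq x$; such $g$ can be built from $\Alt(5)$-type pieces acting on disjoint shift-related cylinders inside $U$, along the lines used in the proof of Theorem~\ref{LBmainthm}. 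Since $g$ fixes $y$ but moves $x$, we contradict $\Gamma_x = \Gamma_y$, and injectivity follows.
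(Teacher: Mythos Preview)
The paper does not give its own proof of this statement; it is quoted from \cite{Mat} (Theorem~5.4 there) and used as a black box. There is therefore nothing in the present paper to compare your proposal against.

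On the proposal itself: your ``only if'' direction has the right architecture---code via the common refinement $\mathcal{Q}$ of the generators' level sets, then prove injectivity by a stabiliser argument---and the induction giving $f_g(x)=f_g(y)$ for all $g\in\Gamma$ from $\pi(x)=\pi(y)$ is correct. The final step, however, has a real gap. The $\Alt(5)$-type elements built in Section~\ref{LBSection} are supported on $\bigcup_{\lvert i\rvert\le 2}\varphi^{i}(V)$, not on $V$ itself; there is no reason for $\varphi^{\pm 1}(x),\varphi^{\pm 2}(x)$ to lie in your clopen neighbourhood $U$ of $x$, so ``shift-related cylinders inside $U$'' is not what that construction produces, and the resulting element need not fix $y$. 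A correct way to manufacture $g\in\lBrack\varphi\rBrack'$ with $\supp(g)\subseteq U$ and $g(x)\neq x$ is to pass to the induced (first-return) system $(U,\varphi_U)$, observe that extending by the identity embeds $\lBrack\varphi_U\rBrack$ into $\lbrace g\in\lBrack\varphi\rBrack:\supp(g)\subseteq U\rbrace$, and build the $3$-cycle there; this is an additional, nontrivial step you have not supplied.

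Your ``if'' direction is only a gesture toward a proof. Invoking simplicity of $\lBrack\sigma\rBrack'$ does not by itself yield finite generation, and ``iteratively decomposing the associated clopen permutation into shift-related elementary pieces'' hides essentially all of the content: Matui's argument proceeds through an explicit finite generating set and a concrete combinatorial reduction (via the index map and generation of $\lBrack\sigma\rBrack_0$), none of which you have indicated.
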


\begin{thm}[\cite{GrigMedy} Proposition 2.4]
Let $(X,\sigma)$ be a minimal subshift, 
and let $S$ be a finite generating set for $\lBrack \sigma \rBrack^{\prime}$. 
Then $\lvert B_S(n) \rvert \succeq \exp (n)$. 
\end{thm}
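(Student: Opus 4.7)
The plan is to embed a subgroup of exponential word growth into $\lBrack \sigma \rBrack^{\prime}$ in a way that controls the $S$-length of its elements. Concretely, I would produce, for every $k \geq 1$, a family $g_0, g_1, \ldots, g_{k-1}$ of $k$ pairwise commuting involutions in $\lBrack \sigma \rBrack^{\prime}$ with pairwise disjoint non-empty supports, such that all $2^k$ subset products lie in $B_S(Ck)$ for a constant $C$ independent of $k$. Since commuting involutions with disjoint supports generate a subgroup $\cong (\mathbb{Z}/2)^k$, this immediately yields $2^k$ distinct elements in $B_S(Ck)$, whence $\exp(n) \preceq \lvert B_S(n) \rvert$.

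To build the family I would pick a non-identity involution $g_0 \in \lBrack \sigma \rBrack^{\prime}$ supported in a small clopen set $U \subseteq X$ (for instance, a product of two disjoint transpositions of cylinders along the $\sigma$-orbit, which is even and hence in the derived subgroup), together with an element $s \in \lBrack \sigma \rBrack^{\prime}$ of bounded $S$-length whose orbit cocycle satisfies $f_s \equiv d$ on a clopen region containing the forward translates $\sigma^{id}(U)$ for all $0 \leq i < k$. By non-periodicity of $(X,\sigma)$, $U$ can be chosen small enough that these translates are pairwise disjoint. Setting $g_i := s^i g_0 s^{-i}$, the conjugates have disjoint supports $\sigma^{id}(\supp g_0)$ and commute pairwise. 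The length bound comes from the telescoping identity
\[
g_{i_1} g_{i_2} \cdots g_{i_m} = s^{i_1} g_0 s^{i_2 - i_1} g_0 s^{i_3 - i_2} g_0 \cdots g_0 s^{-i_m} \qquad (0 \leq i_1 < \cdots < i_m < k),
\]
which has $S$-length at most $2 i_m \lvert s \rvert_S + m \lvert g_0 \rvert_S \leq (2\lvert s \rvert_S + \lvert g_0 \rvert_S)\,k$.

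The main obstacle is the construction of the shift-like element $s$. Since $\sigma$ itself is generally not in $\lBrack \sigma \rBrack^{\prime}$ (it represents a non-trivial class in the index map $\lBrack \sigma \rBrack \rightarrow \mathbb{Z}$ defined by integrating the orbit cocycle against an invariant measure), $s$ must compensate its $\sigma^d$-action on the region containing all $\sigma^{id}(U)$ by acting as other powers of $\sigma$ on the complement. Existence of such an $s$ of bounded $S$-length, uniformly as $k$ grows, requires that the region $\{f_s = d\}$ contain arbitrarily long forward $\sigma^d$-segments of $U$, which one arranges by exploiting the structural description of $\lBrack \sigma \rBrack^{\prime}$ as being generated by permutations of clopen sets together with the flexibility of minimal Cantor dynamics. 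If necessary, one may allow $U$, $g_0$, and $s$ themselves to be replaced by suitable variants as $k$ grows, provided their $S$-lengths remain bounded uniformly in $k$; this careful construction is the technical content of \cite{GrigMedy}, Proposition 2.4.
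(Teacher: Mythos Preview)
The paper does not give its own proof of this statement; it is quoted as Proposition~2.4 of \cite{GrigMedy} and used as a black box. So there is no in-paper argument to compare against, and I evaluate your proposal on its own merits.

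Your overall strategy --- place $2^k$ distinct elements inside $B_S(Ck)$ via commuting involutions --- is reasonable, but the concrete construction you outline runs into a genuine obstruction. You ask for $g_0, s \in \lBrack\sigma\rBrack'$ whose $S$-lengths are bounded \emph{uniformly in $k$}, such that the sets $s^i(\supp g_0)$, $0 \leq i < k$, are pairwise disjoint. There are only finitely many pairs $(g_0, s)$ with $\lvert g_0 \rvert_S, \lvert s \rvert_S \leq C$. Now every element of $\lBrack\sigma\rBrack$ preserves any $\sigma$-invariant Borel probability measure $\mu$ on $X$ (it acts locally by powers of $\sigma$), and by minimality every nonempty clopen set has positive $\mu$-measure. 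Hence for each fixed pair, at most $\lfloor 1/\mu(\supp g_0) \rfloor$ of the conjugate supports can be disjoint. Maximising over the finitely many admissible pairs gives a uniform bound on $k$, so your scheme cannot produce arbitrarily many disjointly supported conjugates while keeping $\lvert g_0 \rvert_S$ and $\lvert s \rvert_S$ bounded. Allowing $U$, $g_0$, $s$ to vary with $k$ does not help so long as their $S$-lengths stay bounded, which is exactly what your telescoping length estimate requires.

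Your final sentence defers the resolution to \cite{GrigMedy}, but the obstruction above shows that the literal mechanism you describe --- disjoint-support conjugates of a bounded-length involution by powers of a bounded-length element --- cannot be what is carried out there. A correct argument must either drop the disjoint-support hypothesis (and then supply a different reason the $g_i$ generate $(\mathbb{Z}/2)^k$), or abandon the bounded-length conjugation scheme altogether, e.g.\ by exhibiting a free sub-semigroup or embedding a fixed finitely generated group of exponential growth. As it stands, the proposal correctly locates the main difficulty but does not overcome it.
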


\begin{coroll}
We have $\mathcal{L}_{\lBrack \sigma \rBrack^{\prime}}(n) \succeq \exp(n)$. 
\end{coroll}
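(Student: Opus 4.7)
The plan is to exploit the injectivity of local embeddings to convert any lower bound on word growth into a lower bound on LEF growth. Specifically, if $\phi:B_S(n)\rightarrow Q$ is a local embedding, then by definition $\phi$ is injective as a function, so $\lvert Q \rvert \geq \lvert \phi(B_S(n)) \rvert = \lvert B_S(n) \rvert$. Taking the infimum over all finite groups $Q$ admitting such a $\phi$, we obtain the pointwise inequality $\mathcal{L}_{\Gamma}^S (n) \geq \lvert B_S(n) \rvert$ for every finitely generated LEF group $\Gamma = \langle S \rangle$.

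Applying this general observation to $\Gamma = \lBrack \sigma \rBrack^{\prime}$ with its chosen finite generating set $S$, and invoking the immediately preceding theorem of Grigorchuk--Medynets that $\lvert B_S(n) \rvert \succeq \exp(n)$, the conclusion follows at once:
\begin{equation*}
\mathcal{L}_{\lBrack \sigma \rBrack^{\prime}}(n) \geq \lvert B_S (n) \rvert \succeq \exp(n)\text{.}
\end{equation*}

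There is essentially no obstacle here; the only thing to verify is that the preorder $\succeq$ is preserved when passing from an inequality of functions to the induced asymptotic comparison, which is immediate from Definition \ref{FunctionCompDefn}. Independence from the choice of generating set $S$ is handled by the lemma asserting $\mathcal{L}_\Gamma^S \approx \mathcal{L}_\Gamma^T$ for any two finite generating sets, so we may suppress $S$ in the statement without ambiguity.
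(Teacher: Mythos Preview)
Your argument is correct and is essentially identical to the paper's own proof: both observe that injectivity of a local embedding $\phi:B_S(n)\to Q$ forces $\lvert Q\rvert\geq\lvert B_S(n)\rvert$, and then invoke the preceding Grigorchuk--Medynets exponential word-growth bound. Your additional remarks on the preorder and generating-set independence are harmless elaborations of what the paper leaves implicit.
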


\begin{proof}
This is immediate from the preceding Theorem: 
if $\phi: B_S (n) \rightarrow Q$ is a local embedding, 
then $\phi$ is injective, so $\lvert Q \rvert \geq \lvert B_S(n) \rvert$. 
\end{proof}

Thus our Theorem \ref{LBmainthm} is only new in 
the case $p_X (n) \npreceq n^2$. 

\section{Construction of local embeddings} \label{UBSection}

In this Section we prove Theorem \ref{UBmainthm}. 
Let $(X,\sigma)$ be a minimal subshift over the alphabet $A$. 

\begin{propn} \label{linboundpropn}
Let $S \subseteq \lBrack\sigma\rBrack$ be finite. 
Then there exists an integer 
$C_1 = C_1(S) \geq 1$ such that for all 
$r \in \mathbb{N}$ and all $g \in B_S (r)$: 
\begin{itemize}
\item[(i)] $\max\lbrace\lvert f_g (x)\rvert : x\in X\rbrace\leq C_1 r$; 

\item[(ii)] For all $m \geq C_1 r$, 
$f_g$ is constant on every $m$-cylinder of $X$. 
\end{itemize}
\end{propn}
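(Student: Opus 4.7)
The plan is to reduce both statements to uniform estimates on the cocycles $f_s$ for the generators $s \in S \cup S^{-1}$, and then extend to arbitrary words via the cocycle identity. For each $s \in S \cup S^{-1}$, continuity of $f_s: X \to \mathbb{Z}$ together with compactness of $X$ forces $\mathrm{im}(f_s)$ to be finite, so there is a uniform bound $|f_s(x)| \leq M_s$. Moreover, each level set $f_s^{-1}(a)$ is a clopen subset of $X$; as in Remark \ref{cylpartrmrk}(ii), such a set is a disjoint union of $m$-cylinders of $X$ for all sufficiently large $m$, and in particular there exists $N_s$ such that $f_s$ is constant on every $N_s$-cylinder of $X$. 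Setting $M = \max_s M_s$, $N = \max_s N_s$, and $C_1 := \max(M, N)$, these are the constants that will drive the rest of the argument.

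For part (i), the orbit cocycle obeys the identity $f_{gh}(x) = f_g(h(x)) + f_h(x)$, which follows from the uniqueness of the cocycle on the minimal subshift. Writing $g = s_1 \cdots s_{r'} \in B_S(r)$ with $r' \leq r$ and iterating, one obtains
\begin{equation*}
f_g(x) = \sum_{i=1}^{r'} f_{s_i}\bigl(s_{i+1} \cdots s_{r'} \cdot x\bigr),
\end{equation*}
and each term is bounded in absolute value by $M$, giving $|f_g(x)| \leq r' M \leq C_1 r$.

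For part (ii), I propose a downward induction on $i$. Suppose $x, x'$ lie in a common $m$-cylinder of $X$ with $m \geq C_1 r$. Set $y_i := s_i s_{i+1} \cdots s_{r'}(x)$ and $y'_i := s_i s_{i+1} \cdots s_{r'}(x')$, with $y_{r'+1} = x$ and $y'_{r'+1} = x'$. The inductive claim is that $y_i, y'_i$ lie in a common $(m - (r' - i + 1)M)$-cylinder of $X$, and that $f_{s_i}(y_{i+1}) = f_{s_i}(y'_{i+1})$. The geometric ingredient is that if $x, x'$ agree on coordinates $-m, \ldots, m$, then for any integer $k$ with $|k| \leq M$ the points $\sigma^k(x)$ and $\sigma^k(x')$ agree on coordinates $-(m-|k|), \ldots, m-|k|$, so applying a common shift of size at most $M$ decreases the common cylinder size by at most $M$. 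At each stage the local constancy of $f_{s_i}$ produces the desired equality of cocycle values, provided the current cylinder size is at least $N$; the inequality $m \geq C_1 r \geq N + (r - 1)M$, guaranteed by $C_1 \geq \max(M, N)$, ensures the induction runs all the way down to $i = 1$. Summing the equal terms via the formula from part (i) then yields $f_g(x) = f_g(x')$. The only real subtlety is the bookkeeping of cylinder sizes under iterated shifting; once one observes that a common shift by a bounded amount preserves being in a common cylinder (of slightly smaller but controlled radius), everything else is routine.
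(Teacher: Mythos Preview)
Your proof is correct and follows essentially the same approach as the paper's. Both arguments choose $C_1$ large enough that each $f_s$ (for $s \in S \cup S^{-1}$) is bounded by $C_1$ and constant on $C_1$-cylinders, prove (i) via the cocycle identity $f_{gh}(x)=f_g(h(x))+f_h(x)$, and prove (ii) by inducting on the word length using the observation that applying a shift of size at most $C_1$ reduces the common cylinder radius by at most $C_1$; the only cosmetic difference is that the paper phrases the induction setwise (showing $h(U)$ lies in a single $(m-|i|)$-cylinder, then applies constancy of $f_s$ there), whereas you track two points $x,x'$ through the word and compare cocycle values termwise.
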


\begin{proof}
For (i), for $g \in \lBrack\sigma\rBrack$ write 
$\lambda (g) = \max\lbrace\lvert f_g (x)\rvert : x\in X\rbrace$. 
Since $S$ is finite, we may choose 
$C_1 \geq \max \lbrace \lambda(s):s\in S \rbrace$. 
For $g,h\in \lBrack\sigma\rBrack$, 
$\lambda(gh) \leq \lambda(g)+\lambda(h)$, 
so by induction $\lambda(g) \leq C_1 r$ for all $g \in B_S (r)$. 

For (ii), for every $s \in S \cup S^{-1}$ there is a finite 
clopen partition $\mathcal{C}_s$ of $X$, 
such that $f_s$ is constant on the parts of $\mathcal{C}_s$. 
By Remark \ref{cylpartrmrk} (ii), we may choose $C_1$ sufficiently 
large that for all $s \in S \cup S^{-1}$ and $m \geq C_1$, 
$f_s$ is constant on every $m$-cylinder of $X$. 
Let $r \geq 2$ and suppose by induction that the claim 
holds for smaller $r$. Let $m \geq C_1 r$, 
let $U$ be an $m$-cylinder of $X$, 
and let $g \in B_S (r)$. 
Then there exist $h\in B_S (r-1)$ and $s\in S\cup S^{-1}\cup \lbrace e\rbrace$ 
such that $g=sh$. 
By inductive hypothesis, $f_h$ is constant on $U$, say with value $i$.
Observe that, for any $k,l \in \mathbb{N}$, 
if $x,y \in X$ lie in the same $(k+l)$-cylinder of $X$, 
then $\sigma^{\pm l} x , \sigma^{\pm l} y$ 
lie in the same $k$-cylinder of $X$.
Thus, there is a unique $(m-|i|)$-cylinder
containing $h(U)$. By (i), $i \in [-C_1 (r-1) , C_1 (r-1)]$, so $m-|i|\geq C_{1}$:
this implies that $f_s$ is constant on $h(U)$, 
hence $f_g$ is constant on $U$. 
\end{proof}

\begin{rmrk} \label{shiftcylindersrmrk}
By Proposition~\ref{linboundpropn} (i), for $k\in\mathbb{N}$ and $m \geq C_1 r$, 
if $x,y \in X$ lie in the same $(k+m)$-cylinder of $X$, 
then $g(x),g(y)$ lie in the same $k$-cylinder of $X$ 
for any $g \in B_S (r)$. 
\end{rmrk}

\begin{lem} \label{intervalconstrlem}
Let $C_1$ be as in Proposition \ref{linboundpropn}. 
For all $r \in \mathbb{N}$, 
there exists $M = M(r) \in \mathbb{N}$ satisfying: 
\begin{itemize}
\item[(i)] $\lbrace \sigma^i \mathbf{x} : 1\leq i\leq M \rbrace$ 
intersects every $C_1(r+1)$-cylinder of $X$; 
\item[(ii)] $\mathbf{x}$ and $\sigma^M \mathbf{x}$ lie in the same 
$C_1(2r+1)$-cylinder of $X$; 
\item[(iii)] $10 C_1 r \leq M \leq 2 R_X (10 C_1 r)$. 
\end{itemize}
\end{lem}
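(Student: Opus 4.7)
The plan is to construct $M$ directly as the (adjusted) position of a chosen recurrence in $\mathbf{x}$, with the window in which we look for this recurrence positioned so that (ii) and (iii) hold by design and (i) follows from the size of $M$ alone.

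First I would set $n_0 = 2C_1(2r+1)+1$ and $n_1 = 2C_1(r+1)+1$, the lengths of the central blocks parametrising the $C_1(2r+1)$- and $C_1(r+1)$-cylinders of $X$, and put $w_0 = x_{-C_1(2r+1)}\cdots x_{C_1(2r+1)}$, the $n_0$-factor of $\mathbf{x}$ centred at $0$. For $r\geq 1$ one verifies $n_0, n_1 \leq 10 C_1 r$, and hence (by monotonicity of $R_X$) $R_X(n_0), R_X(n_1) \leq R_X(10 C_1 r)$. Setting $M_0 = R_X(10 C_1 r)$, Theorem~\ref{MHThm} gives $M_0 \geq 20 C_1 r + 1$. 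I would then apply Definition~\ref{RecurrenceDefn} to the window of $\mathbf{x}$ of length $R_X(n_0)$ starting at position $M_0 - C_1(2r+1)$: this window must contain $w_0$ as a factor at some position $p$ with $M_0 - C_1(2r+1) \leq p \leq M_0 - C_1(2r+1) + R_X(n_0) - n_0$. I would define $M = p + C_1(2r+1)$, so that $M_0 \leq M \leq M_0 + R_X(n_0) - n_0$.

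The verifications are then routine. For~(ii), the identity $x_{p+j}=(w_0)_j$ over $0 \leq j \leq n_0 - 1$ reads, after substituting $i = j - C_1(2r+1)$, as $x_{M+i}=x_i$ for $|i|\leq C_1(2r+1)$, which says $\mathbf{x}$ and $\sigma^M\mathbf{x}$ lie in the same $C_1(2r+1)$-cylinder. For~(iii): $M\geq M_0\geq 20 C_1 r + 1 \geq 10 C_1 r$, and $M \leq M_0 + R_X(n_0) \leq 2 R_X(10 C_1 r)$, using $R_X(n_0)\leq M_0$. For~(i): by Remark~\ref{minseqindeprmrk} the $C_1(r+1)$-cylinders of $X$ are indexed by $F_{n_1}(\mathbf{x})$, and $\sigma^i\mathbf{x}$ lies in the cylinder labelled by $u$ iff $u$ occurs at position $i - C_1(r+1)$ in $\mathbf{x}$; thus (i) reduces to showing every element of $F_{n_1}(\mathbf{x})$ occurs somewhere in the window $[1 - C_1(r+1),\, M + C_1(r+1)]$, of length $M + 2 C_1(r+1) \geq M_0 \geq R_X(n_1)$, which is exactly what uniform recurrence delivers.

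The main (rather minor) obstacle is the simultaneous balancing of the upper bound in~(iii) against the implicit requirement from~(i) that $M + 2C_1(r+1) \geq R_X(n_1)$: both must be controlled by a single quantity. The choice $M_0 = R_X(10 C_1 r)$ threads this needle precisely because $n_0, n_1 \leq 10 C_1 r$ forces $R_X(n_0)$ and $R_X(n_1)$ to be dominated by $M_0$, leaving just enough slack above $M_0$ for $w_0$ to recur before overshooting $2 R_X(10 C_1 r)$.
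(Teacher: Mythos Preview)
Your proof is correct and follows essentially the same approach as the paper's: both locate $M$ as the position of a recurrence of the central block $w_0$ of length $n_0 = 2C_1(2r+1)+1$, placed in a window chosen so that (ii) holds by construction, with (i) following from the size of $M$ and (iii) from the recurrence estimates $n_0,n_1\leq 10C_1 r$. Your choice of starting threshold $M_0 = R_X(10C_1 r)$ is slightly tidier than the paper's (which searches from $10C_1 r$ and bounds $M\leq R_X(n_0)+10C_1 r$), since via Theorem~\ref{MHThm} and monotonicity of $R_X$ it simultaneously forces $M\geq 10C_1 r$ and $M\geq R_X(n_1)$, making (i) and the lower half of (iii) immediate; but the substance of the argument is the same.
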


\begin{proof}
Let $u \in A^{2C_1(r + 1)+1}$. 
$\lAngle u \rAngle_{C_1(r + 1)+1}\cap X$ 
is a $C_1(r + 1)$-cylinder of $X$ iff 
$u \in F_{2C_1(r + 1)+1} (\mathbf{x})$. 
Therefore (i) holds for any $M \geq R_X (2C_1(r + 1)+1)$. 
For the same reason, a value of $M$ satisfying (ii) 
occurs at least once in every interval 
of length $R_X (2 C_1(2r+1)+1)$. 
Therefore we can choose $M \leq R_X (2 C_1(2r+1)+1) + 10 C_1 r$ 
satisfying (i), (ii) and the first inequality of (iii). 
Since $10 C_1 r \leq R_X (5 C_1 r)$ by Theorem \ref{MHThm}, 
and since $R_X$ is nondecreasing, 
the second inequality of (iii) also holds. 
\end{proof}

\begin{coroll} \label{intervalendptscor}
Let $M$ be as in Lemma \ref{intervalconstrlem}. 
For any $\lvert i \rvert \leq C_1 r$, 
$\sigma^i \mathbf{x}$ and $\sigma^{M+i} \mathbf{x}$ lie in the same 
$C_1(r+1)$-cylinder of $X$. 
\end{coroll}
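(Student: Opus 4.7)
The plan is to unwind Lemma~\ref{intervalconstrlem}(ii) coordinatewise and observe that the asserted agreement window for $\sigma^i\mathbf{x}$ and $\sigma^{M+i}\mathbf{x}$ is a straightforward subinterval of the one furnished by that lemma. Since two elements $\mathbf{y},\mathbf{z}\in X$ lie in a common $L$-cylinder of $X$ precisely when $\mathbf{y}_j=\mathbf{z}_j$ for all $|j|\leq L$, Lemma~\ref{intervalconstrlem}(ii) translates to the assertion that $\mathbf{x}_j=\mathbf{x}_{M+j}$ for every $|j|\leq C_1(2r+1)$.

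Now fix $i$ with $|i|\leq C_1 r$. For each $k$ satisfying $|k|\leq C_1(r+1)$, the $k$th coordinates of $\sigma^i\mathbf{x}$ and $\sigma^{M+i}\mathbf{x}$ are $\mathbf{x}_{i+k}$ and $\mathbf{x}_{M+i+k}$ respectively, and the triangle inequality yields $|i+k|\leq C_1 r+C_1(r+1)=C_1(2r+1)$, so the previous paragraph forces these coordinates to agree. This is exactly the statement that $\sigma^i\mathbf{x}$ and $\sigma^{M+i}\mathbf{x}$ sit in a common $C_1(r+1)$-cylinder of $X$. Equivalently, one could cite the observation inside the proof of Proposition~\ref{linboundpropn} that $\sigma^{\pm\ell}$ maps a $(k+\ell)$-cylinder into a $k$-cylinder, applied here with $k=C_1(r+1)$ and $\ell=|i|\leq C_1 r$.

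I do not foresee any genuine obstacle: the corollary is essentially the triangle inequality in disguise, and indeed the constant $C_1(2r+1)$ chosen in clause (ii) of Lemma~\ref{intervalconstrlem} was engineered precisely so that this shifting argument delivers the stronger conclusion uniformly across the window $|i|\leq C_1 r$, which is exactly the range needed in the subsequent construction of the local embedding.
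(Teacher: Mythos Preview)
Your proof is correct and is essentially the same as the paper's: the paper simply cites Lemma~\ref{intervalconstrlem}(ii) together with Remark~\ref{shiftcylindersrmrk} (which packages the observation from the proof of Proposition~\ref{linboundpropn} that $\sigma^{\pm\ell}$ carries a $(k+\ell)$-cylinder into a $k$-cylinder), whereas you spell out the same argument at the level of coordinates via the triangle inequality.
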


\begin{proof}
This is immediate from Lemma \ref{intervalconstrlem} (ii), 
by Remark \ref{shiftcylindersrmrk}. 
\end{proof}

Fix $\mathbf{x} \in X$ and let $\mathcal{O}=\mathcal{O}(\mathbf{x})$ 
be the orbit of $\mathbf{x}$ under $\sigma$. 
Then $\lBrack \sigma \rBrack$ acts on $\mathcal{O}$. 
Since $X$ has no periodic points, 
this action induces a well-defined homomorphism 
$\phi : \lBrack \sigma \rBrack \rightarrow \Sym (\mathbb{Z})$ by: 
\begin{equation}
g\big(\sigma^n \mathbf{x}\big)=\sigma^{\phi(g)[n]} \big( \mathbf{x}\big)
\text{.}
\end{equation}
In other words, $\phi(g) [n]  = n + f_{g}(\sigma^n \mathbf{x})$. 
Since $\mathcal{O}$ is dense in $X$, $\phi$ is injective. 

Let $\Gamma \leq \lBrack \sigma \rBrack$ be finitely generated, 
and let $\mathbf{S} = (s_1,\ldots,s_d)$ be an ordered 
generating $d$-tuple of elements of $\Gamma$. 
Then $\Gamma$ acts faithfully on $\mathbb{Z}$ via $\phi$. 
Let $G = \Schr(\Gamma,\mathbb{Z},\mathbf{S})$ be the associated Schreier graph 
of the action of $\Gamma$ on $\mathbb{Z}$ with respect to $\mathbf{S}$. 
Let $C_1 = C_1 (\lbrace s_1 , \ldots,s_d \rbrace)\geq 1$ 
be as in Proposition \ref{linboundpropn}. 

\begin{lem} \label{BallSchrCylLem1}
For $n \in \mathbb{Z}$ and $r \in \mathbb{N}$, 
the isomorphism type of $B_G(n,r) \subseteq G$ 
(as a based, edge-coloured graph) 
depends only on the $C_1 (r + 1)$-cylinder of $X$ 
in which $\sigma^n \mathbf{x}$ lies. 
\end{lem}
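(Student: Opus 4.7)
The plan is to exhibit an explicit isomorphism of based coloured graphs between $B_G(n,r)$ and $B_G(n',r)$ whenever $\sigma^n \mathbf{x}$ and $\sigma^{n'}\mathbf{x}$ lie in the same $C_1(r+1)$-cylinder of $X$, namely the translation $n+k \mapsto n'+k$. The key observation is that all edge data defining $B_G(n,r)$ is read off from the values $f_{s_i}(\sigma^{n+k}\mathbf{x})$ for $|k| \leq C_1 r$ and $1 \leq i \leq d$, and these values only depend on the $C_1(r+1)$-cylinder of $\sigma^n \mathbf{x}$.

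First I would note that every vertex of $B_G(n,r)$ has the form $n+k$ for some integer $k$ with $|k| \leq C_1 r$: this is immediate from the description $\phi(g)[n]=n+f_g(\sigma^n\mathbf{x})$ together with Proposition~\ref{linboundpropn}~(i), applied to elements $g$ obtained as partial products of at most $r$ letters from $\mathbf{S}\cup\mathbf{S}^{-1}$. Next, I would observe that in $G$ there is a directed edge of colour $i$ from an integer $m$ to $m'$ iff $m' = m + f_{s_i}(\sigma^m \mathbf{x})$; so the colour-$i$ out-neighbour of $m$ in $G$ is entirely determined by the single integer $f_{s_i}(\sigma^m \mathbf{x})$.

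The crucial step is then to see that, for $|k|\leq C_1 r$ and any $s \in S\cup S^{-1}$, the value $f_s(\sigma^{n+k}\mathbf{x})$ depends only on the $C_1(r+1)$-cylinder of $\sigma^n\mathbf{x}$. Indeed, by Proposition~\ref{linboundpropn}~(ii) (applied with $r=1$ and $m=C_1$), $f_s$ is constant on every $C_1$-cylinder of $X$, and hence $f_s(\sigma^{n+k}\mathbf{x})$ is determined by the letters of $\sigma^{n+k}\mathbf{x}$ in positions $-C_1,\ldots,C_1$, i.e.\ by the letters of $\sigma^n\mathbf{x}$ in positions $k-C_1,\ldots,k+C_1$. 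Since $|k|\leq C_1 r$, these positions all lie in $[-C_1(r+1), C_1(r+1)]$, so they are encoded in the $C_1(r+1)$-cylinder of $\sigma^n\mathbf{x}$.

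From here, the argument finishes essentially formally: writing $j = n'-n$, the bijection $n+k\mapsto n'+k$ on the interval $[n-C_1 r, n+C_1 r]$ commutes with the colour-$i$ edge operation (the edge $m\to m+f_{s_i}(\sigma^m\mathbf{x})$ is mapped to $m+j\to m+j+f_{s_i}(\sigma^{m+j}\mathbf{x})$, and the two $f_{s_i}$ values agree by the previous paragraph), and so a straightforward induction on path-length shows that $n+k \in B_G(n,r)$ if and only if $n'+k\in B_G(n',r)$ and that the induced bijection is a based, colour-preserving graph isomorphism. The only subtlety worth checking carefully is that the correspondence of colour-$i$ edges is preserved under taking inverse edges as well, which is automatic since undirected adjacency is a symmetric consequence of the directed edge data already controlled. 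No real obstacle should arise beyond keeping the index bookkeeping $|k|\leq C_1 r$ versus the shift radius $C_1$ straight, which is exactly what the choice $C_1(r+1)$ is calibrated for.
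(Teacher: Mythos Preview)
Your proposal is correct and follows essentially the same approach as the paper: both arguments rest on the observation that vertices of $B_G(n,r)$ lie in $[n-C_1 r,\,n+C_1 r]$ and that the colour-$c$ edge data at $n+k$ is determined by the $C_1$-cylinder of $\sigma^{n+k}\mathbf{x}$, hence by the $C_1(r+1)$-cylinder of $\sigma^n\mathbf{x}$. The only organisational difference is that the paper establishes vertex membership directly (noting $n+i\in B_G(n,r)$ iff $f_g(\sigma^n\mathbf{x})=i$ for some $g\in B_{\mathbf{S}}(r)$, which depends on the $C_1 r$-cylinder by Proposition~\ref{linboundpropn}~(ii)), whereas you recover it from the edge data by induction on path length; both are fine.
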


\begin{proof}
By Proposition \ref{linboundpropn} (i), 
$B_G(n,r) \subseteq [n-C_1 r , n+C_1 r] \cap \mathbb{Z}$. 
For $\lvert i\rvert \leq C_1 r $, $n+i \in B_G(n,r)$ iff, 
for some $g \in B_{\mathbf{S}} (r)$, 
$f_g(\sigma^n \mathbf{x}) = i$, 
and by Proposition \ref{linboundpropn} (ii), 
this condition depends only on the $C_1 r$-cylinder of $X$ 
containing $\sigma^n \mathbf{x}$. 

For $n+i,n+j \in B_G(n,r)$, and $1 \leq c \leq d$, 
$(n+i,n+j)$ is a $c$-coloured edge in $G$ iff 
$f_{s_c} (\sigma^{n+i} \mathbf{x}) = j-i$. 
This depends only on the $C_1$-cylinder of $X$ 
containing $\sigma^{n+i} \mathbf{x}$, 
which in turn depends 
only on the $C_1 (r+1)$-cylinder of $X$ containing $\sigma^n \mathbf{x}$. 
\end{proof}

Let $r \geq 1$ and let $M = M(r)$ be as in Lemma \ref{intervalconstrlem}. 
For $1\leq c \leq d$ define 
$\overline{s}_c :\mathbb{Z}/M\mathbb{Z} \rightarrow \mathbb{Z}/M\mathbb{Z}$ by: 
\begin{center}
$\overline{s}_c (n + M \mathbb{Z}) = \phi(s_c)[n] + M \mathbb{Z}$ 
for $1 \leq n \leq M$. 
\end{center}

\begin{propn} \label{SymPropn}
The function $\overline{s}_c$ lies in $\Sym(\mathbb{Z}/M\mathbb{Z})$. 
\end{propn}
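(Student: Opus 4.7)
The plan is to reduce the claim to a pigeonhole-style injectivity argument. Since $\{1,\dots,M\}$ is a complete set of coset representatives for $M\mathbb{Z}$ in $\mathbb{Z}$, the function $\overline{s}_c$ is well-defined; since its domain and codomain are finite of equal size, it suffices to prove injectivity. So I would assume $\overline{s}_c(n + M\mathbb{Z}) = \overline{s}_c(n' + M\mathbb{Z})$ for some $1 \leq n \leq n' \leq M$ and aim for $n = n'$. This translates to
\begin{equation*}
\phi(s_c)[n'] - \phi(s_c)[n] = (n' - n) + (f_{s_c}(\sigma^{n'}\mathbf{x}) - f_{s_c}(\sigma^n \mathbf{x})) \in M\mathbb{Z}.
\end{equation*}
By Proposition \ref{linboundpropn}(i), $|f_{s_c}| \leq C_1$, so this expression lies in $[-2C_1, M-1+2C_1]$. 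Combined with $M \geq 10 C_1 r \geq 10 C_1$ (Lemma \ref{intervalconstrlem}(iii)), the only multiples of $M$ in that range are $0$ and $M$.

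The case $0$ is immediate: $\phi(s_c) \in \Sym(\mathbb{Z})$ (because $\phi$ is a homomorphism into $\Sym(\mathbb{Z})$), so it is injective, forcing $n = n'$. The case $M$ is the substantive one. Rearranging gives $n' - n \geq M - 2C_1$, which together with $n, n' \in [1, M]$ confines $n$ to $[1, C_1]$ and $n'$ to $[M-C_1+1, M]$. Writing $n' = M + 1 - a$ with $a \in [1, C_1]$, I would invoke Corollary \ref{intervalendptscor} with $i = 1 - a$ (so $|i| \leq C_1 - 1 \leq C_1 r$) to place $\sigma^{1-a}\mathbf{x}$ and $\sigma^{n'}\mathbf{x} = \sigma^{M+1-a}\mathbf{x}$ in a common $C_1(r+1)$-cylinder, hence in a common $C_1$-cylinder. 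Proposition \ref{linboundpropn}(ii), applied to $s_c \in B_S(1)$ with $m = C_1$, then yields $f_{s_c}(\sigma^{n'}\mathbf{x}) = f_{s_c}(\sigma^{1-a}\mathbf{x})$. Substituting into the equation $\phi(s_c)[n'] = \phi(s_c)[n] + M$ converts it into $\phi(s_c)[1-a] = \phi(s_c)[n]$, whence $n = 1 - a \leq 0$ by injectivity of $\phi(s_c)$ on $\mathbb{Z}$, contradicting $n \geq 1$.

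The main obstacle is precisely this wrap-around case: since $\phi(s_c)$ is a permutation of all of $\mathbb{Z}$ rather than of $\{1,\dots,M\}$, reducing modulo $M$ could in principle glue together two distinct values of $n$ at the boundary. Overcoming this is exactly what Lemma \ref{intervalconstrlem}(ii) was engineered for: it ensures that $\mathbf{x}$ and $\sigma^M \mathbf{x}$ agree on a long window, so $f_{s_c}$ behaves $M$-periodically near the endpoints of $\{1,\dots,M\}$, and any potential modular collision can be pulled back to a genuine collision for $\phi(s_c)$ on $\mathbb{Z}$, which is forbidden.
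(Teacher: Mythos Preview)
Your proposal is correct and follows essentially the same strategy as the paper's proof: reduce to injectivity, use the cocycle bound from Proposition~\ref{linboundpropn}(i) to show that $\phi(s_c)[n']-\phi(s_c)[n]$ can only equal $0$ or $M$, dispose of the first case by injectivity of $\phi(s_c)$ on $\mathbb{Z}$, and in the wrap-around case use Corollary~\ref{intervalendptscor} to transfer the value of $f_{s_c}$ across the ``seam'' and reduce again to injectivity of $\phi(s_c)$.

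One arithmetic slip: from $n'-n \geq M-2C_1$ together with $n,n'\in[1,M]$ you only get $n\in[1,2C_1]$ and $n'\in[M-2C_1+1,M]$, not the intervals of half that width that you wrote. With the corrected bound, $a=M+1-n'\in[1,2C_1]$, so $|1-a|\leq 2C_1-1$, and the hypothesis $|i|\leq C_1 r$ of Corollary~\ref{intervalendptscor} still holds once $r\geq 2$ (the paper's own proof uses the Corollary with $|i|\leq 2C_1$ and so implicitly needs the same restriction). After this fix, the rest of your argument goes through verbatim.
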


\begin{proof}
We check that $\overline{s}_c$ is injective, 
and therefore is indeed a well-defined permutation. 
If $1 \leq m<n \leq M$ with $\phi(s_c)[n] \equiv \phi(s_c)[m] \mod M$, 
then: 
\begin{center}
$n-m\equiv f_{s_c}(\sigma^m \mathbf{x})-f_{s_c}(\sigma^n\mathbf{x})\mod M$
\end{center}
but by Proposition \ref{linboundpropn} (i), 
\begin{center}
$\lvert f_{s_c}(\sigma^m \mathbf{x})-f_{s_c}(\sigma^n\mathbf{x}) \rvert \leq 2 C_1$
\end{center}
while $1 \leq n-m \leq M-1$, so either $1 \leq n-m \leq 2C_1$ 
or $M-2C_1 \leq n-m \leq M-1$. 
In the former case, 
\begin{center}
$\phi(s_c)[n] \equiv \phi(s_c)[m] \mod M$ 
but $\lvert \phi(s_c)[n] - \phi(s_c)[m] \rvert \leq 4 C_1$,
\end{center}
and since $M \geq 10 C_1$ by Lemma \ref{intervalconstrlem} (iii), 
$\phi(s_c)[n]=\phi(s_c)[m]$, contradicting the
injectivity of $\phi(s_c)$. 
The latter case is similar; we have: 
\begin{center}
$1 \leq m \leq 2C_1$ and $1 \leq m+M - n \leq 2C_1$. 
\end{center}
By Corollary \ref{intervalendptscor}, 
$\sigma^{m+M} \mathbf{x}$ and $\sigma^m \mathbf{x}$ 
lie in the same $C_1 (r+1)$-cylinder of $X$, 
so by Proposition \ref{linboundpropn}, 
$f_{s_c} (\sigma^{m+M} \mathbf{x}) = f_{s_c} (\sigma^m \mathbf{x})$. 
We may therefore argue as in the former case, 
with $m+M$ replacing $m$. 
\end{proof}

Theorem \ref{UBmainthm} is now immediate, by Remark \ref{LEFLAGrthIneqRmrk}, 
from the next result. 

\begin{thm}
We have 
$\mathcal{LA}_{\Gamma} ^S (\lfloor 2r/3 \rfloor) \leq 2 R_X (10 C_1 r)$. 
\end{thm}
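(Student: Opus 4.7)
The plan is to apply Lemma \ref{permisolem} to the actions of $\Gamma$ on $\mathbb{Z}$ (via $\phi$, with generating tuple $\mathbf{S}$) and of $\overline{\Gamma} := \langle \overline{s}_1 , \ldots , \overline{s}_d \rangle \leq \Sym(\mathbb{Z}/M\mathbb{Z})$ on $\mathbb{Z}/M\mathbb{Z}$ (with generating tuple $\overline{\mathbf{S}} = (\overline{s}_1,\ldots,\overline{s}_d)$). Both actions are faithful: automatically so for $\overline{\Gamma}$, and by density of $\mathcal{O}$ for $\Gamma$. Since $\lceil 3\lfloor 2r/3\rfloor/2\rceil \leq r$, it suffices to show that the Schreier graphs $G$ and $\overline{G}$ are locally colour-isomorphic at radius $r$: Lemma \ref{permisolem} then produces a local embedding $B_{\mathbf{S}}(\lfloor 2r/3 \rfloor) \rightarrow \overline{\Gamma} \leq \Sym(M)$, whence $\mathcal{LA}_{\Gamma}^S(\lfloor 2r/3 \rfloor) \leq M \leq 2 R_X(10 C_1 r)$ by Lemma \ref{intervalconstrlem}(iii).

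The easy direction of the local colour-isomorphism follows from Lemma \ref{intervalconstrlem}(i): $\{\sigma^i \mathbf{x} : 1 \leq i \leq M\}$ meets every $C_1(r+1)$-cylinder of $X$, so by Lemma \ref{BallSchrCylLem1}, every ball $B_G(n,r)$ in $G$ is isomorphic to some $B_G(i,r)$ with $i \in [1,M]$. The task therefore reduces to constructing, for each $\overline{n} \in \mathbb{Z}/M\mathbb{Z}$ with canonical representative $n \in \{1,\ldots,M\}$, a graph isomorphism from $B_{\overline{G}}(\overline{n},r)$ onto a ball in $G$. The natural candidate is the ``straight lift'' $\overline{n+j} \mapsto n+j$ for $|j| \leq C_1 r$, which by Proposition \ref{linboundpropn}(i) is defined on all of $B_{\overline{G}}(\overline{n},r)$, and is injective since $M > 2 C_1 r$ by Lemma \ref{intervalconstrlem}(iii). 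When $n \in [C_1 r + 1, M - C_1 r]$ the lifted vertices all lie in $[1,M]$, and the edges of both graphs from $\overline{n+j}$ (resp. $n+j$) are defined via the common cocycle value $f_{s_c}(\sigma^{n+j} \mathbf{x})$, so the straight lift is immediately an isomorphism onto $B_G(n,r)$.

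The main obstacle, and the heart of the argument, is the ``seam'' case $n \in [1, C_1 r] \cup [M - C_1 r + 1, M]$: the lifted ball spills outside $[1,M]$, and the edges of $\overline{G}$ at wrap-around vertices are defined using a canonical representative that differs from the straight lift by $\pm M$. To verify that the edges nonetheless match, I will invoke Corollary \ref{intervalendptscor}: for $|i| \leq C_1 r$, $\sigma^i \mathbf{x}$ and $\sigma^{M+i} \mathbf{x}$ lie in a common $C_1(r+1)$-cylinder, so Proposition \ref{linboundpropn}(ii) (applied to each $s_c \in B_S(1)$ with $m = C_1(r+1)$) forces $f_{s_c}(\sigma^i \mathbf{x}) = f_{s_c}(\sigma^{M+i} \mathbf{x})$. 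Consequently the two choices of representative yield the same outgoing edge, so the straight lift remains a graph isomorphism onto $B_G(n,r)$ or $B_G(n+M,r)$ as appropriate, completing the local colour-isomorphism and thus the theorem.
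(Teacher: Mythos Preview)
Your proposal is correct and follows essentially the same approach as the paper: both apply Lemma~\ref{permisolem} by showing the Schreier graphs $G$ and $\overline{G}$ are locally colour-isomorphic at radius $r$, using Lemma~\ref{intervalconstrlem}(i) with Lemma~\ref{BallSchrCylLem1} to reduce to balls centred in $[1,M]$, and then matching edges across the seam via Corollary~\ref{intervalendptscor} and Proposition~\ref{linboundpropn}(ii). The only cosmetic difference is that the paper works with the projection $\pi_M : G \to \overline{G}$ while you phrase it via the inverse ``straight lift'' $\overline{G} \to G$; in either direction the key edge-compatibility identity is $f_{s_c}(\sigma^{k}\mathbf{x}) = f_{s_c}(\sigma^{k\pm M}\mathbf{x})$ for the relevant range of $k$, which you correctly isolate.
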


\begin{proof}
Let $M$ be as in Lemma \ref{intervalconstrlem}. 
We claim that there is a local embedding $B_{\mathbf{S}}(\lfloor 2r/3 \rfloor) 
\rightarrow \Sym (\mathbb{Z}/M\mathbb{Z})$ 
sending $s_c$ to $\overline{s}_c$ for $1 \leq c \leq d$. 
By the upper bound on $M$ from Lemma \ref{intervalconstrlem} (iii), 
the result follows. 
Let $\overline{\mathbf{S}}=\lbrace\overline{s}_1 ,\ldots ,\overline{s}_d\rbrace$; 
let $\overline{\Gamma} = \langle \overline{\mathbf{S}} \rangle 
\leq \Sym (\mathbb{Z}/M\mathbb{Z})$, 
and let $\overline{G} = \Schr (\overline{\Gamma},\mathbb{Z}/M\mathbb{Z},\overline{\mathbf{S}})$. 
By Lemma \ref{permisolem}, it suffices to show that 
$G$ and $\overline{G}$ are locally colour-isomorphic at radius $r$. 

By Lemma \ref{BallSchrCylLem1} and Lemma \ref{intervalconstrlem} (i), 
for every $m \in \mathbb{Z}$, 
$B_G(m , r) \subseteq G$ is 
isomorphic (as a based, edge-coloured graph) to $B_G(n , r)$, 
for some $1 \leq n \leq M$. 
Let $\pi_M:\mathbb{Z}\rightarrow\mathbb{Z}/M\mathbb{Z}$ be reduction modulo $M$. 
It suffices to check that for every $1 \leq n \leq M$ 
the restriction of $\pi_M$ to $V\big( B_G(n , r) \big) \subseteq \mathbb{Z}$ 
induces an isomorphism of based edge-coloured graphs 
$B_G(n,r) \rightarrow B_{\overline{G}}(\pi_M (n),r)$.  

Consider $I_n = [n-C_1 r,n+C_1 r] \cap \mathbb{Z}$.
Since $2 C_1 r < M$ (by Lemma \ref{intervalconstrlem}), 
the restriction of $\pi_M$ to $I_n$ is injective. 
By Proposition \ref{linboundpropn} (i), 
$V\big( B_G(n , r) \big) \subseteq I_{n}$, 
so the restriction of $\pi_M$ to $V\big( B_G(n , r) \big)$ 
is a bijection onto its image. Then it suffices to show that
$\pi_{M}(\phi(s_{c})[k])=\overline{s}_{c}(\pi_{M}(k))$ for all $k\in I_{n}$ and all $c$.

Since $-C_1 r \leq k \leq M + C_1 r$, there exists $\epsilon\in\{0,\pm 1\}$
such that $k'=k+\epsilon M$ is the representative of $k+M\mathbb{Z}$ in $[1,M]$.
By Corollary~\ref{intervalendptscor} and Proposition~\ref{linboundpropn} (ii)
(or by $k=k'$) we have $f_{s_{c}}(\sigma^{k}\mathbf{x})=f_{s_{c}}(\sigma^{k'}\mathbf{x})$, which means $\phi(s_{c})[k]=\phi(s_{c})[k']+\epsilon M$. Then,
\begin{equation*}
\pi_{M}(\phi(s_{c})[k])=\pi_{M}(\phi(s_{c})[k']+\epsilon M)=\pi_{M}(\phi(s_{c})[k'])=\overline{s}_{c}(k'+M\mathbb{Z})=\overline{s}_{c}(\pi_{M}(k))
\end{equation*}
as claimed. 

By the preceding paragraph, the restriction of $\pi_M$ to $V\big( B_G(n , r) \big)$ 
preserves edges and colours. 
In particular, the vertices of $B_{\overline{G}}(\pi_M (n),r)$, 
which are precisely the endpoints of (undirected) edge-paths of length $\leq r$ 
in $\overline{G}$ starting at $\pi_M (n)$, 
are exactly the image under $\pi_M$ of the endpoints 
of edge-paths of length $\leq r$ in $G$ starting at $n$, 
namely the vertices of $B_G(n , r)$. 
Hence $\pi_M$ does indeed induce the desired isomorphism of based 
edge-coloured graphs. 
\end{proof}

\section{Obstructions to small local embeddings} \label{LBSection}

Let $(X,\sigma)$ be a minimal subshift over $A$, 
and fix $\mathbf{x} \in A^{\mathbb{Z}}$ 
with $X = \overline{\mathcal{O}(\mathbf{x})}$. 
Let $S \subseteq \lBrack\sigma\rBrack^{\prime}$ be a finite generating set 
(such exists by Theorem \ref{Matuithm}), 
and let $\lvert \cdot \rvert_S$ be the word length function induced 
on $\lBrack\sigma\rBrack^{\prime}$ by $S$. 
Recall that 
$p_X = p_{\mathbf{x}} : \mathbb{N} \rightarrow \mathbb{N}$ 
is the \emph{complexity function} of $\mathbf{x}$ 
and $R_X = R_{\mathbf{x}} : \mathbb{N} \rightarrow \mathbb{N}$ 
is the \emph{recurrence function} of $\mathbf{x}$. 

\begin{lem} \label{numcyllem}
Let $\Cyl_X (m)$ be the set of $m$-cylinders of $X$. 
Then $\lvert \Cyl_X (m) \rvert = p_X (2m+1)$. 
\end{lem}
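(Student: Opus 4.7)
The plan is to exhibit an explicit bijection between $\Cyl_X(m)$ and $F_{2m+1}(\mathbf{x})$, from which the equality of cardinalities is immediate.

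First I would define the map $\Phi$ sending an $m$-cylinder $C$ to the word $u_{-m} u_{-m+1} \cdots u_m \in A^{2m+1}$ that defines it, i.e.\ $C = X \cap \lAngle u_{-m}, \ldots, u_{-1}, \underline{u}_0, u_1, \ldots, u_m \rAngle$. This map is well-defined because two distinct words of length $2m+1$ give disjoint cylinder sets in $A^{\mathbb{Z}}$, so the defining word of $C$ is unique; the same disjointness shows $\Phi$ is injective.

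Next I would identify the image of $\Phi$. By definition, the cylinder determined by $u \in A^{2m+1}$ meets $X$ if and only if there exists $\mathbf{y} \in X$ with $y_i = u_{i+m+1}$ for $-m \leq i \leq m$ (under the indexing $u = u_1 \cdots u_{2m+1}$), i.e.\ iff $u$ occurs as a $(2m+1)$-factor of some element of $X$. By Remark \ref{minseqindeprmrk}, $F_{2m+1}(\mathbf{y}) = F_{2m+1}(\mathbf{x})$ for every $\mathbf{y} \in X$, so this condition is equivalent to $u \in F_{2m+1}(\mathbf{x})$. Hence the image of $\Phi$ is exactly $F_{2m+1}(\mathbf{x})$, and $\Phi$ is a bijection. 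Therefore $|\Cyl_X(m)| = |F_{2m+1}(\mathbf{x})| = p_X(2m+1)$.

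There is no real obstacle here; the only subtlety is ensuring that the set of factors used to index nonempty cylinders does not depend on the choice of $\mathbf{x}$, which is precisely the content of Remark \ref{minseqindeprmrk} and flows from minimality of $(X,\sigma)$.
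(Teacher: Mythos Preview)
Your proposal is correct and follows essentially the same route as the paper: both set up the natural bijection between $m$-cylinders of $X$ and $F_{2m+1}(\mathbf{x})$, the only cosmetic difference being that the paper argues directly via density of $\mathcal{O}(\mathbf{x})$ in $X$ (a cylinder meets $X$ iff it meets the orbit, iff the word is a factor of $\mathbf{x}$), whereas you pass through Remark~\ref{minseqindeprmrk} to reduce ``factor of some $\mathbf{y}\in X$'' to ``factor of $\mathbf{x}$''.
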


\begin{proof}
$\mathcal{O}(\mathbf{x})=\lbrace\sigma^i(\mathbf{x}):i\in\mathbb{Z}\rbrace$ 
is dense in $X$, so for $u_i \in A$, 
$U = \lAngle u_{-m},\ldots,u_{-1},\underline{u}_0,u_1,\ldots,u_m\rAngle$ 
intersects $X$ iff for some $i \in \mathbb{Z}$, 
$\sigma^i (\mathbf{x}) \in U$, which occurs iff 
$u_j = x_{i+j}$ for $-m \leq j \leq m$. 
That is, $U$ intersects $X$ iff $u_{-m} \cdots u_m$ 
is a factor of $\mathbf{x}$. 
\end{proof}

\begin{lem} \label{disjointcyllem}
Suppose that $m \geq (R_{\mathbf{x}}(4)-1)/2$, and let $U \in \Cyl_X(m)$. 
Then the sets $\sigma^{i}(U)$, for $-2\leq i\leq 2$, are pairwise disjoint. 
\end{lem}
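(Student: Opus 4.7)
The plan is to argue by contradiction. Suppose $\sigma^i(U) \cap \sigma^j(U) \neq \emptyset$ for some $-2 \leq i < j \leq 2$, and set $k = j - i \in \{1,2,3,4\}$. Applying $\sigma^{-i}$ to any common point produces some $\mathbf{y} \in U$ with $\sigma^k \mathbf{y} \in U$; the task is then to show that this is incompatible with the recurrence bound on $m$.

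First I would translate the two membership conditions on $\mathbf{y}$ into a periodicity statement for the defining word of $U$. Write $U = X \cap \lAngle u_{-m}, \ldots, \underline{u}_0, \ldots, u_m \rAngle$. From $\mathbf{y} \in U$ we have $y_j = u_j$ for $-m \leq j \leq m$, and from $\sigma^k \mathbf{y} \in U$ we have $y_{j+k} = u_j$ for the same range of $j$. Matching these in the overlap gives $u_{j+k} = u_j$ for $-m \leq j \leq m - k$, so the word $u = u_{-m}\cdots u_m$ has period $k$.

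Next I would invoke the hypothesis $m \geq (R_{\mathbf{x}}(4)-1)/2$, which rearranges to $|u| = 2m+1 \geq R_{\mathbf{x}}(4)$. Directly from the definition of the recurrence function, every 4-factor of $\mathbf{x}$ must then occur as a 4-factor of $u$, so $F_4(\mathbf{x}) \subseteq F_4(u)$.

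To close, I would count 4-factors on both sides. Because $u$ has period $k \leq 4$ and is long enough ($2m+1 \geq R_{\mathbf{x}}(4) \geq 9$ by Theorem \ref{MHThm}, hence certainly $|u| \geq k+4$), the 4-factor $(u_j,u_{j+1},u_{j+2},u_{j+3})$ of $u$ depends only on $j \bmod k$, whence $|F_4(u)| \leq k \leq 4$. On the other hand, Theorem \ref{MHThm} also yields $p_{\mathbf{x}}(4) \geq 4+1 = 5$. Combining these forces $5 \leq p_{\mathbf{x}}(4) = |F_4(\mathbf{x})| \leq |F_4(u)| \leq 4$, a contradiction. No step presents a real obstacle; the only point requiring care is the index bookkeeping that extracts the periodicity of $u$ from the two cylinder-membership conditions.
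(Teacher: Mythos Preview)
Your argument is correct and follows essentially the same route as the paper's proof: a contradiction via periodicity of the defining word combined with the Morse--Hedlund lower bound $p_{\mathbf{x}}(n)\geq n+1$. The only difference is cosmetic: the paper counts $k$-factors of the word (obtaining $|F_k(w)|\leq k$ versus $p_{\mathbf{x}}(k)\geq k+1$), whereas you fix the factor length at $4$ and use $|F_4(u)|\leq k\leq 4$ versus $p_{\mathbf{x}}(4)\geq 5$. One small slip: applying $\sigma^{-i}$ to a common point $\mathbf{z}\in\sigma^i(U)\cap\sigma^j(U)$ gives $\mathbf{y}\in U$ with $\sigma^{-k}\mathbf{y}\in U$, not $\sigma^{k}\mathbf{y}\in U$; you want $\sigma^{-j}$ instead (or simply note the symmetry), but this does not affect the subsequent periodicity computation.
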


\begin{proof}
Suppose to the contrary that $\mathbf{y} \in \sigma^{i}(U) \cap \sigma^{j}(U)$, 
for some $-2 \leq i < j \leq 2$. Writing $k =j-i$, 
we have $y_l = y_{l+k}$ for $-m \leq l \leq m-k$. 
Letting $w = y_{-m} y_{-m+1}\cdots y_m \in F_{2m+1} (\mathbf{x})$, 
we have $\lvert F_k(w)\rvert \leq k$. 
On the other hand, $2m+1 \geq R_{\mathbf{x}}(k)$, 
so by the definition of $R_{\mathbf{x}}$, 
$\lvert F_k(w) \rvert = \lvert F_k(\mathbf{x}) \rvert =p_{\mathbf{x}}(k)\geq k+1$ 
(the last inequality holding by Theorem \ref{MHThm}), a contradiction. 
\end{proof}

\begin{lem} \label{disjoint5seqlem}
Let $m \geq \max \lbrace (R_{\mathbf{x}}(4)-1)/2 , 5 \rbrace$. 
There exists a set $\DCyl_X(m) \subseteq \Cyl_X(m)$ such that
the sets $\sigma^i (U)$, for $ U \in \DCyl_X(m)$ and $-2 \leq i \leq 2$, 
are pairwise-disjoint, and: 
\begin{equation} \label{DCylbig}
\lvert \DCyl_X(m) \rvert \geq p_{\mathbf{x}} (2m-7)/9 \text{.}
\end{equation}
\end{lem}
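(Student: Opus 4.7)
The plan is to pick one $m$-cylinder per ``central'' length-$(2m-7)$ word, and then prune to avoid shift conflicts between distinct representatives; Lemma~\ref{disjointcyllem} already handles disjointness among the five shifts of a single $m$-cylinder. First, for each $w \in F_{2m-7}(\mathbf{x})$, I would extend $w$ to some $u(w) \in F_{2m+1}(\mathbf{x})$ by prepending and appending four symbols so the result remains a factor of $\mathbf{x}$ (possible since every factor of an infinite word extends in both directions). Let $U(w) \in \Cyl_X(m)$ be the $m$-cylinder determined by $u(w)$, so that the central $2m-7$ letters of $u(w)$ are exactly $w$, and the map $w \mapsto U(w)$ is injective.

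Next, I would analyze when two representatives conflict: $U(w_1)$ and $U(w_2)$ conflict if $\sigma^i(U(w_1)) \cap \sigma^j(U(w_2)) \neq \emptyset$ for some $-2 \leq i, j \leq 2$. Writing $d = i - j \in \{-4, \ldots, 4\}$ and unpacking the cylinder definitions, such an intersection forces $u(w_2)_{k} = u(w_1)_{k + d}$ for all $k$ in the overlap of the intervals $[-m, m]$ and $[-m - d, m - d]$. Since $|d| \leq 4$, this overlap contains the central range $[-m + 4, m - 4]$, so the central word $w_2$ is \emph{uniquely determined} by $u(w_1)$ and $d$: it is the length-$(2m - 7)$ subword of $u(w_1)$ starting at position $-m+4+d$. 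Since $d = 0$ forces $w_2 = w_1$, the number of distinct $w_2 \neq w_1$ that can conflict with $w_1$ is at most $8$.

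Finally, I would form the graph $H$ on vertex set $F_{2m-7}(\mathbf{x})$ with an edge $w_1 \sim w_2$ iff $U(w_1)$ and $U(w_2)$ conflict. By the previous step, $H$ has maximum degree at most $8$, so a greedy colouring yields a proper $9$-colouring, and the largest colour class $W$ satisfies $|W| \geq p_{\mathbf{x}}(2m - 7)/9$. Setting $\DCyl_X(m) = \{U(w) : w \in W\}$ then delivers the claim: for $U(w), U(w') \in \DCyl_X(m)$ with $w \neq w'$, independence of $W$ in $H$ yields disjointness of all shifts $\sigma^i(U(w))$ and $\sigma^j(U(w'))$ with $|i|, |j| \leq 2$; and for a single $U(w)$, disjointness of its five shifts $\sigma^i(U(w))$ for $-2 \leq i \leq 2$ is Lemma~\ref{disjointcyllem}, which applies since $m \geq (R_{\mathbf{x}}(4) - 1)/2$.

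I expect the main technical step to be the conflict analysis, specifically the careful bookkeeping that shows the overlap of the constraint intervals arising from $\sigma^i$ and $\sigma^j$ always contains $[-m + 4, m - 4]$ when $|i - j| \leq 4$, thereby pinning down $w_2$ from $u(w_1)$. The constant $9$ enters naturally here as the number of length-$(2m-7)$ subwords of a length-$(2m+1)$ word, matching the $9$ possible values of $d$, and this is also what forces the choice of central length $2m-7$ rather than something larger.
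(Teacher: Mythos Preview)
Your proposal is correct. The conflict analysis is sound: if $\sigma^i(U(w_1))\cap\sigma^j(U(w_2))\neq\emptyset$ then, with $d=i-j$, one indeed gets $u(w_2)_k=u(w_1)_{k+d}$ for every $k$ in $[-m,m]\cap[-m-d,m-d]$, and since $|d|\le4$ this interval contains $[-m+4,m-4]$, pinning down $w_2$. The degree bound of $8$ and the greedy $9$-colouring then give the desired count, while Lemma~\ref{disjointcyllem} handles the five shifts of a single cylinder.

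The paper's proof reaches the same bound by a different packaging of the same combinatorics. Rather than fixing a representative $U(w)$ for each central word and building a conflict graph, it runs a greedy \emph{iteration}: at each step it picks an $m$-cylinder $U_{k+1}$ whose containing $(m-4)$-cylinder has not yet been ``used'', and then marks as used the at most nine $(m-4)$-cylinders containing $\sigma^j(U_{k+1})$ for $|j|\le4$. The process stops only when all $p_{\mathbf{x}}(2m-7)$ such cylinders are used, so it lasts at least $p_{\mathbf{x}}(2m-7)/9$ steps. Your version separates the extension step (choosing $u(w)$) from the selection step (colouring), which makes the source of the constant $9$ more transparent; the paper's version avoids building the graph explicitly and directly produces the set. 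Both rest on the identical observation that the shift-difference $d\in\{-4,\dots,4\}$ determines the central $(2m-7)$-word of a conflicting cylinder.
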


\begin{proof}
We construct $\DCyl_X(m)$ via an iterative process, as follows. 
At step $0$, let $U_0$ be any $m$-cylinder. 
Then the $\sigma^i (U_0)$, for $-2 \leq i \leq 2$, 
are pairwise disjoint, by Lemma \ref{disjointcyllem}. 
Let $A_0$ be the set of $(m-4)$-cylinders containing one of the $\sigma^j (U_0)$, 
for $-4 \leq j \leq 4$, 
so that $\lvert A_0 \rvert \leq 9$, 
and let $B_0 = \lbrace U_0 \rbrace$. 

At each subsequent step $k+1$, 
we start with $B_k$ a family of $m$-cylinders 
and $A_k$ a family of $(m-4)$-cylinders such that, 
for any $U \in B_k$ and $-4 \leq j \leq 4$, 
there exists $V \in A_k$ such that $\sigma^j (U) \subseteq V$. 
If $A_k$ is a cover of $X$, 
then set $\DCyl_X(m) = B_k$ and stop. 
Otherwise, choose $\mathbf{y}_k \in X \setminus (\bigcup_{V \in A_k} V)$ and 
let $U_{k+1}$ be the $m$-cylinder of $X$ containing $\mathbf{y}_k$. 
Then $\sigma^i (U_{k+1})$ ($-2 \leq i \leq 2$) are pairwise disjoint 
(by Lemma \ref{disjointcyllem}). 
If there exists $0 \leq l \leq k$ and $-2 \leq i,j \leq 2$ 
such that $\sigma^i (U_{k+1}) \cap \sigma^j (U_l) \neq \emptyset$, 
then, letting $V \in A_k$ be such that $\sigma^{j-i} (U_l) \subseteq V$, 
$U_{k+1} \subseteq V$ (by Remark \ref{cylpartrmrk}), 
contradicting the choice of $\mathbf{y}_k$. 

We may therefore let $B_{k+1} = B_k \cup \lbrace U_{k+1} \rbrace$ 
and produce $A_{k+1}$ by adding to $A_k$ all $(m-4)$-cylinders 
containing one of the $\sigma^j (U_{k+1})$ ($-4 \leq j \leq 4$). 
At every stage $\lvert A_{k+1} \rvert \leq \lvert A_k \rvert +9$, 
$\lvert B_{k+1} \rvert \geq \lvert B_k \rvert + 1$, 
and by Lemma \ref{numcyllem}, 
the process terminates only when $\lvert A_k \rvert = p_{\mathbf{x}} (2m-7)$. 
\end{proof}

\begin{notn}
Let $U \subseteq X$ be a nonempty clopen set, 
such that $\sigma^{-1}(U)$, $U$ and $\sigma(U)$ are pairwise disjoint. 
We denote by $f_U$ the element of $\lBrack \sigma \rBrack$ given by: 
\begin{equation*}
f_U (\mathbf{y}) =\begin{cases}
\sigma(\mathbf{y}) & \mathbf{y} \in U \cup \sigma^{-1} (U), \\
\sigma^{-2}(\mathbf{y}) & \mathbf{y} \in \sigma (U), \\
\mathbf{y} & \text{otherwise.}
\end{cases}
\end{equation*}
\end{notn}

\begin{lem} \label{fUinCommSubgrpLem}
All $f_U$ lie in $\lBrack \sigma \rBrack^{\prime}$. 
\end{lem}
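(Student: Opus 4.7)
The plan is to exhibit $f_U$ as a single commutator of two involutions in $\lBrack \sigma \rBrack$, modelled on the identity $[(1\,2),(1\,3)] = (1\,2\,3)$ in $\Sym(3)$. Set $V_1 = \sigma^{-1}(U)$, $V_2 = U$, $V_3 = \sigma(U)$; these are pairwise disjoint clopen subsets of $X$ by hypothesis, and $f_U$ acts on $V_1 \cup V_2 \cup V_3$ as a cyclic permutation $V_1 \to V_2 \to V_3 \to V_1$ implemented by $\sigma, \sigma, \sigma^{-2}$ respectively, while fixing the complement.

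First I would introduce two elements $t_{12}, t_{13} \in \Homeo(X)$ acting as ``partial shifts'' swapping pairs among the $V_i$: namely
\begin{equation*}
t_{12}(\mathbf{y}) = \begin{cases} \sigma(\mathbf{y}) & \mathbf{y} \in V_1, \\ \sigma^{-1}(\mathbf{y}) & \mathbf{y} \in V_2, \\ \mathbf{y} & \text{otherwise,}\end{cases}
\qquad
t_{13}(\mathbf{y}) = \begin{cases} \sigma^2(\mathbf{y}) & \mathbf{y} \in V_1, \\ \sigma^{-2}(\mathbf{y}) & \mathbf{y} \in V_3, \\ \mathbf{y} & \text{otherwise.}\end{cases}
\end{equation*}
Pairwise disjointness of $V_1,V_2,V_3$ ensures these are well-defined bijections; each is in fact an involution, and each is piecewise a power of $\sigma$ on a finite clopen partition of $X$, so $t_{12}, t_{13} \in \lBrack \sigma \rBrack$.

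The main step is then to verify $[t_{12}, t_{13}] = t_{12} t_{13} t_{12}^{-1} t_{13}^{-1} = f_U$ by a direct case-check on the four clopen regions $V_1, V_2, V_3, X \setminus (V_1 \cup V_2 \cup V_3)$, exploiting $t_{12}^{-1}=t_{12}$ and $t_{13}^{-1}=t_{13}$. At each stage one tracks which of the three distinguished sets contains the intermediate image, and applies the relevant power of $\sigma$ (or the identity) accordingly; comparing with the piecewise definition of $f_U$ yields equality. Once $f_U$ is exhibited as a commutator of elements of $\lBrack \sigma \rBrack$, it lies in $\lBrack \sigma \rBrack^{\prime}$ by definition.

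I expect the only obstacle to be the bookkeeping in the four-case verification, but since the $\sigma^i(U)$ for $-2 \le i \le 2$ are automatically pairwise disjoint (as $V_1, V_2, V_3$ are) and $t_{13}$ is supported where $t_{12}$ is a shift by a different amount, each trajectory through the four maps is unambiguous, so the computation is routine.
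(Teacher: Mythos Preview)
Your approach is correct and essentially the same as the paper's: both exhibit $f_U$ as a single commutator in $\lBrack \sigma \rBrack$, modelled on a commutator identity for a $3$-cycle in $\Sym(3)$. Your $t_{12}$ is exactly the paper's auxiliary element $h_U$; the paper then takes $f_U$ itself as the second factor and records the one-line identity $f_U = f_U^{-1} h_U^{-1} f_U h_U$, which spares the introduction of $t_{13}$ and the four-case check. One small inaccuracy to fix: your closing remark that the $\sigma^i(U)$ for $-2 \le i \le 2$ are ``automatically'' pairwise disjoint does not follow from the hypothesis (only $i \in \{-1,0,1\}$ is assumed), but your verification never actually uses this---only the disjointness of $V_1, V_2, V_3$ is needed, so the argument stands.
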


\begin{proof}
Define $h_U \in \lBrack \sigma \rBrack$ by: 
\begin{equation*}
h_U (\mathbf{y}) =\begin{cases}
\sigma(\mathbf{y}) & \mathbf{y} \in \sigma^{-1} (U), \\
\sigma^{-1}(\mathbf{y}) & \mathbf{y} \in U, \\
\mathbf{y} & \text{otherwise.}
\end{cases}
\end{equation*}
Then $f_U = f_U ^{-1} h_U ^{-1} f_U h_U$. 
\end{proof}

The following identities appear as \cite[Lemma 5.3]{Mat}; 
they are proved by direct calculation, 
some of which is explained in \cite[Lemma 3.0.11]{Jus}. 

\begin{lem} \label{ConjCommIdLem}
Let $U , V \subseteq X$ be nonempty clopen subsets. 
\begin{itemize}
\item[(i)] If $\sigma^i (V)$ are pairwise disjoint for $-2 \leq i \leq 2$, 
and $U \subseteq V$, then: 
\begin{align*}
\tau_V f_U \tau_V ^{-1} & = f_{\sigma(U)} \\
\tau_V ^{-1} f_U \tau_V & = f_{\sigma^{-1}(U)}
\end{align*} 
where $\tau_V = f_{\sigma^{-1}(V)} f_{\sigma(V)}$; 
\item[(ii)] If $\sigma^{-1} (U)$, $U$, $\sigma (U) \cup \sigma^{-1} (V)$, 
$V$, $\sigma(V)$ are pairwise disjoint, then: 
\begin{equation*} 
f_{\sigma (U) \cap \sigma^{-1}(V)} = f_V f_U ^{-1} f_V ^{-1} f_U\text{.}
\end{equation*}
\end{itemize}
\end{lem}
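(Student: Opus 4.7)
The plan is to prove both identities by direct verification on a clopen partition of $X$, exploiting the fact that $f_U$ acts as the 3-cycle $\sigma^{-1}(U) \xrightarrow{\sigma} U \xrightarrow{\sigma} \sigma(U) \xrightarrow{\sigma^{-2}} \sigma^{-1}(U)$ on its support and as the identity elsewhere (so $f_U^{-1}$ runs the same cycle backwards).

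For part (i), I would first compute $\tau_V = f_{\sigma^{-1}(V)} f_{\sigma(V)}$ explicitly. The disjointness of the five sets $\sigma^i(V)$ for $-2 \leq i \leq 2$ makes both factors well-defined, and since their supports overlap only in $V$, a short calculation gives that $\tau_V$ is the 5-cycle
\begin{equation*}
\sigma^{-2}(V) \to \sigma^{-1}(V) \to V \to \sigma(V) \to \sigma^{2}(V) \to \sigma^{-2}(V),
\end{equation*}
acting as $\sigma$ on the first four pieces and as $\sigma^{-4}$ on $\sigma^{2}(V)$. Since $U \subseteq V$, the three pieces $\sigma^{-1}(U), U, \sigma(U)$ of the support of $f_U$ all lie in the ``$\sigma$-region'' of $\tau_V$, so $\tau_V$ sends them respectively to $U, \sigma(U), \sigma^{2}(U)$. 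It follows that the support of $\tau_V f_U \tau_V^{-1}$ is $U \cup \sigma(U) \cup \sigma^{2}(U)$, i.e. the support of $f_{\sigma(U)}$, and a per-atom check confirms that the conjugated action on these three atoms is $\sigma, \sigma, \sigma^{-2}$, matching $f_{\sigma(U)}$. The second identity follows by applying the same reasoning to $\tau_V^{-1}$.

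For part (ii), write $W = \sigma(U) \cap \sigma^{-1}(V)$, so that $\sigma^{-1}(W) = U \cap \sigma^{-2}(V) \subseteq U$ and $\sigma(W) = \sigma^{2}(U) \cap V \subseteq V$. I would partition $X$ into the pairwise disjoint clopen pieces $\sigma^{-1}(U)$, $U \setminus \sigma^{-1}(W)$, $\sigma^{-1}(W)$, $\sigma(U) \setminus W$, $W$, $\sigma^{-1}(V) \setminus W$, $V \setminus \sigma(W)$, $\sigma(W)$, $\sigma(V)$, together with the complement of the union, then trace the four-fold composition $f_V f_U^{-1} f_V^{-1} f_U$ through each piece, at each step reading off from the hypothesis whether the current point lies in the support of the next map. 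On every piece disjoint from $\sigma^{-1}(W) \cup W \cup \sigma(W)$, the outer and inner pairs of maps either both fix the piece or exactly cancel, yielding the identity; on the three remaining pieces $\sigma^{-1}(W), W, \sigma(W)$ the composition realises the shifts $\sigma, \sigma, \sigma^{-2}$ respectively, which is exactly $f_W$.

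The main obstacle is the bookkeeping in (ii): the hypothesis permits $W$ to be nonempty, so one must split each of $\sigma(U)$ and $\sigma^{-1}(V)$ into its $W$-part and its complement and verify that on the complement the image under $f_V^{-1}$ (respectively $f_U^{-1}$) lands outside the support of the subsequent map, thereby forcing cancellation with $f_V$ (respectively $f_U$). No single step is conceptually subtle, but missing a sub-case would break the identity.
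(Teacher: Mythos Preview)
Your proposal is correct and follows exactly the route the paper indicates: the paper does not give its own proof but cites Matui and Juschenko, stating that the identities ``are proved by direct calculation,'' and your per-piece verification on a clopen partition is precisely such a calculation. Your description of $\tau_V$ as the 5-cycle acting by $\sigma$ on $\sigma^{-2}(V),\ldots,\sigma(V)$ and by $\sigma^{-4}$ on $\sigma^2(V)$, and your case split in (ii) separating the $W$-parts of $\sigma(U)$ and $\sigma^{-1}(V)$ from their complements, are both accurate and complete.
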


\begin{propn} \label{gen3-cycprop}
There exists $C_2 = C_2 (X,S) > 0$ such that 
for all $m \geq (R_{\mathbf{x}}(4)-1)/2$, 
if $W \subseteq X$ is an $m$-cylinder of $X$, 
then $f_{\sigma^{-1}(W)},f_W,f_{\sigma(W)} \in B_S (C_2 m^2)$. 
\end{propn}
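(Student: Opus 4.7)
I would prove the proposition by induction on $m$. In the base case, take $m$ at most some threshold $m_1 \geq \lceil (R_{\mathbf{x}}(4)-1)/2 \rceil$ depending on $X$ and $S$: the collection $\{f_W, f_{\sigma(W)}, f_{\sigma^{-1}(W)} : W \in \Cyl_X(m_1)\}$ is a finite subset of the finitely generated group $\lBrack \sigma \rBrack^{\prime}$, so its elements have uniformly bounded word length in $S$. Choosing $C_2$ large enough absorbs this.

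For the inductive step, fix an $(m+1)$-cylinder $W$ with defining word $u = u_{-m-1} u_{-m} \cdots u_{m+1}$, and let $A, B$ be the $m$-cylinders with defining words $u_{-m-1} \cdots u_{m-1}$ and $u_{-m+1} \cdots u_{m+1}$ respectively. A direct comparison of positional constraints shows $\sigma(A) \cap \sigma^{-1}(B) = W$. The required pairwise-disjointness of $\sigma^{-1}(A), A, \sigma(A) \cup \sigma^{-1}(B), B, \sigma(B)$ follows from $m \geq m_0 = \lceil (R_{\mathbf{x}}(4)-1)/2 \rceil$ via Lemma \ref{disjointcyllem} for shifts of each cylinder alone, and from distinctness of the words $a, b$ for the cross-terms (any exceptional near-periodic patterns can be folded into the base case by enlarging $m_1$). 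Lemma \ref{ConjCommIdLem}(ii) then gives $f_W = f_B f_A^{-1} f_B^{-1} f_A$, so the naive bound is $|f_W|_S \leq 2|f_A|_S + 2|f_B|_S$.

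The main difficulty is that this naive substitution yields only the multiplicative recursion $L_{m+1} \leq 4 L_m$, giving exponential growth --- far too weak for the desired polynomial $O(m^2)$ bound. To remedy this, I would exploit the fact that $B$ is contained in $\sigma^2(A)$, differing only by constraints at the two new boundary positions. Using the conjugation identity Lemma \ref{ConjCommIdLem}(i) with a suitable enveloping cylinder $V$ of slightly larger scale, one can write $f_{\sigma^2(A)} = \tau_V^2 f_A \tau_V^{-2}$, and then realise $f_B$ as $f_{\sigma^2(A)}$ composed with a bounded-cost ``boundary correction'' obtained by one more application of the commutator identity against a short cylinder. If $\tau_V$ can be built with $|\tau_V|_S = O(m)$ (via an inner induction on the size of $V$, running in parallel with the main induction so that $\tau$-elements at each scale are available cheaply), substituting this expression for $f_B^{\pm 1}$ into $f_B f_A^{-1} f_B^{-1} f_A$ and performing the $\tau_V^{2}\tau_V^{-2}$ cancellations produces the additive recursion $L_{m+1} \leq L_m + O(m)$, which telescopes to $L_m = O(m^2)$.

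The principal technical obstacle is the sharp control of $|\tau_V|_S$: the enveloping cylinders must be chosen consistently across scales, so that shifting overhead does not itself explode, and the inner induction on $V$ must be interleaved compatibly with the main induction on $W$. Once $f_W$ is controlled, the bounds on $f_{\sigma(W)}$ and $f_{\sigma^{-1}(W)}$ follow by one further conjugation by $\tau_V$, each contributing an additional $O(m)$ to the word length.
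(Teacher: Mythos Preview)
Your diagnosis of the difficulty is correct: the unit-step commutator gives only $L_{m+1}\leq 4L_m$, which is far too weak. But your proposed fix does not close the gap.

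First, the containment $B\subseteq\sigma^2(A)$ is false. With your choice of words, $\sigma^2(A)$ constrains positions $-m-2,\ldots,m-2$ while $B$ constrains positions $-m,\ldots,m$; they agree on the overlap, but each imposes two conditions the other does not, so neither contains the other. Thus $f_B$ is not obtained from $f_{\sigma^2(A)}$ by a ``boundary correction'' at bounded cost: producing $f_B$ from $f_{\sigma^2(A)}$ would itself require a commutator against an $m$-scale element. Second, even granting $f_B=\tau^2 f_A\tau^{-2}$, substituting into $f_B f_A^{-1} f_B^{-1} f_A$ yields
\[
\tau^2 f_A \tau^{-2} f_A^{-1} \tau^2 f_A^{-1} \tau^{-2} f_A,
\]
which still contains four occurrences of $f_A^{\pm 1}$; no $\tau^2\tau^{-2}$ cancellation removes any of them. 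So you are back to $L_{m+1}\leq 4L_m+O(m)$, which is exponential.

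The paper avoids this by a \emph{dyadic} rather than unit-step induction. One writes $W=\sigma(U)\cap\sigma^{-1}(V)$ where $U$ and $V$ are (shifts of) cylinders of roughly \emph{half} the width of $W$, overlapping only in a fixed base-scale window of width $\approx C_0$. The commutator identity gives $|f_W|_S\leq 2(|f_U|_S+|f_V|_S)$. Crucially, $U$ and $V$ are then recentred by conjugating $O(m)$ times by $\tau$-elements built from $C_0$-cylinders, each of \emph{constant} length $\leq 2l_0$ coming from the base case --- no parallel inner induction on $\tau$ is needed. This yields $l_n\leq 4l_{n-1}+O(2^n)$ with $m_n\approx C_0 2^n$, and solving gives $l_n=O(4^n)=O(m_n^2)$. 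The factor $4$ is harmless precisely because there are only $O(\log m)$ levels, not $O(m)$; this halving is the idea your scheme is missing.
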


\begin{proof}
Write $C_0 = \lceil (R_{\mathbf{x}}(4)-1)/2 \rceil+1$, 
and define $m_n = C_0 (2^n + 1)$, so that $m_{n+1} = 2 m_n - C_0$. 
We inductively construct a nondecreasing sequence $(l_n)$ 
of positive integers, 
such that, if $W \subseteq X$ is an $m$-cylinder of $X$, 
with $C_0 \leq m \leq m_n$, 
then $f_{\sigma ^{-1}(W)} , f_W , f_{\sigma(W)} \in B_S (l_n)$. 
We then analyze the growth of $l_n$. 
Since there are finitely many $m$-cylinders $W$ of $X$ 
with $C_0 \leq m \leq m_0 = 2C_0$, and for each, 
$f_W \in \lBrack \sigma \rBrack^{\prime}$ by Lemma \ref{fUinCommSubgrpLem}, 
there is a constant $l_0$ such that $f_{\sigma ^{-1}(W)} , f_W , f_{\sigma(W)} \in B_S (l_0)$ for all such $W$. 
For $n \geq 1$, suppose $m_{n-1} < m \leq m_n$ and let: 
\begin{equation*}
W = \lAngle w_{-m},\ldots,w_{-1},\underline{w_0},w_1,\ldots,w_m \rAngle \cap X
\end{equation*}
be an $m$-cylinder of $X$. Set $C = C_0$ or $C_0-1$, 
such that $m + C$ is even, and let: 
\begin{align*}
U & = \lAngle w_{-m},\ldots,w_{-2},\underline{w_{-1}},w_0,\ldots,w_{C} \rAngle \\
V & = \lAngle w_{-C},\ldots,w_0,\underline{w_1},w_2,\ldots,w_m \rAngle
\end{align*}
so that $\sigma (U) \cap \sigma^{-1} (V) = W$, and:
\begin{center}
$\sigma (U) \cup \sigma^{-1} (V) 
\subseteq \lAngle w_{-C},\ldots,w_{-1},\underline{w_0},w_1,\ldots,w_C \rAngle$, 
\end{center}
so by Lemma \ref{disjointcyllem}, 
$U$ and $V$ satisfy the conditions of Lemma \ref{ConjCommIdLem} (ii), and: 
\begin{equation} \label{CommLengthBound}
\lvert f_W \rvert_S \leq 2 (\lvert f_U \rvert_S + \lvert f_V \rvert_S)\text{.}
\end{equation}
Now, $\sigma^{-(C+m-2)/2}(U)$ and $\sigma^{(C+m-2)/2}(V)$ are 
$(m+C)/2$-cylinders of $X$, and $(m+C)/2 \leq m_{n-1}$, so by induction, 
\begin{equation} \label{ShiftLengthBound}
\lvert f_{\sigma^{-(C+m-2)/2}(U)}\rvert_S,
\lvert f_{\sigma^{(C+m-2)/2}(V)}\rvert_S \leq l_{n-1}\text{.}
\end{equation}
Let $U_i ^{\prime}$ be the $C_0$-cylinder of $X$ containing 
$\sigma^{-i} (U)$, for $1 \leq i \leq (C+m-2)/2$. 
Then by Lemma \ref{ConjCommIdLem} (i), $f_U$ can be obtained 
from $f_{\sigma^{-(C+m-2)/2}(U)}$ by conjugating by 
$\tau_{U_{(C+m-2)/2} ^{\prime}} , \ldots , \tau_{U_1 ^{\prime}}$ 
in sequence. 
By our base case, $\lvert \tau_{U_i ^{\prime}} \rvert_S \leq 2 l_0$ 
for all $i$, so
\begin{equation*}
\lvert f_U\rvert_S \leq \lvert f_{\sigma^{-(C+m-2)/2}(U)}\rvert_S 
+ 2l_0 (C+m-2)
\end{equation*}
and arguing similarly for $V$, 
\begin{equation*}
\lvert f_V \rvert_S \leq \lvert f_{\sigma^{(C+m-2)/2}(V)}\rvert_S
+ 2l_0 (C+m-2)\text{.}
\end{equation*}
Combining with (\ref{CommLengthBound}), (\ref{ShiftLengthBound}), 
and our bound on $m_n$, we have: 
\begin{equation} \label{FinalLengthBound}
\lvert f_W \rvert_S \leq 4 l_{n-1} + C^{\prime} 2^n + C^{\prime\prime}
\end{equation}
for some constants $C^{\prime},C^{\prime\prime} > 0$. 
Finally, applying Lemma \ref{ConjCommIdLem} (i) a final time, 
we conjugate $f_W$ by $\tau_{W^{\prime}}$ or $\tau_{W^{\prime}} ^{-1}$ 
where $W^{\prime}$ is the $C_0$-cylinder of $X$ containing $W$, 
and $\lvert f_{\sigma(W)} \rvert_S , \lvert f_{\sigma^{-1}(W)} \rvert_S$ 
also satisfy a bound as in (\ref{FinalLengthBound}) 
(for some larger $C^{\prime\prime}$). 
We may therefore take $l_n = 4 l_{n-1} + C^{\prime} 2^n + C^{\prime\prime}$, 
so that $f_{\sigma ^{-1}(W)} , f_W , f_{\sigma(W)} \in B_S (l_n)$. 
Solving the recurrence for $l_n$ we obtain 
$l_n \leq C_2 ^{\prime} 4^n \leq (4 C_2 ^{\prime} / C_0 ^2) m_{n-1} ^2 \leq (4 C_2 ^{\prime} / C_0 ^2) m^2$ for some $C_2 ^{\prime} > 0$. 
\end{proof}

Recall that for $g \in \Homeo (X)$, the \emph{support} of $g$ is: 
\begin{center}
$\supp(g) = \lbrace x \in X : g(x)\neq x \rbrace$. 
\end{center}

\begin{propn} \label{keyLBprop}
Let $m$ and $\DCyl_X(m)$ be as in Lemma \ref{disjoint5seqlem}. 
There exists $C_3 = C_3(X,S)>0$ such that, for all $U \in \DCyl_X(m)$, 
there is a subgroup $\Delta_U \leq \lBrack\sigma\rBrack^{\prime}$ satisfying: 
\begin{itemize}
\item[(i)] $\Delta_U \cong \Alt(5)$; 
\item[(ii)] $\Delta_U \subseteq B_S (C_3 m^2)$; 
\item[(iii)] For all $g \in \Delta_U$, 
\begin{equation} \label{Alt5SuppEqn}
\supp(g) \subseteq \bigcup_{i=-2} ^2 \sigma^i (U)\text{.}
\end{equation}
\end{itemize}
\end{propn}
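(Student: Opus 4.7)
The plan is to take $\Delta_U = \langle f_{\sigma^{-1}(U)}, f_U, f_{\sigma(U)}\rangle$. By Lemma \ref{fUinCommSubgrpLem}, each generator lies in $\lBrack\sigma\rBrack^{\prime}$. Since $\supp(f_W)\subseteq \sigma^{-1}(W)\cup W\cup \sigma(W)$ directly from the definition of $f_W$, each of the three generators has support contained in $\bigcup_{i=-2}^{2}\sigma^i(U)$, and this inclusion is preserved under taking products, which yields (iii).

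To prove (i), I would identify $\bigsqcup_{i=-2}^{2}\sigma^i(U)$ with $U\times\{-2,-1,0,1,2\}$ via the bijection $(y,i)\leftrightarrow\sigma^i(y)$ (well-defined because the five sets $\sigma^i(U)$ are pairwise disjoint, by the hypothesis $U\in\DCyl_X(m)$). A direct unpacking of the definition of $f_W$ shows that, under this identification, $f_{\sigma^{-1}(U)}$, $f_U$, $f_{\sigma(U)}$ act as the $3$-cycles $(-2,-1,0)$, $(-1,0,1)$, $(0,1,2)$ respectively on the second coordinate, while leaving the first coordinate untouched. The product of the first and third of these 3-cycles is the $5$-cycle $(-2,-1,0,1,2)$, so the subgroup of $\Sym(\{-2,\ldots,2\})$ they generate has order divisible by $15$; since $\Alt(5)$ contains no proper subgroup of order $15$ or $30$, this subgroup equals $\Alt(5)$. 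The induced map $\Delta_U\to\Sym(5)$ therefore has image $\Alt(5)$, and it is injective because, by (iii), any element of $\Delta_U$ is trivial outside $\bigcup_i\sigma^i(U)$ and, on this union, fixes the first coordinate (a property preserved under composition), so it is determined by its image in $\Sym(5)$. Hence $\Delta_U\cong\Alt(5)$.

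For (ii), Proposition \ref{gen3-cycprop} gives $f_{\sigma^{-1}(U)},f_U,f_{\sigma(U)}\in B_S(C_2 m^2)$ for some $C_2=C_2(X,S)$. Since $\Alt(5)$ is a finite group of order $60$, its diameter in any generating tuple is at most $60$, so every element of $\Delta_U$ is a word of length at most $60$ in these three generators and their inverses. Setting $C_3 = 60 C_2$ then gives $\Delta_U\subseteq B_S(C_3 m^2)$.

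None of the steps presents a serious obstacle: the quadratic length bound is entirely absorbed into Proposition \ref{gen3-cycprop}, the geometry of the supports comes for free from the definition of $f_W$, and the rest rests on the elementary fact that three consecutive overlapping $3$-cycles generate $\Alt(5)$. The only point requiring actual computation is checking the cycle pattern induced by the three generators on the second coordinate, which is routine.
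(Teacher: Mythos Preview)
Your proposal is correct and matches the paper's proof essentially line for line: the paper also sets $\Delta_U = \langle f_{\sigma^{-1}(U)}, f_U, f_{\sigma(U)}\rangle$, deduces (iii) from the supports of the generators, identifies the action on $\{\sigma^i(U):|i|\leq 2\}$ with the three $3$-cycles $(-2\,-1\,0)$, $(-1\,0\,1)$, $(0\,1\,2)$ to get (i), and takes $C_3 = 60 C_2$ for (ii). Your write-up is slightly more detailed than the paper's (you spell out the injectivity of the map to $\Sym(5)$ and why the three $3$-cycles generate $\Alt(5)$), but the argument is the same.
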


\begin{proof}
For $U \in \DCyl_X (m)$, let 
$\Delta_U = \langle f_{\sigma^{-1}(U)},f_U,f_{\sigma(U)} \rangle$. 
Then (iii) holds, since it holds 
for $g \in \lbrace f_{\sigma^{-1}(U)},f_U,f_{\sigma(U)} \rbrace$. 
Now $\Delta_U$ acts, faithfully, on 
$\lbrace\sigma^i(U):\lvert i\rvert\leq 2\rbrace$. 
Identifying this set in the obvious way with $\lbrace -2,-1,0,1,2 \rbrace$,  
$f_{\sigma^{-1}(U)}$,$f_U$ and $f_{\sigma(U)}$ act, respectively, 
as the $3$-cycles $(-2 \: -1 \: 0)$, $(-1 \: 0 \: 1)$ and $(0 \: 1 \: 2)$. 
As such, $\Delta_U$ acts as the alternating group 
on $\lbrace -2,-1,0,1,2\rbrace$, 
and we have (i). 

By Proposition \ref{gen3-cycprop}, 
$f_{\sigma^{-1}(U)},f_U,f_{\sigma(U)} \in B_S (C_2 m^2)$, 
and since $\lvert \Delta_U \rvert = 60$, 
$\Delta_U \subseteq B_S (60 C_2 m^2)$, whence (ii). 
\end{proof}

\begin{proof}[Proof of Theorem \ref{LBmainthm}]
Let $Q$ be a finite group and $\pi : B_S (r) \rightarrow Q$ 
be a local embedding. 
For any $m \leq (r/2C_3)^{1/2}$, we have $\Delta_U \leq B_S(r/2)$, 
where $C_3 > 0$ and $\Delta_U$ are as in Proposition \ref{keyLBprop}. 
We apply Proposition \ref{dirprodLEFprop} to the family 
$\lbrace \Delta_U : U \in \DCyl(m) \rbrace$ for $m \geq c' r^{1/2}$ 
($c'>0$ sufficiently small), and $r$ larger than a constant 
such that $m \geq (R_{\mathbf{x}}(4)-1)/2$. 
Since the $\Delta_U$ are disjointly supported 
(by Lemma \ref{disjoint5seqlem} and (\ref{Alt5SuppEqn})), 
they do indeed generate their direct product. 
From Proposition \ref{dirprodLEFprop} we conclude: 
\begin{align*}
\lvert Q \rvert & \geq \prod_{U \in \DCyl_X(m)} \lvert \Delta_U \rvert & \\
& \geq \exp (c p_{\mathbf{x}} (2m-7))&  \text{ (by (\ref{DCylbig}))} \\
& \geq \exp (c p_{\mathbf{x}} (2c' r^{1/2}-7))
\end{align*}
for $c = \log(60)/9$. 
\end{proof}

\begin{rmrk}
\normalfont
We can improve the constant $c$ by modifying the construction so as to take 
$\Delta_U \cong \Alt(2d+1)$ for large $d$, instead of $\Alt(5)$. 
To do this we would need to take $\DCyl_X (m)$ to be a family 
of $m$-cylinders $U$ such that all sets $\sigma^i (U)$ 
are pairwise disjoint for $-d \leq i \leq d$, 
so that our construction in Lemma \ref{disjoint5seqlem} 
would lead to a bound $\lvert \DCyl_X (m) \rvert \geq p_X (2m-C)/(2d-1)$. 
We could nevertheless take $c = \log (d!/2)/(2d-1)$, 
which grows in $d$. 
\end{rmrk}

\begin{rmrk}
\normalfont
The same argument also gives a lower bound on the LEF action growth of 
$\lBrack \sigma \rBrack^{\prime}$, 
which is a little stronger than that obtained 
by applying Remark \ref{LEFLAGrthIneqRmrk} 
to the conclusion of Theorem \ref{LBmainthm}. 
Suppose $\pi : B_S(r) \rightarrow \Sym (d)$ is a local embedding. 
Then by Propositions \ref{dirprodLEFprop} and \ref{keyLBprop}, 
$\im(\pi)$ contains a subgroup isomorphic to 
the direct product of $P = p_{\mathbf{x}} (2c r^{1/2}-7)$ copies of $\Alt(5)$, 
which in turn contains the direct product of 
$P$ copies of $C_5$. 
By \cite[Theorem 2]{Johnson}, the minimal degree 
of a faithful permutation representation of the latter is $5 P$. 
Hence $\mathcal{LA}_{\lBrack \sigma \rBrack^{\prime}} (n) 
\succeq p_{\mathbf{x}} (n^{1/2})$. 
\end{rmrk}

\section{Systems of intermediate growth} \label{IntSection}

In this Section we prove Theorem \ref{IntGrowthShiftThm}, 
and deduce Theorems \ref{IntGrowthmainthm} and \ref{UnctbleThm}.
The following example is modelled on the construction in \cite[\S 3]{JLP16}. 
In this Section, ``large'' means larger than a certain absolute constant 
(which we do not compute), 
so as to make true some needed inequalities. 

Choose a real number $r\geq 2$. 
First, fix a large $x$ divisible by $3$. 
We work over the alphabet $A = \lbrace a,b\rbrace$. 
Fix two words $w^{(0)},w'^{(0)} \in A^{\ast}$ of length $x/3$. 
Among all words $w^{(0)}vw'^{(0)}$ for $|v|=x/3$, 
take a subset $\mathcal{C}_{0}$  with $|\mathcal{C}_{0}|=x$ 
and such that no factor of any element of $\mathcal{C}_{0}$ 
is equal to $w^{(0)},w'^{(0)}$ 
except for the prefixes and suffixes themselves. 
This is easily achieved: 
for instance taking $w^{(0)} = a^{x/3}$, 
$w'^{(0)} = b^{x/3}$, we can form $\mathcal{C}_{0}$ 
by choosing $v$ from among those words starting in $b$ and 
ending in $a$ (of which there are $2^{(x/3)-2} \geq x$ for $x$ large). 

Assuming that we have already defined $\mathcal{C}_{j}$, 
we are going to define $\mathcal{C}_{j+1}$; 
for all $i$, we set $N_{i}=|\mathcal{C}_{i}|$ 
and $l_{i}$ the length of any element of $\mathcal{C}_{i}$, 
so that $N_{0}=l_{0}=x$. We prove the following for all $j\geq 0$:
\begin{itemize}
\item[(i)] $N_{j},l_{j}$ are large and increasing in $j$; 
$3|N_{j}$, $3|l_{j}$; 
all words in $\mathcal{C}_{j}$ have the same prefix of length $l_j / 3$, 
and the same suffix of length $l_j / 3$;
\item[(ii)] $N_{j+2}\leq \exp\big(2^{r}(\log N_{j})^{r}\big)$, 
$N_{j+1}\geq  N_{j}^{2}$, 
and for $j$ even $N_{j+1}\geq \exp\big(\frac{1}{2}(\log N_{j})^{r}\big)$;
\item[(iii)] $N_{j}<l_{j+1}\leq N_{j}^{2}$.
\end{itemize}
We prove (i) and (iii) by induction with base case $j=0$, for which all claims are true (for $l_{1}$ see below), and (ii) directly.

Fix an ordering of $\mathcal{C}_{j}$, arbitrarily: the elements of $\mathcal{C}_{j}$ are words $u^{(j)}_{i}$, where the index $i$ follows the ordering. 
Define the word: 
\begin{equation*}
u^{(j+1)}=u^{(j)}_{1}u^{(j)}_{2}u^{(j)}_{3}\ldots u^{(j)}_{N_{j}-1}u^{(j)}_{N_{j}}=w^{(j+1)}u^{(j)}_{\frac{1}{3}N_{j}+1}u^{(j)}_{\frac{1}{3}N_{j}+2}\ldots u^{(j)}_{\frac{2}{3}N_{j}}w'^{(j+1)},
\end{equation*}
where $w^{(j+1)}$ and $w'^{(j+1)}$ collect the first and last third of the $u^{(j)}_{i}$, respectively: this is possible since $3|N_{j}$ by induction, and it implies $3|l_{j+1}$ where $l_{j+1}=|u^{(j+1)}|=l_{j}N_{j}$ is large. 
Take a collection $P_{j} \subset \mathrm{Sym}\left(N_{j}/3\right)$: 
if $j$ is even, choose $|P_{j}|=3\left\lfloor\exp\big((\log N_{j})^{r}\big)/3\right\rfloor$, otherwise choose $|P_{j}|=N_{j}^{2}$ 
(note that this is possible for $x$ large). 
Then, define $\mathcal{C}_{j+1}$ to be the set of all words 
$\lbrace w^{(j+1)}v_{\pi} ^{(j+1)}w'^{(j+1)} : \pi \in P_j \rbrace$ 
where for $\pi \in P_j$, $v_{\pi} ^{(j+1)}$ is obtained by permuting the factors  
$u^{(j)}_{i}$ of $u^{(j+1)}$ 
with $i\in\left(N_{j} / 3,2N_{j} / 3\right]$ 
according to $\pi$; that is: 
\begin{equation*} 
v_{\pi} ^{(j+1)} = u^{(j)}_{\frac{1}{3}N_{j}+\pi(1)}u^{(j)}_{\frac{1}{3}N_{j}+\pi(2)}\ldots u^{(j)}_{\frac{1}{3}N_{j} + \pi (\frac{1}{3}N_{j})}
\end{equation*}
By definition we have $3|N_{j+1}$ and $N_{j+1}$ large; 
$l_{j+1} > l_j$, $N_{j+1} > N_j$, and for all $j\geq 0$,
\begin{align*}
 & N_{j+2}\leq\begin{cases} \exp\big(2(\log N_{j})^{r}\big) & (2|j) \\ 
\exp\big((\log N_{j}^{2})^{r}\big) & (2\nmid j) \end{cases}
 \leq \exp\big(2^{r}(\log N_{j})^{r}\big); \\
 & N_{j+1}\geq\min\big\{N_{j}^{2},\exp\big((\log N_{j})^{r}\big)-3\big\}
 =N_{j}^{2}; \\
2|j \ \Rightarrow \ & N_{j+1}
\geq \exp((\log N_{j})^{r}\big)-3
\geq \exp\big((\log N_{j})^{r} / 2\big); \\
 & l_{1}=x^{2}=N_{0}^{2}>N_{0}; \\
j\geq 1 \ \Rightarrow \ & N_{j}<l_{j+1}=l_{j}N_{j}\leq N_{j-1}^{2}N_{j}\leq N_{j}^{2}, 
\end{align*}
so (i)-(iii) do indeed hold. 
Two key features of this construction are that, for all $j$: 
\begin{itemize}
\item[(a)] All words in $\mathcal{C}_j$ 
have the same prefix $w^{(j)}$ and suffix $w'^{(j)}$ 
of length $\frac{1}{3}l_{j}$; 
\item[(b)] Every word in $\mathcal{C}_{j+1}$ 
is the product of all the words from $\mathcal{C}_j$ (in some order).
\end{itemize}

We now construct a uniformly recurrent non-periodic word 
$\mathbf{x} = \mathbf{x}(r)$, 
such that the subshift $X_r = \overline{\mathcal{O}(\mathbf{x})}$ 
satisfies the conditions of Theorem \ref{IntGrowthShiftThm}. 
Consider the words $x^{(j)} = u_{N_j / 3} ^{(j)}$. 
Then $x^{(j)}$ is the $K_j = (l_j (N_j / 3 - 1) + 1)$th $l_j$-factor of 
$x^{(j+1)}$, for all $j$, and $2 \leq K_j \leq l_{j+1} - l_j$. 
For any $1 \leq M_0 \leq l_0$, apply Lemma \ref{LimitWordLem} 
to the sequence $x^{(j)}$ (with $L_j = l_j$) 
to obtain $\mathbf{x}$. 


\begin{propn} \label{IntCplxLB}
The complexity function $p_{\mathbf{x}}$ of $\mathbf{x}$ satisfies 
$p_{\mathbf{x}}(l_j) \geq \exp\big( (\log l_j)^{r}/2^{r+1}\big)$, 
for all $j \in \mathbb{N}$ odd. 
\end{propn}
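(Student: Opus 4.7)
The plan is to exhibit $N_j$ distinct $l_j$-factors of $\mathbf{x}$ and then convert the resulting lower bound $p_{\mathbf{x}}(l_j) \geq N_j$ into the stated estimate by invoking the odd-step inequality from the construction.

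First, I would show that $p_{\mathbf{x}}(l_j) \geq N_j$ holds for every $j$. By Lemma \ref{LimitWordLem}, $F_{l_j}(\mathbf{x}) \supseteq F_{l_j}(x^{(j+1)})$, so it suffices to find $N_j$ distinct $l_j$-factors inside $x^{(j+1)}$. Since $x^{(j+1)} \in \mathcal{C}_{j+1}$, it is a concatenation $w^{(j+1)} v_\pi^{(j+1)} w'^{(j+1)}$ of $N_j$ consecutive blocks of length $l_j$: by the construction, the prefix $w^{(j+1)}$ uses the indices $1, \ldots, N_j/3$ of $\mathcal{C}_j$, the middle $v_\pi^{(j+1)}$ uses the indices $N_j/3+1, \ldots, 2N_j/3$ (in a permuted order dictated by some $\pi \in P_j$), and the suffix $w'^{(j+1)}$ uses the indices $2N_j/3+1, \ldots, N_j$. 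Hence each of the $N_j$ (pairwise distinct) elements of $\mathcal{C}_j$ appears exactly once as an aligned $l_j$-factor of $x^{(j+1)}$ at a position divisible by $l_j$, which is the claim.

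Next, for $j$ odd, $j-1$ is even, so the inequality established for the even case in the construction gives
\[
N_j = |P_{j-1}| \geq \exp\big((\log N_{j-1})^r\big) - 3 \geq \exp\big((\log N_{j-1})^r / 2\big).
\]
Property (iii), applied with $j-1$ in place of $j$, yields $l_j \leq N_{j-1}^2$, i.e., $\log N_{j-1} \geq (\log l_j)/2$. Combining everything,
\[
p_{\mathbf{x}}(l_j) \geq N_j \geq \exp\big((\log N_{j-1})^r / 2\big) \geq \exp\big((\log l_j)^r / 2^{r+1}\big),
\]
which is the required bound. The only potentially delicate point is verifying that every element of $\mathcal{C}_j$ really does appear exactly once as an aligned $l_j$-factor of $x^{(j+1)}$; this is a direct bookkeeping exercise using the block structure of $\mathcal{C}_{j+1}$, but it is the crucial combinatorial step, since without the distinctness of the $N_j$ blocks appearing in $x^{(j+1)}$ we would only obtain $p_{\mathbf{x}}(l_j) \geq N_j/3$ or worse.
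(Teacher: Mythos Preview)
Your proof is correct and follows essentially the same approach as the paper: first establish $p_{\mathbf{x}}(l_j) \geq N_j$ by noting that some element of $\mathcal{C}_{j+1}$ is a factor of $\mathbf{x}$ and contains every element of $\mathcal{C}_j$ as an (aligned) $l_j$-factor, then convert this into the stated bound using the construction's inequalities (ii) and (iii). The only cosmetic difference is that the paper cites property (b) directly rather than spelling out the block decomposition of $x^{(j+1)}$, and invokes (ii) rather than re-deriving $N_j \geq \exp\big((\log N_{j-1})^r/2\big)$.
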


\begin{proof}
Since $u_{N_j / 3} ^{(j+1)} \in \mathcal{C}_{j+1}$ 
is a factor of $\mathbf{x}$, 
we have by (b) above that all elements of $\mathcal{C}_j$ 
are distinct factors of $\mathbf{x}$ of length $l_j$, 
so $p_{\mathbf{x}}(l_j)\geq N_{j}$ for all $j \in \mathbb{N}$. 
For $j$ odd, it follows that: 
\begin{align*}
p_{\mathbf{x}}(l_j) & \geq N_{j}
\geq \exp\big((\log N_{j-1})^{r} / 2\big)
\geq \exp\big((\log\sqrt{l_{j}})^{r} / 2\big)
= \exp\big( (\log l_j)^{r}/2^{r+1}\big)
\end{align*}
by (ii) and (iii) above. 
\end{proof}

\begin{propn} \label{IntRecUB}
The recurrence function $R_{\mathbf{x}}$ of $\mathbf{x}$ satisfies: 
\begin{equation*}
R_{\mathbf{x}}(n)\leq C \exp \left( 4^{r+1}(\log n)^{r}\right)
\end{equation*}
for some $C>0$ and all $n \in \mathbb{N}$.
\end{propn}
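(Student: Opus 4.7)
The plan is to exploit the fact from (i) that every word in $\mathcal{C}_j$ begins with the same prefix $w^{(j)}$ and ends with the same suffix $w'^{(j)}$, each of length $l_j/3$. The crucial observation is that for $n \leq l_j/3$, every $n$-factor of $\mathbf{x}$ spanning a level-$j$ boundary is confined to the concatenation $w'^{(j)} w^{(j)}$, and so depends only on its position relative to that boundary, not on which pair of level-$j$ blocks happens to meet there. This lets us avoid a naive counting which would need the recurrence window to witness every distinct pair of adjacent level-$j$ blocks in $\mathbf{x}$ — such an argument would cost an extra factor of $N_{j+1}$ and yield only $\exp(O((\log n)^{r^2}))$.

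Granting the observation, I would establish the intermediate bound $R_{\mathbf{x}}(n) \leq 2 l_{j+1}$ whenever $n \leq l_j/3$. A window of length $2 l_{j+1}$ in $\mathbf{x}$ contains some complete $U \in \mathcal{C}_{j+1}$, and $U$ is by construction a concatenation of every word of $\mathcal{C}_j$ exactly once, so it contains every $n$-factor that lies inside a single level-$j$ block. Moreover $U$ has at least $N_j - 1$ internal level-$j$ boundaries each flanked by a full level-$j$ block on either side, so the word $w'^{(j)} w^{(j)}$ appears inside $U$ around every such boundary, and by the observation this covers all remaining $n$-factors.

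Finally I would take $j$ minimal with $l_j \geq 3n$, so that $l_{j-1} < 3n$, and bound $l_{j+1} = l_{j-1} N_{j-1} N_j$. Using (iii) in the form $N_k < l_{k+1}$ gives $\log N_{j-3}, \log N_{j-2} < \log(3n)$, and then (ii) applied to $k = j-3$ and $k = j-2$ yields $\log N_{j-1}, \log N_j \leq 2^r (\log(3n))^r$. Summing, $\log l_{j+1} \leq \log(3n) + 2^{r+1} (\log(3n))^r$, and for $n$ sufficiently large this is at most $4^{r+1} (\log n)^r$, so $R_{\mathbf{x}}(n) \leq 2 l_{j+1} \leq C \exp(4^{r+1} (\log n)^r)$ after absorbing the finitely many small $n$ (including the cases $j \leq 2$ where $j-3$ would be negative) into $C$. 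The main obstacle is the structural observation of the first paragraph; everything else is elementary arithmetic on the recursion.
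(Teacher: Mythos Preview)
Your proof is correct and follows essentially the same approach as the paper's: both exploit the common prefix/suffix structure (a) to reduce every short factor to one appearing inside a single level-$(j+1)$ block window, then bound $l_{j+1}$ in terms of $n$ via the recursions (ii)--(iii). The only differences are cosmetic: you use the threshold $n\leq l_j/3$ (so boundary-spanning factors lie entirely in $w'^{(j)}w^{(j)}$ and one level-$(j{+}1)$ block suffices, window $2l_{j+1}$), whereas the paper uses $n<2l_j/3$ (so a factor lies in $u_i^{(j)}w^{(j)}$ or $w'^{(j)}u_i^{(j)}$, and two consecutive level-$(j{+}1)$ blocks are needed, window $3l_{j+1}$); correspondingly your arithmetic goes back to $N_{j-3},N_{j-2}$ while the paper uses $l_{j+1}\leq N_j^2$ and goes back only to $N_{j-2}$.
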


\begin{proof}
Fix any factor $w$ of $\mathbf{x}$ of length $n$, 
and suppose that $j$ is such that $n< 2 l_{j} / 3$. 
$w$ is a factor of some $x^{(k)} = u_{N_k / 3} ^{(k)}$, 
for $k > j$, and applying (b) above repeatedly, 
$x^{(k)}$ is expressible as a product of words $u_i ^{(j)}$. 
Therefore, there are indices $i_1$ and $i_2$ such that 
$w$ is a factor of some $u^{(j)}_{i_1}u^{(j)}_{i_2}$; 
moreover it intersects the middle third of at most one of 
$u^{(j)}_{i_1}$ or $u^{(j)}_{i_2}$, so by (a) above, 
it sits entirely inside either $u^{(j)}_{i_1}w^{(j)}$ or $w'^{(j)}u^{(j)}_{i_2}$. 

Now let $v$ be any factor $\mathbf{x}$ of length $3 l_{j+1}$. 
As before, $v$ is a factor of some $x^{(k')}=u_{N_{k'} / 3} ^{(k')}$, 
for $k' > j+1$, and applying (b) above repeatedly, 
$x^{(k')}$ is expressible as a product of words $u_i ^{(j+1)}$. 
Therefore, there are indices $i' _1$ and $i' _2$ such that 
$u_{i' _1} ^{(j+1)} u_{i' _2} ^{(j+1)}$ is a factor of $v$. 
By (b) above, and the fact that every $u_i ^{(j)}$ 
has $w^{(j)}$ as a prefix and $w'^{(j)}$ as a suffix, 
it follows that $u^{(j)}_{i_1}w^{(j)}$ and $w'^{(j)}u^{(j)}_{i_2}$ 
are factors of $u_{i' _1} ^{(j+1)} u_{i' _2} ^{(j+1)}$, 
hence $w$ is a factor of $v$. 
That is, $R_{\mathbf{x}}(n)\leq 3 l_{j+1}$. 

Partitioning the integers, we have $R_{\mathbf{x}}(n)\leq 3l_{j+1}$ for any 
$n\in\left[2l_{j-1} /3,2l_{j}/3 \right)$. For $j\geq 2$, we have: 
\begin{align*}
\log R_{\mathbf{x}}(n) - \log 3 & \leq \log l_{j+1} \leq 2 \log N_j \leq 2^{r+1}(\log N_{j-2})^{r} \\
& < 2^{r+1}(\log l_{j-1})^{r}  \leq 2^{r+1}(\log \big(3n/2)\big)^{r}  \leq 4^{r+1} (\log n)^r
\end{align*}
by (ii) and (iii), and our bounds on $n$, 
as required.
\end{proof}

\begin{proof}[Proof of Theorem \ref{IntGrowthShiftThm}]
Let $\mathbf{x} = \mathbf{x}(r)$ be as above. 
Set $X_r = \overline{\mathcal{O}(\mathbf{x})}$. 
By Remark \ref{minseqindeprmrk}, 
items (i) and (ii) follow from Propositions \ref{IntRecUB} and \ref{IntCplxLB}, 
respectively, with $n_i ^{(r)} = l_{2i+1}$. 
\end{proof}

\begin{proof}[Proof of Theorem \ref{IntGrowthmainthm}]
Recall that $N! \leq \exp (N \log N)$ for all $N$. 
Applying Theorem \ref{UBmainthm} to 
$\Gamma^{(r)} = \lBrack \sigma \rBrack^{\prime}$, 
where $(X_r,\sigma)$ is as in Theorem \ref{IntGrowthShiftThm}, we have: 
\begin{equation} \label{IntGrowthUB1}
\mathcal{L}_{\Gamma^{(r)}} ^S (n) 
\leq \exp \Big( 2 R_X(Cn)\log \big( 2R_X(Cn)\big) \Big)
\end{equation}
for all $n$ and for some $C>0$, 
where $X=X_r$, 
so that $R_X (n) \leq \exp \big( C_r (\log n)^r \big)$.
Hence for $n\geq 2$: 
\begin{equation*}
\log \big( 2R_X(Cn) \big) 
\leq C_r (\log n + \log C)^r + \log 2 \leq C_r ' (\log n)^r
\end{equation*}
for a possibly larger constant $C_r '$, 
so that: 
\begin{align*}
\log \big( 2R_X(Cn)\big) + \log \log \big( 2R_X(Cn)\big) 
& \leq C_r ^{\prime} (\log n)^r + r \log \log n + \log C_r ' \\
& \leq C_r '' (\log n)^r
\end{align*}
again, for $C_r ''$ a possibly larger constant. Thus, by (\ref{IntGrowthUB1}), 
$\mathcal{L}_{\Gamma^{(r)}} ^S (n) \leq \exp (\exp (C_r '' (\log n)^r))$, 
and we have (i). For (ii), suppose for a contradiction that 
$2 \leq r' < r$ and that 
$C , C' > 0$ are such that, for all $n$ sufficiently large, 
\begin{equation} \label{IntGrowthLB1}
\mathcal{L}_{\Gamma^{(r)}} ^S (n) 
\leq \exp \Big( \exp \big( C(\log C'n)^{r^{\prime}} \big) \Big)
\end{equation}
By Theorem \ref{LBmainthm} and Theorem \ref{IntGrowthShiftThm} (ii), we have: 
\begin{equation*}
\mathcal{L}_{\Gamma^{(r)}} ^S (n) 
 \geq \exp \left( c p_X \left( c \sqrt{n_i ^{(r)}}\right) \right)
 \geq \exp \left( c \exp \left( c_r \left( \frac{1}{2}\log n_i ^{(r)} + \log c \right)^r \right) \right)
\end{equation*}
for some $c>0$ and all $i \in \mathbb{N}$. Hence by (\ref{IntGrowthLB1}), 
\begin{center}
$C(\log n_i ^{(r)} + \log C' )^{r^{\prime}} 
\geq c_r \left( \frac{1}{2}\log n_i ^{(r)} + \log c \right)^r + \log c$
\end{center}
for all $i$ sufficiently large, a contradiction. 
\end{proof}

\begin{proof}[Proof of Theorem \ref{UnctbleThm}]
Let $\mathcal{F} = \lbrace \Gamma^{(r)} : r \geq 2 \rbrace$, 
with $\Gamma^{(r)}$ as in Theorem \ref{IntGrowthmainthm}. 
Let $2 \leq r^{\prime} < r$, and suppose for a contradiction that 
$\mathcal{L}_{\Gamma^{(r)}} \preceq \mathcal{L}_{\Gamma^{(r^{\prime})}}$. 
By Theorem \ref{IntGrowthmainthm} (i), 
\begin{center}
$\mathcal{L}_{\Gamma^{(r)}} (n) 
\preceq \exp \Big( \exp \big( C_{r^{\prime}}(\log n)^{r^{\prime}} \big) \Big)$, 
\end{center}
contradicting Theorem \ref{IntGrowthmainthm} (ii). 
\end{proof}




\section*{Acknowledgements}

Both authors were supported by ERC grant no.~648329 ``GRANT'', during their permanence at Georg-August-Universit\"at G\"ottingen. 
The second author was supported by the Emily Erskine Endowment Fund and a postdoctoral fellowship from the Einstein Institute of Mathematics, during his permanence at the Hebrew University of Jerusalem.

\footnotesize
\ \\
\textsc{H.~Bradford}. \ \ Christ's College, 
St Andrew's Street, Cambridge, England, CB2 3BU.\\
\texttt{hb470@cam.ac.uk}\\ \\
\textsc{D.~Dona}. \ \ Einstein Institute of Mathematics, Edmond J. Safra Campus Givat Ram, The Hebrew University of Jerusalem, 9190401 Jerusalem, Israel.\\
\texttt{daniele.dona@mail.huji.ac.il}

\end{document}